\documentclass[11pt, A4, leqno]{amsart}

\usepackage{enumerate}
\usepackage{graphicx}
\usepackage{physics}
\usepackage{hyperref}

\makeatletter
\def\@defaultbiblabelstyle#1{[#1]}
\makeatother

\makeatletter
\@namedef{subjclassname@2020}{\textup{2020} Mathematics Subject Classification}
\makeatother

\numberwithin{equation}{section}

\theoremstyle{plain}
  \newtheorem{theorem}{Theorem}[section]
  \newtheorem{proposition}[theorem]{Proposition}
  \newtheorem{fact}[theorem]{Fact}
  \newtheorem{lemma}[theorem]{Lemma}
  \newtheorem{corollary}[theorem]{Corollary}
\theoremstyle{definition}
  \newtheorem{definition}[theorem]{Definition}
\theoremstyle{remark}
  \newtheorem{remark}[theorem]{Remark}
  \newtheorem*{acknowledgements}{Acknowledgements}
  \newtheorem{example}[theorem]{Example}
\numberwithin{equation}{section}

\renewcommand{\theenumi}{{\rm(\arabic{enumi})}}
\renewcommand{\labelenumi}{\theenumi}

\def\labelenumi{(\theenumi)}
\def\theenumi{\arabic{enumi}}

\DeclareMathOperator\ord{ord}

\renewcommand{\phi}{\varphi}
\renewcommand{\epsilon}{\varepsilon}
\newcommand{\setm}{\,;\,}
\newcommand{\col}{\colon}
\newcommand{\R}{\mathbb{R}}
\newcommand{\I}{\mathbb{I}}
\newcommand{\E}{\mathbb{E}}
\renewcommand{\L}{\mathbb{L}}
\newcommand{\C}{\mathbb{C}}

\newcommand{\D}{\mathbb{D}}
\newcommand{\Z}{\mathbb{Z}}
\newcommand{\inner}[2]{\left\langle{#1},{#2}\right\rangle}

\allowdisplaybreaks

\title[Zero mean curvature surfaces in the isotropic 3-space]{A new framework of zero mean curvature surfaces in the isotropic 3-space}
\author[]{Riku Kishida}
\address[Riku Kishida]{Department of Mathematical and Computing Sciences, Institute of Science Tokyo, Tokyo 152-8552, Japan}
\email{kishida.r.1632@m.isct.ac.jp}

\date{April 24, 2026}
\subjclass[2020]{Primary 53A10; Secondly 53A35, 53B30}
\keywords{isotropic 3-space, zero mean curvature surface, singularities, Gauss map, Osserman-type inequality}

\begin{document}

\begin{abstract}
  We introduce a class of zero mean curvature surfaces with singularities in the isotropic 3-space, called \textit{ZMC-faces}.
  As a main result, we establish three Osserman-type inequalities for a ZMC-face under certain assumptions on both completeness and finiteness of the total curvature.
  The equality conditions of these inequalities are related to the asymptotic behaviors of the ends.
  Moreover, we present several examples of ZMC-faces attaining equalities in these inequalities.
\end{abstract}

\maketitle

\tableofcontents

\section*{Introduction}

The \textit{isotropic $3$-space} $\I^3$ is the 3-dimensional vector space $\R^3$ with the degenerate inner product $\inner{\cdot}{\cdot}$ defined by
\begin{align}\label{eq_degenerate_metric}
  \inner{(t_1,x_1,y_1)}{(t_2,x_2,y_2)}:=x_1x_2+y_1y_2\qquad\bigl((t_1,x_1,y_1),(t_2,x_2,y_2)\in\R^3\bigr).
\end{align}
The isotropic 3-space $\I^3$ has been regarded as an important example of Cayley-Klein geometries, and the surface theory in $\I^3$ was developed by Strubecker~\cite{Strubecker1942}.
A comprehensive account of the geometry of $\I^3$ can be found in Sachs~\cite{Sachs1990}.

Let $\Sigma^2$ be a Riemann surface, and let $f\col\Sigma^2\to\I^3$ be a space-like conformal immersion.
Then, the \textit{mean curvature function} $H$ of $f$ can be defined in a natural way (see Section~\ref{sec_1} in details).
We call $f$ the \textit{zero mean curvature surface} in $\I^3$ if $H$ is identically zero.
Let $\phi$ be a smooth real-valued function defined on the entire plane $\R^2$, and let us consider the map \[f\col\C\ni x+iy\mapsto(\phi(x,y),x,y)\in\I^3\qquad(x,y\in\R)\] as the graph of $\phi$, where $i$ is the imaginary unit.
Then, the mean curvature function of $f$ is written as \[H=\frac{1}{2}(\phi_{xx}+\phi_{yy}).\]
This implies that any zero mean curvature surface in $\I^3$ can be locally viewed as the graph of a harmonic function $\phi$.
Therefore, there exist infinitely many zero mean curvature surfaces in $\I^3$ which can be represented as entire graphs.

The Weierstrass-type representation formula for zero mean curvature surfaces in $\I^3$ is known.
In fact, given a zero mean curvature surface $f\col\Sigma^2\to\I^3$, there exist a holomorphic function $g$ and a holomorphic 1-form $\omega$ on $\Sigma^2$, such that the image of $f$ is congruent to that of
\begin{align}\label{eq_intro_Weierstrass_representation_formula}
  \Re\int(g,1,-i)\omega.
\end{align}
The pair $(g,\omega)$ is called the \textit{Weierstrass data} of $f$.
In particular, $g$ is called the \textit{Gauss map} of $f$.
As a typical example, a \textit{catenoid} in $\I^3$ can be obtained from the Weierstrass data $(g,\omega):=(1/z,dz)$ (see Figure~\ref{fig_catenoid_sing1} (a)).
The expression \eqref{eq_intro_Weierstrass_representation_formula} can be regarded as an intermediate formula between minimal surfaces in the Euclidean 3-space $\E^3$ and maximal surfaces in the Lorentz-Minkowski 3-space $\L^3$.
In light of this observation, there has been much research on extending various properties of minimal surfaces in $\E^3$ or maximal surfaces in $\L^3$ to zero mean curvature surfaces in $\I^3$ (for example, \cite{AF2022b, AF2022a, ALYpreprint, CH2025, CLLY2024, daSilva2021, Kato, Sato2021, SY2021}).

These papers indicate that zero mean curvature surfaces in $\I^3$ may have singularities.
In particular, Sato~\cite{Sato2021} and Kato~\cite{Kato} discuss the Weierstrass-type representation formula for zero mean curvature surfaces with singularities and provide some examples.
For example, we set $(g,\omega):=(1/z,zdz)$ as in \cite[Example~1]{Sato2021}, then the corresponding zero mean curvature surface
\begin{align}\label{eq_sing1}
  f(u+iv)=\left(u,\frac{1}{2}\left(u^2-v^2\right),uv\right)\qquad(u,v\in\R)
\end{align}
has a cross cap (or a Whitney's umbrella) at $(u,v)=(0,0)$ (see Figure~\ref{fig_catenoid_sing1} (b)).
Therefore, even when each component is a polynomial of degree at most two, there exist nontrivial zero mean curvature surfaces with singularities.

\begin{figure}[htbp]
  \centering
  \begin{tabular}{c@{\hspace{2cm}}c}
    \includegraphics[width=5cm]{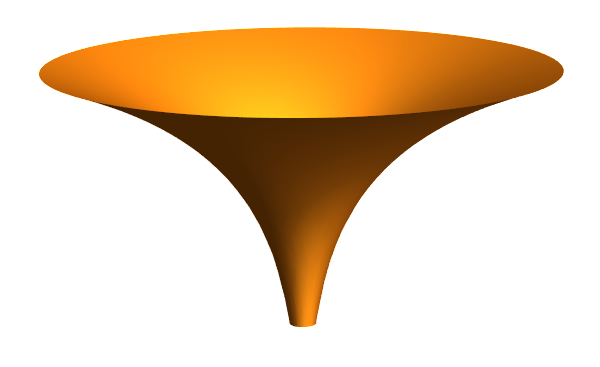} &
    \includegraphics[width=2.5cm]{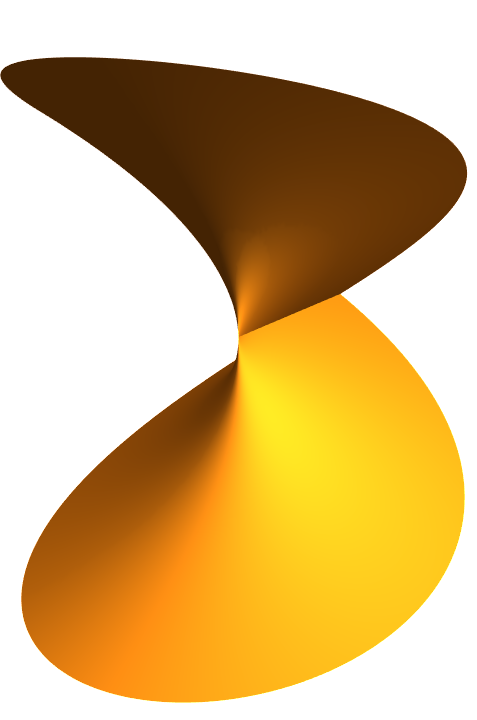} \vspace{0.3cm}\\
    (a) & (b)
  \end{tabular}
  \caption{
    (a) Left: The catenoid in $\I^3$ (the upper side is an expanding end, whereas the lower side is a shrinking end).
    (b) Right: The zero mean curvature surface given by \eqref{eq_sing1}.
    }
  \label{fig_catenoid_sing1}
\end{figure}

Based on these references, we introduce \textit{ZMC-faces} as a class of zero mean curvature surfaces in $\I^3$ allowing some kind singularities (cf. Definition~\ref{def_ZMC_face}).
This is an analogy of maxfaces in Umehara-Yamada~\cite{UY2006}, CMC-1 faces in Fujimori~\cite{Fujimori2006} and minfaces in Takahashi~\cite{Takahashi2012} and Akamine~\cite{Akamine2019}.
We show that ZMC-faces also admit a Weierstrass-type representation formula in Proposition~\ref{prop_representation_formula_ZMC_face}.
Moreover, based on the fact that there exists a certain duality for space-like surfaces in $\I^3$ (cf. Strubecker~\cite{Strubecker1978} and Pottmann-Liu~\cite{PL2007}), we discuss the ``dual surface'' obtained from a ZMC-face.
From the viewpoint of this duality, we introduce the \textit{dual Gauss map} for a ZMC-face in Definition~\ref{def_dual_surface}.

The main aim of this paper is to establish Osserman-type inequalities for ZMC-faces with or without singularities.
For this purpose, using a method similar to that of \cite{UY2006}, we shall define \textit{weak completeness} and \textit{finite-type} for ZMC-faces (cf. Definition~\ref{def_weakly_complete_and_finite_type}).
A catenoid and \eqref{eq_sing1} are examples of weakly complete and finite-type ZMC-faces.
In particular, for zero mean curvature surfaces represented as entire graphs, the following proposition holds.

\begin{proposition}\label{prop_finite_type_entire_ZMC}
  Let $\phi(x,y)$ be a harmonic function defined on the entire plane $\R^2$, and let $f\col\C\to\I^3$ be a zero mean curvature surface represented as the graph of $\phi$.
  Then, $f$ is finite-type if and only if $\phi$ is a harmonic polynomial.
\end{proposition}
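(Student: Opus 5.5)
We have to guess the definition of finite-type. By analogy with maxfaces (Umehara–Yamada), a ZMC-face $f\col\Sigma^2\to\I^3$ should be called finite-type if there is a compact Riemann surface $\overline{\Sigma}^2$ and finitely many points $p_1,\dots,p_n$ with $\Sigma^2$ biholomorphic to $\overline{\Sigma}^2\setminus\{p_1,\dots,p_n\}$, such that the Weierstrass data $(g,\omega)$ extend meromorphically to $\overline{\Sigma}^2$; equivalently, the holomorphic 1-forms $g\omega$ and $\omega$ extend meromorphically across each $p_j$. The plan is to compute the Weierstrass data of the graph explicitly and then reduce the statement to a classical fact about entire holomorphic functions.

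\emph{Step 1: Weierstrass data of a graph.} For $f(x+iy)=(\phi(x,y),x,y)$ with $z=x+iy$, the representation \eqref{eq_intro_Weierstrass_representation_formula} gives $\Re\int\omega=x$ and $\Re\int(-i\omega)=y$. This forces $\omega=dz$ (up to the normalization of congruence). The first component then satisfies
\[
  \phi=\Re\int g\,dz,
\]
so $g\,dz=2\partial\phi=(\phi_x-i\phi_y)\,dz$, that is, $g=\phi_x-i\phi_y$. This is an entire holomorphic function because $\phi$ is harmonic. Here $\Sigma^2=\C$, whose conformal compactification is $\C\cup\{\infty\}$ with a single puncture at $\infty$. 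I will check that this conformal structure is the one induced by $f$: the induced metric is $dx^2+dy^2$, which is conformal to $|dz|^2$.

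\emph{Step 2: Reduction.} Since $\omega=dz$ has a pole of order $2$ at $\infty$, it is always meromorphic there. Hence finite-type is equivalent to $g\,dz$ being meromorphic at $\infty$, i.e.\ to the entire function $g$ having at most a pole (not an essential singularity) at $\infty$. By the classical characterization of entire functions (Liouville/Casorati–Weierstrass, or the Laurent expansion at $\infty$), this holds if and only if $g$ is a polynomial.

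\emph{Step 3: Back to $\phi$.} If $\phi$ is a harmonic polynomial, then $\phi_x-i\phi_y$ is a polynomial, which gives one direction. Conversely, suppose $g$ is a polynomial. Then $G(z):=\int_0^z g\,dz$ is a polynomial and $\phi=\Re G+\mathrm{const}$, since $\phi$ and $\Re G$ have the same gradient on the connected set $\C$. Therefore $\phi$ is a polynomial in $x,y$, and it is harmonic.

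\emph{Main obstacle.} The main difficulty is matching the paper's precise definition of finite-type, which may be phrased through the data $(g,\omega)$, through the metric, or through $\Sigma^2$ being of finite topology together with conditions at the ends. It also requires handling the ambiguity of the Weierstrass data up to congruence: an isotropic rotation or similarity changes $g$ only by $g\mapsto ag+b$ or a similar transformation, which preserves polynomiality. If finite-type instead requires finiteness of $\int|dg|^2/(\cdots)$ or something similar, I would handle it the same way, by showing that essential singularities are excluded via the big Picard theorem.
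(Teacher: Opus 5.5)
Your argument is essentially the paper's: you compute the Weierstrass data $(g,\omega)=(2\phi_z,dz)=(\phi_x-i\phi_y,dz)$ and reduce finite-type to the entire function $\phi_z$ having at most a pole at $\infty$, i.e.\ being a polynomial. The one correction is that your guessed definition is not the paper's. The paper defines finite-type as finiteness of the total absolute curvature of the lift-metric $ds^2_{\!\#}=(2+|g|^2)|\omega|^2$, so Step~2 has to be justified through the identity $(-K_{\!\#})\,ds^2_{\!\#}=4|dg|^2/(2+|g|^2)^2$, which is half the pull-back of the spherical metric by $g/\sqrt{2}$. This identity gives total curvature $2\pi\deg(g)<\infty$ for polynomial $g$, and together with the big Picard theorem it rules out an essential singularity at $\infty$. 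That is exactly the fallback you describe in your last paragraph.
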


The equality conditions of the already known Osserman-type inequalities in various classes of surfaces (for example, \cite{Fujimori2006, KUY2004, UY2006}) are deeply related to the behavior of the ends, especially their embeddedness.
From this viewpoint, in Theorem~\ref{thm_first} we establish an Osserman-type inequality focusing on whether the ends of a ZMC-face are embedded.
In this paper, we refer to this inequality as the \textit{first Osserman-type inequality}.
However, by observing examples of ends, one sees that ZMC-faces exhibit a distinctive phenomenon which does not appear in other classes of surfaces.
Indeed, Proposition~\ref{prop_finite_type_entire_ZMC} implies that the graph of any harmonic polynomial is an example of weakly complete and finite-type ZMC-faces, and such examples attain equality in the first Osserman-type inequality since their ends are embedded.
This indicates that the first Osserman-type inequality does not fully capture the geometric features of ZMC-faces.

Accordingly, we restrict out attention to embedded ends with simple asymptotic behavior, such as \textit{planar} or \textit{catenoidal} ends.
As can be seen from a catenoid in $\I^3$, the upper and lower sides in Figure~\ref{fig_catenoid_sing1} (a) exhibit completely different behaviors.
For this reason, in this paper we distinguish between \textit{expanding} ends as on the upper side and \textit{shrinking} ends as on the lower side (cf. Definition~\ref{def_expanding_shrinking}).
Note that Kato~\cite{Kato} refers to them as large ends and small ends, respectively.
As a result, we obtain in Theorem~\ref{thm_second} an Osserman-type inequality whose equality condition is that each end is either planar, expanding catenoidal, or shrinking catenoidal.
We refer to this inequality as the \textit{second Osserman-type inequality}.
This inequality has an expression similar to the Osserman-type inequality for flat fronts in the hyperbolic 3-space, which is established in Kokubu-Umehara-Yamada~\cite{KUY2004}.
More precisely, the second Osserman-type inequality is concerned with the sum of the degrees of the Gauss map $g$ and the dual Gauss map $g_*$.

For shrinking ends, one observes an unusual phenomenon related to embeddedness.
In fact, there exists a example whose expression is asymptotic to a shrinking end of a catenoid, but which is not necessarily embedded.
Here, we say that such ends are \textit{layered shrinking catenoidal} (cf. Definition~\ref{def_layered_shrinking_catenoidal}).
This phenomenon is also specific to ZMC-faces in $\I^3$.
Consequently, by considering on layered shrinking catenoidal ends, we obtain the \textit{third Osserman-type inequality} as follows:

\begin{theorem}[Third Osserman-type inequality]\label{thm_third}
  Let $\bar\Sigma^2$ be a compact Riemann surface and $f\col\bar\Sigma^2\setminus\{p_1,\dots,p_n\}\to\I^3$ a weakly complete and finite-type ZMC-face.
  Then, the Gauss map $g\col\bar\Sigma^2\to\C\cup\{\infty\}$ of $f$ satisfies
  \begin{align}\label{eq_third_Osserman_type_inequality_intro}
    \deg(g)\ge n+k-\chi(\bar\Sigma^2),
  \end{align}
  where $\deg(g)$ is the degree of $g$ and $\chi(\bar\Sigma^2)$ is the Euler characteristic of $\bar\Sigma^2$.
  The equality of \eqref{eq_third_Osserman_type_inequality_intro} holds if and only if each end $p_j$ is either planar, expanding catenoidal, or layered shrinking catenoidal.
  In particular, if $f$ is represented as an entire graph, then the equality condition of \eqref{eq_third_Osserman_type_inequality_intro} is equivalent that the image of $f$ is a plane in $\I^3$.
\end{theorem}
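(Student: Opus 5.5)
Here $k$ is the number of shrinking ends; the statement uses it without defining it. The plan is a Riemann--Hurwitz / degree count for the Weierstrass data $(g,\omega)$ of Proposition~\ref{prop_representation_formula_ZMC_face}, done carefully at each end.

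\textbf{Step 1: extend the data and count degrees.} Weak completeness and finite type (Definition~\ref{def_weakly_complete_and_finite_type}) should imply that $g$ extends meromorphically to $\bar\Sigma^2$ and that $\omega$ and $g\omega$ have at most poles at the $p_j$. Since $f$ is a ZMC-face, the pair $(\omega, g\omega)$ has no common zeros on $\Sigma^2$; I expect this to follow from the definition together with the nondegeneracy of the induced data. So off the ends, the divisors of $\omega$ and $g\omega$ are controlled by the zeros and poles of $g$ alone.

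Consider the meromorphic 1-form $\omega$ on $\bar\Sigma^2$. Its divisor has degree $-\chi(\bar\Sigma^2)$. Its zeros in $\Sigma^2$ occur only at poles of $g$, with order equal to the pole order, so that $g\omega$ stays finite there. This gives
\begin{align*}
  -\chi(\bar\Sigma^2)=\deg(\omega)=\sum_{q\in\Sigma^2,\,g(q)=\infty}\ord_q(g^{-1})+\sum_{j=1}^n \ord_{p_j}\omega.
\end{align*}
The first sum is at most $\deg(g)$, minus the contributions of poles of $g$ at the ends. Rearranging, the inequality \eqref{eq_third_Osserman_type_inequality_intro} reduces to a local estimate at each end:
\begin{align*}
  -\ord_{p_j}\omega - \max\{0,-\ord_{p_j}g\}\ \ge\ 1+\epsilon_j,
\end{align*}
where $\epsilon_j=1$ if $p_j$ is shrinking and $\epsilon_j=0$ otherwise. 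If this bookkeeping turns out asymmetric, I will instead use the symmetric count through $g\omega$, or average the two counts.

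\textbf{Step 2: the local estimate.} Near $p_j$, take a coordinate $z$ with $p_j=\{z=0\}$ and write $g\sim z^{a}$ and $\omega\sim z^{b}\,dz$. Completeness forces $\omega$ or $g\omega$ to have a pole, and single-valuedness (no real periods) excludes a simple-pole residue in the $x$, $y$ components. Presumably the expanding/shrinking dichotomy of Definition~\ref{def_expanding_shrinking} is governed by which of $\omega$ and $g\omega$ dominates. For the catenoid $(1/z,dz)$ at $z=\infty$, one end has $\omega$ with a double pole, giving the expanding end. At $z=0$, the form $\omega$ is regular and $g\omega=dz/z$, giving the shrinking end. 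In this second case the form $g\omega$ carries the "excess" order, and that is the source of the extra term $k$.

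I will split into the expanding and shrinking cases and in each verify the local inequality. Equality holds exactly when the pole orders are minimal: double pole for planar ends, and the minimal catenoidal pattern $(g,\omega)\sim(1/z,dz)$ up to the allowed leading terms. In the shrinking case, the relevant condition is only on the leading orders $(a,b)$, not on $a=\pm1$. This is precisely why it characterises layered shrinking catenoidal ends (Definition~\ref{def_layered_shrinking_catenoidal}) rather than embedded ones.

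\textbf{Main obstacle.} The hard part is the shrinking case: pinning down, from the definitions, exactly which order pairs $(a,b)$ occur and showing equality is equivalent to the layered condition. I would first try writing $f$ asymptotically via the Weierstrass formula and matching with the definitions term by term.

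\textbf{Entire graphs.} Here $\bar\Sigma^2=\C\cup\{\infty\}$ and $n=1$. The graph end is expanding, so $k=0$, and the right-hand side equals $-1$. Hence equality would force $\deg g=-1$, which is impossible. I must recheck the normalisation for this case: by Proposition~\ref{prop_finite_type_entire_ZMC}, $g$ is the derivative of a holomorphic polynomial. The intended count is then $\deg g\ge 0$, with equality iff $g$ is constant, i.e. $\phi$ is affine and the image is a plane. I will adjust the end count accordingly once the definition of $k$ is confirmed.
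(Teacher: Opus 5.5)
Your overall strategy is the same as the paper's (Lemma~\ref{lem_deg_g_detail} combined with \eqref{eq_Riemann_Roch_omega}): the divisor of $\omega$ on $\bar\Sigma^2$ has degree $-\chi(\bar\Sigma^2)$, its zeros in $\Sigma^2$ are exactly the poles of $g$ with matching order, and everything reduces to a count at each end. Step~1 is also fine, by Proposition~\ref{prop_Huber_Osserman} and the compatibility condition. The proof breaks at the local estimate, for two reasons.

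\emph{Misreading of $k$.} $k$ is the number of \emph{expanding} ends (Theorem~\ref{thm_first} and the conventions of Section~\ref{sec_6}), not shrinking ones. With your reading the statement is false. Take $(g,\omega)=\bigl(1/z+1/(z-1),dz\bigr)$ on $\C\setminus\{0,1\}$. It satisfies (C) and (P), is weakly complete and finite-type, and has one expanding end at $\infty$ and two shrinking ends at $0,1$. Yet $\deg(g)=2<3=n+2-\chi(\bar\Sigma^2)$. The catenoid cannot expose this, because it has one end of each type.

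\emph{Sign error.} Your own identity gives
\[
\deg(g)=-\chi(\bar\Sigma^2)+\sum_{j=1}^n\bigl(\max\{0,-\ord_{p_j}g\}-\ord_{p_j}\omega\bigr),
\]
so the $\max$ term enters with a plus sign. As written, your local inequality already fails at the shrinking end $z=0$ of the catenoid: its left side is $-1$ and its right side is $2$.

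The correct local estimate is $\max\{0,-\ord_{p_j}g\}-\ord_{p_j}\omega\ge 1+\epsilon_j$, with $\epsilon_j=1$ for \emph{expanding} ends. The extra unit comes from $\omega$, not from $g\omega$:
\begin{itemize}
\item At an expanding end, $\omega$ has a pole (Proposition~\ref{prop_equivalent_expanding_shrinking}) with zero residue (Lemma~\ref{lem_residue_omega}), hence $\ord_{p_j}\omega\le-2$.
\item At a shrinking end, $\omega$ is holomorphic and weak completeness forces a pole of $g\omega$, hence $-\ord_{p_j}g\ge 1+\ord_{p_j}\omega$.
\end{itemize}
You had both ingredients but attached the excess to the wrong kind of end.

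The equality analysis also needs to be made precise. Your ``minimal pole orders'' description misses the key condition at expanding ends. Equality holds iff:
\begin{itemize}
\item at each expanding end, $\ord_{p_j}\omega=-2$ \emph{and} $g$ is finite there (i.e.\ $\ord_{p_j}\omega_*\ge 0$). By Proposition~\ref{prop_asymptotic_behaviors} this means planar or expanding catenoidal. An Enneper parabolic end, where $g$ has a simple pole, gives strict inequality.
\item at each shrinking end, $\ord_{p_j}(g\omega)=-1$. This is exactly the layered shrinking catenoidal condition of Proposition~\ref{prop_layered_shrinking_catenoidal_end}.
\end{itemize}
With $k$ corrected, the entire-graph case needs no renormalisation. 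There $n=k=1$, so the bound is $\deg(g)\ge 0$, with equality iff $g=2\phi_z$ is constant, i.e.\ the image is a plane.
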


Since, by definition, a layered shrinking catenoidal end includes an embedded one, the equality condition of the third Osserman-type inequality is weaker than that of the second Osserman-type inequality.

As an application of the third Osserman-type inequality, it can be used to derive an estimate for the maximal number of omitted values of the Gauss maps for weakly complete and finite-type ZMC-faces (see Section~\ref{sec_7} for details).

Note that space-like stationary surfaces in the Lorentz-Minkowski 4-space $\L^4$ can be regarded as a wider class than zero mean curvature surfaces in $\I^3$ (cf. Al\'ias-Palmer~\cite{AP1998} and Ma-Wang-Wang~\cite{MWW2013}).
In particular, Ma-Wang-Wang~\cite{MWW2013} establishes an Osserman-type inequality for algebraic stationary surfaces.
The discussion, however, is carried out under the assumption that the surface is not a zero mean curvature surfaces in $\I^3$.
Therefore, this paper fills the blank not addressed in \cite{MWW2013}.

\begin{figure}[htbp]
  \centering
  \includegraphics[width=4.5cm]{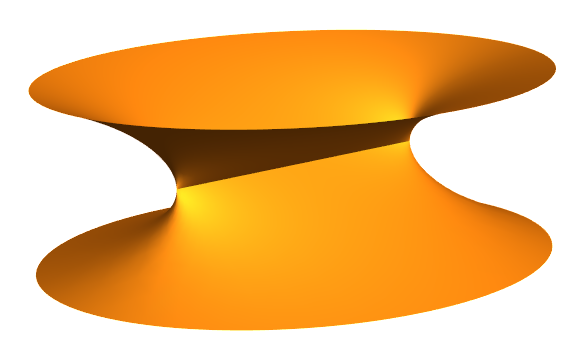}\vspace{0.3cm}\\
  (a)\vspace{0.5cm}\\
  \begin{tabular}{c@{\hspace{1.5cm}}c}
    \includegraphics[width=5cm]{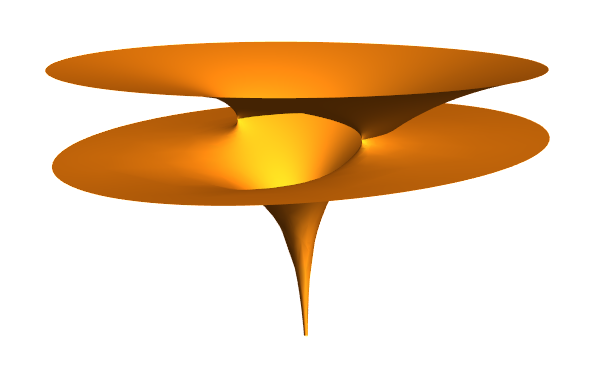} &
    \includegraphics[width=5cm]{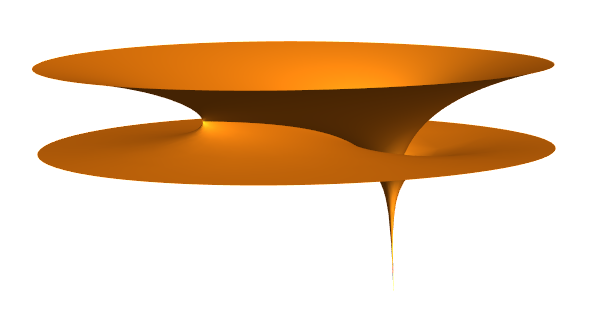}\vspace{0.3cm}\\
    (b) & (c)\vspace{0.5cm}\\
    \includegraphics[width=4cm]{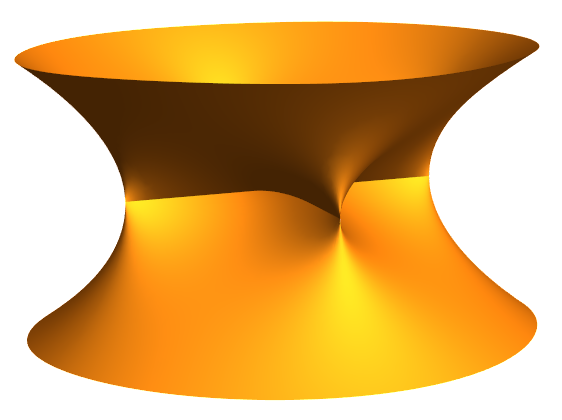} &
    \includegraphics[width=5cm]{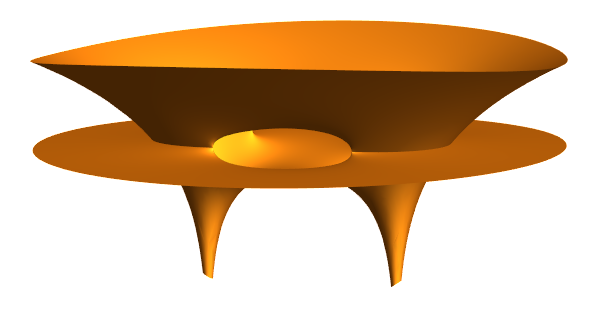} \vspace{0.3cm}\\
    (d) & (e)
  \end{tabular}
  \caption{
    Examples of weakly complete and finite-type ZMC-faces attaining equality in the Osserman-type inequalities. 
    (a) Upper: Example~\ref{exam_two_catenoidal_ends}.
    (b) Middle-left: Example~\ref{exam_same_ZMC_1}.
    (c) Middle-right: Example~\ref{exam_same_ZMC_2}.
    (d) Lower-left: Example~\ref{exam_genus_1} with $c=0$.
    (e) Lower-right: Example~\ref{exam_genus_1} with $c=\wp(1/4)$.
    Note that, the figures (d) and (e) are plotted with the $t$-axis expanded by a factor of $4$ to improve visibility.
  }
  \label{fig_intro}
\end{figure}

Moreover, in this paper we give several examples of ZMC-faces attaining equalities in the Osserman-type inequalities.
For example, the ZMC-face of Figure~\ref{fig_intro} (a) has two expanding catenoidal ends, and attains equality in all Osserman-type inequalities.
The ZMC-face of Figure~\ref{fig_intro} (b) also attains equality in all Osserman-type inequalities.
On the other hand, the ZMC-face of Figure~\ref{fig_intro} (c) has a layered shrinking catenoidal end which is not embedded.
Therefore, it satisfies the equality condition only for the third Osserman-type inequality.
These examples are of genus 0, but examples of genus 1 can also be constructed.
Indeed, the ZMC-faces of Figure~\ref{fig_intro} (d) and (e) are such examples, and attain equality in all three Osserman-type inequalities.
Furthermore, the four examples in Figure~\ref{fig_intro} (a), (b), (d) and (e) are properly embedded outside a compact subset.

The paper is organized as follows:
In Section~\ref{sec_1}, we review basic facts on zero mean curvature surfaces in the isotropic 3-space $\I^3$.
In Section~\ref{sec_2}, we define ZMC-faces and show the Weierstrass-type representation formula.
In Section~\ref{sec_3}, we introduce the dual Gauss map of a ZMC-face.
In Section~\ref{sec_4}, we discuss the weak completeness and the finiteness of the total curvature for a ZMC-face.
In Section~\ref{sec_5}, we analyze the asymptotic behaviors of ends.
In Section~\ref{sec_6}, we show the second and third Osserman-type inequalities.
In Section~\ref{sec_7}, we estimate the maximal number of omitted values of the Gauss maps.
In Section~\ref{sec_8}, we present examples which attain equalities in the Osserman-type inequalities.
In Appendix~\ref{app_A}, we give a criterion for cross caps of ZMC-faces.

\begin{acknowledgements}
  The author would like to express my gratitude to Yu Kawakami, Shin Kato, Seong-Deog Yang, Kentaro Saji, Kotaro Yamada, Atsufumi Honda and Jun Matsumoto for valuable comments and suggestions.
  The author is also grateful to Shunsuke Ichiki and Masaaki Umehara for their careful reading and helpful comments.
  This work was supported by JSPS KAKENHI Grant number JP25KJ1223.
\end{acknowledgements}

\section{Preliminaries}\label{sec_1}

In this section, we introduce some basic notations of zero mean curvature surfaces in the isotropic 3-space $\I^3$.
Here, we mainly follow Sato~\cite{Sato2021}, Seo-Yang~\cite{SY2021} and Cho-Lee-Lee-Yang~\cite{CLLY2024}.

The \textit{isotropic $3$-space} $\I^3$ is the 3-dimensional vector space $\R^3$ with the degenerate inner product $\inner{\cdot}{\cdot}$ defined by \eqref{eq_degenerate_metric}, whose signature is $(0++)$.
The isotropic 3-space $\I^3$ can be identified with the light-like hyperplane in the Lorentz-Minkowski 4-space $\L^4$ whose signature is $(-+++)$ via
\[\I^3\ni(t,x,y)\quad\longleftrightarrow\quad(t,x,y,t)\in\left\{(t,x,y,z)\in\L^4\setm t-z=0\right\}.\]
From now on, we use this identification without explicit mention.
We denote by the same bracket $\inner{\cdot}{\cdot}$ the canonical Lorentzian inner product on $\L^4$.

Let $\Sigma^2$ be a Riemann surface.
An immersion $f\col\Sigma^2\to\I^3$ is said to be \textit{space-like} if the first fundamental form $ds^2:=f^*\inner{\cdot}{\cdot}$ is a positive definite metric on $\Sigma^2$.
Assume that a space-like immersion $f\col\Sigma^2\to\I^3$ is conformal with respect to $ds^2$.
We define the constant vector $\mathfrak{p}\in\I^3$ as $\mathfrak{p}:=(1,0,0)$, and let us consider the subset
\[\mathcal{P}:=\{\vb*{v}\in\L^4\setm\inner{\vb*{v}}{\vb*{v}}=0,\;\inner{\vb*{v}}{\mathfrak{p}}=1\}.\]
The subset $\mathcal{P}$ can be regarded as the range of the Gauss map of a space-like surface in $\I^3$.
More precisely, given a space-like immersion $f\col\Sigma^2\to\I^3$, then there exists a unique map $\nu\col\Sigma^2\to \mathcal{P}$ satisfying $\inner{\nu}{df}=0$.
The map $\nu$ is called the \textit{light-like Gauss map} of $f$.
We define the \textit{second fundamental form} $h$ of $f$ as \[h:=-\inner{df}{d\nu}.\]
Let $\nabla$ be the Levi-Civita connection of $ds^2$, and let $K_{ds^2}$ be the Gaussian curvature of $ds^2$.
We denote by $\mathfrak{X}(\Sigma^2)$ the set of tangent vector fields on $\Sigma^2$.
Then, the Gauss and Codazzi equations are given by
\begin{align}
  K_{ds^2}&\equiv 0,\label{eq_Gauss}\\
 (\nabla_Xh)(Y,Z)&=(\nabla_Yh)(X,Z)\qquad(X,Y,Z\in\mathfrak{X}(\Sigma^2)),\label{eq_Codazzi}
\end{align}
respectively. \eqref{eq_Gauss} implies that any surface in $\I^3$ is intrinsically flat.

We call a map $\tau\col\I^3\to\I^3$ defined by the following expression an \textit{isometry}.
\begin{align}\label{eq_isometry}
  \tau\begin{pmatrix}
    t\\
    x\\
    y
  \end{pmatrix}:=
  \begin{pmatrix}
    \pm 1 & a & b\\
    \multicolumn{1}{c}{
     \begin{array}{@{} c @{}}
       0 \\
       0 
     \end{array}
    }
    &
    \multicolumn{2}{c}{
     \begin{array}{@{} c @{}}
       \vcenter{\hbox{\scalebox{1.3}{$T$}}}
     \end{array}
    }
  \end{pmatrix}
  \begin{pmatrix}
    t\\
    x\\
    y
  \end{pmatrix}
  +
  \begin{pmatrix}
    t_0\\
    x_0\\
    y_0
  \end{pmatrix}
  \qquad\left(
  T\in\textrm{O}(2),\;a,b,t_0,x_0,y_0\in\R
  \right).
\end{align}
The set of isometries of $\I^3$ is a Lie group with four connected components.
As in the case of surfaces in the Euclidean 3-space $\E^3$, the fundamental theorem of surface theory in $\I^3$ is also known (see Sachs~\cite[Theorem~8.8]{Sachs1990} and Sato~\cite[Theorem~5.4]{Sato2021} for details).

Let $f\col\Sigma^2\to\I^3$ be a space-like immersion.
The \textit{mean curvature function} $H$ of $f$ is defined as half of the trace of the second fundamental form $h$ with respect to $ds^2$.
We say that $f$ is a \textit{zero mean curvature surface} in $\I^3$ if $H$ is identically zero.
As stated in Introduction, the Weierstrass-type representation formula \eqref{eq_intro_Weierstrass_representation_formula} for zero mean curvature surfaces in $\I^3$ is known.
The pair $(g,\omega)$ is called the \textit{Weierstrass data} of $f$.
Some typical examples of zero mean curvature surfaces in $\I^3$ are as follows:

\begin{example}[Plane]
  Let $\Sigma^2:=\C$ and $(g,\omega):=(0,dz)$.
  Then, the corresponding zero mean curvature surface $f$ is given by
  \begin{align}\label{eq_plane}
    f(u+iv)=(0,u,v)\qquad(u,v\in\R).
  \end{align}
  $f$ is a totally geodesic surface in $\I^3$, that is, the second fundamental form $h$ of $f$ is identically zero.
  This surface is called a (space-like) \textit{plane} in $\I^3$.
\end{example}
\begin{example}[Enneper paraboloid]\label{exam_Enneper_paraboloid}
  Let $\Sigma^2:=\C$ and $(g,\omega):=(z,dz).$
  Then, the corresponding zero mean curvature surface $f$ is given by
  \begin{align}\label{eq_Enneper_paraboloid}
    f(u+iv)=\left(\frac{1}{2}(u^2-v^2),u,v\right)\qquad(u,v\in\R).
  \end{align}
  This surface is called an \textit{Enneper paraboloid} in $\I^3$.
\end{example}
\begin{example}[Catenoid]\label{exam_catenoid}
  Let $\Sigma^2:=\C\setminus\{0\}$ and $(g,\omega):=(1/z,dz).$
  Then, the corresponding zero mean curvature surface $f$ is given by
  \begin{align}\label{eq_catenoid}
    f(u+iv)=\left(\frac{1}{2}\log(u^2+v^2),u,v\right)\qquad(u,v\in\R,\;(u,v)\neq(0,0)).
  \end{align}
  This surface is called a \textit{catenoid} in $\I^3$ (see Figure~\ref{fig_catenoid_sing1} (a)).
  The image of $f$ is invariant under revolution around the $t$-axis.
  Remark that the first fundamental form $ds^2$ is complete at $z=\infty$ but not complete at $z=0$.
\end{example}

\section{ZMC-faces in the isotropic 3-space}\label{sec_2}

In this section, we introduce a class of zero mean curvature surfaces with singularities as \textit{ZMC-faces} in $\I^3$, and discuss the Weierstrass-type representation formula.
Throughout this section, we assume that $\Sigma^2$ denotes a Riemann surface and $\pi\col\tilde\Sigma^2\to\Sigma^2$ is the universal cover of $\Sigma^2$.

\subsection{The definition of ZMC-faces}

To begin with, we give the definition of ZMC-faces.

\begin{definition}\label{def_ZMC_face}
  A holomorphic immersion $F=(F^0,F^1,F^2)\col\tilde\Sigma^2\to\C^3$ is said to be \textit{$(0++)$-type null} if
  \begin{align}\label{eq_isotropic_null}
    (dF^1)^2+(dF^2)^2=0
  \end{align}
  holds and $F^1$ is non-constant (equivalently $F^2$ is non-constant).

  We say that a map $f\col\Sigma^2\to\I^3$ is a \textit{ZMC-face} if there exists a holomorphic $(0++)$-type null immersion $F\col\tilde\Sigma^2\to\C^3$ satisfying $f\circ\pi=\Re(F)$.
  We call $F$ the \textit{holomorphic lift} of $f$.
  A point $p\in\Sigma^2$ is said to be \textit{regular} if the induced metric $ds^2:=f^*\inner{\cdot}{\cdot}$ is positive definite at $p$.
  Moreover, $p$ is said to be \textit{singular} if $p$ is not regular.
\end{definition}

\begin{remark}
  In general, a holomorphic immersion $F=(F^0,F^1,F^2)\col\tilde\Sigma^2\to\C^3$ satisfying
  \begin{align}\label{eq_Euclidean_null}
    (dF^0)^2+(dF^1)^2+(dF^2)^2=0
  \end{align}
  is said to be \textit{Euclidean null}. On the other hand, $F$ is said to be \textit{Lorentzian null} if $F$ satisfies
  \begin{align}\label{eq_Lorentzian_null}
    -(dF^0)^2+(dF^1)^2+(dF^2)^2=0.
  \end{align}
  Therefore, \eqref{eq_isotropic_null} is an expression intermediate between \eqref{eq_Euclidean_null} and \eqref{eq_Lorentzian_null}.
\end{remark}

\begin{remark}
  We also impose the condition that $F^1$ and $F^2$ are non-constant in the definition of $(0++)$-type null, since otherwise the image of $f:=\Re(F)$ is contained in a line parallel to the $t$-axis, which is completely degenerate.
\end{remark}

The left-hand side of \eqref{eq_isotropic_null} can be factorized as
\[(dF^1)^2+(dF^2)^2=(dF^1-i\,dF^2)(dF^1+i\,dF^2).\]
Therefore, \eqref{eq_isotropic_null} is equivalent to
\begin{align}\label{eq_isotropic_null_general}
  dF^1-i\,dF^2=0\quad\textrm{or}\quad dF^1+i\,dF^2=0.
\end{align}
In the latter case, by multiplying $F^2$ by $-1$, we can reduce to it to the former case.
For $f:=\Re(F)$, this operation corresponds to the reflection in the $tx$-plane.
To simplify the subsequent discussion, unless explicitly stated otherwise, \underline{we shall only consider} a ZMC-face $f$ whose holomorphic lift $F$ satisfies \[dF^1-i\,dF^2=0.\]

The Weierstrass-type representation formula for ZMC-faces can be written as follows:

\begin{proposition}[Weierstrass-type representation formula for ZMC-faces]\label{prop_representation_formula_ZMC_face}
  Let $g$ and $\omega$ be a meromorphic function and a holomorphic 1-form on $\Sigma^2$, respectively.
  Suppose that the pair $(g,\omega)$ satisfies the following two conditions.
  \begin{enumerate}
    \renewcommand{\theenumi}{\ifcase\value{enumi}\or{\rm (C)}\or{\rm (P)}\fi}
    \renewcommand{\labelenumi}{\theenumi}
    \item\label{item_compatibility_condition}
      $g\omega$ is holomorphic, while $\omega$ and $g\omega$ have no common zeros.
    \item\label{item_period_condtion} For any closed curve $\gamma$ on $\Sigma^2$,
      \begin{align}\label{eq_period_condition_ZMC_face}
        \Re\oint_\gamma g\omega=0\quad\textrm{and}\quad\oint_\gamma\omega=0
      \end{align}
      hold.
  \end{enumerate}
  Then, the map $f\col\Sigma^2\to\I^3$ defined by
  \begin{align}\label{eq_Weierstrass_representation_formula_for_ZMC_face_in_thm}
    f:=\Re\int(g,1,-i)\omega
  \end{align}
  is a ZMC-face.
   
  Conversely, any ZMC-face can be represented in this way.
\end{proposition}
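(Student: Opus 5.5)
For the direct part, I will build the holomorphic lift on the universal cover $\pi\col\tilde\Sigma^2\to\Sigma^2$. Fix a base point $\tilde z_0\in\tilde\Sigma^2$ and set
\[
F:=\int_{\tilde z_0}^{\tilde z}(g,1,-i)\,\pi^*\omega .
\]
By \ref{item_compatibility_condition}, both $g\omega$ and $\omega$ are holomorphic, and $\tilde\Sigma^2$ is simply connected. Hence $F$ is a well-defined holomorphic map $\tilde\Sigma^2\to\C^3$.

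Next I check the conditions on $F$. We have $dF^1=\omega$ and $dF^2=-i\omega$, so $dF^1-i\,dF^2=\omega-\omega=0$, and therefore \eqref{eq_isotropic_null} holds. The function $F^1$ is non-constant because $\omega\not\equiv0$ (implicitly, $\omega$ is not identically zero, since otherwise $g\omega$ and $\omega$ would have common zeros everywhere). $F$ is an immersion because $dF=(g\omega,\omega,-i\omega)$ vanishes exactly at the common zeros of $g\omega$ and $\omega$, and there are none by \ref{item_compatibility_condition}.

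It remains to see that $\Re F$ descends to $\Sigma^2$. For a deck transformation corresponding to a closed curve $\gamma$, the change of $\Re F$ is
\[
\Bigl(\Re\oint_\gamma g\omega,\ \Re\oint_\gamma\omega,\ \Re\oint_\gamma(-i\omega)\Bigr)=\Bigl(\Re\oint_\gamma g\omega,\ \Re\oint_\gamma\omega,\ \Im\oint_\gamma\omega\Bigr).
\]
This vanishes by \ref{item_period_condtion}. So there is a map $f$ with $f\circ\pi=\Re F$, given by \eqref{eq_Weierstrass_representation_formula_for_ZMC_face_in_thm}, and $f$ is a ZMC-face.

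For the converse, let $f$ be a ZMC-face with holomorphic lift $F$, normalized so that $dF^1-i\,dF^2=0$. I define $\omega:=dF^1$, which is a holomorphic 1-form on $\tilde\Sigma^2$, not identically zero. Since $dF^2=-i\,dF^1$, we get $dF^0=g\,\omega$ with $g:=dF^0/dF^1$, a meromorphic function on $\tilde\Sigma^2$.

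The key step, and the main obstacle, is showing that $\omega$ and $g$ descend to $\Sigma^2$. Let $T$ be a deck transformation. Then $\Re(F\circ T-F)=0$, so $F\circ T-F$ is a holomorphic map with zero real part, hence a constant in $i\R^3$. Differentiating gives $T^*dF=dF$. Consequently $T^*\omega=\omega$ and $T^*(dF^0)=dF^0$, so $\omega$ and $g\omega=dF^0$ descend, and therefore $g$ descends as a meromorphic function. The same constancy also yields the period condition: the change of $F$ along $\gamma$ is purely imaginary, so $\Re\oint_\gamma g\omega=0$ and $\Re\oint_\gamma\omega=\Re\oint_\gamma(-i\omega)=0$, which gives $\oint_\gamma\omega=0$.

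Finally, $g\omega=dF^0$ is holomorphic. The immersion property of $F$ means $dF=(g\omega,\omega,-i\omega)$ never vanishes, so $g\omega$ and $\omega$ have no common zeros, which is \ref{item_compatibility_condition}. Since $f=\Re F$ up to an additive constant, this is exactly \eqref{eq_Weierstrass_representation_formula_for_ZMC_face_in_thm}.
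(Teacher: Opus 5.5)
Your proof is correct and follows the paper's argument essentially step for step: the direct part builds the lift on $\tilde\Sigma^2$ and checks nullity, the immersion property via the nonvanishing of $dF=(g\omega,\omega,-i\omega)$, and single-valuedness of $\Re F$ via the periods, just as in the paper's two lemmas. In the converse, the paper gets descent for free by defining $\omega:=\partial f^1+i\,\partial f^2$ and $g:=2\,\partial f^0/(\partial f^1+i\,\partial f^2)$ directly on $\Sigma^2$ and using $\partial(f\circ\pi)=\frac{1}{2}dF$, whereas you prove deck-invariance of $dF$ by hand; this is the same observation phrased differently, and it produces the same data $\omega=dF^1$, $g=dF^0/dF^1$.
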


The detailed proof of Proposition~\ref{prop_representation_formula_ZMC_face} is given in Subsection~\ref{subsec_2_2}.
We call the conditions \ref{item_compatibility_condition} and \ref{item_period_condtion} in Proposition~\ref{prop_representation_formula_ZMC_face} the \textit{compatibility condition} and the \textit{period condition}, respectively.
We also call the pair $(g,\omega)$ the \textit{Weierstrass data} of a ZMC-face $f$.
In particular, $g$ is called the \textit{Gauss map} of $f$ (cf. Remark~\ref{rem_geomety_of_gauss_map}).

\begin{remark}\label{rem_representation_formula}
  Several variants of the Weierstrass-type representation formula for zero mean curvature surfaces in $\I^3$ are known in \cite{CLLY2024, daSilva2021, Pember2020, Sato2021, SY2021, Strubecker1942}.
  In particular, Sato~\cite{Sato2021} considers zero mean curvature surfaces with singularities, including branch points, namely singular points of corank 2.
  In addition, da Silva~\cite{daSilva2021} and Seo-Yang~\cite{SY2021} discuss the Bj\"orling representation.
\end{remark}
\begin{remark}
  Let $f$ and $\tilde f$ be ZMC-faces obtained from Weierstrass data $(g,\omega)$ and $(\tilde g,\tilde\omega)$, respectively.
  Since an isometry of $\I^3$ can be expressed in \eqref{eq_isometry}, $f$ and $\tilde f$ are coincide up to an isometry if and only if there exist $\theta\in[0,2\pi),\;c\in\C$ and $\epsilon_1,\epsilon_2\in\{-1,1\}$ satisfying
  \[\tilde g=\epsilon_1e^{i\theta}g+c,\qquad\tilde\omega=\epsilon_2e^{-i\theta}\omega.\]
  Note that multiplying $g$ and $\omega$ by a positive real constant $s$ corresponds to a dilation with respect to the $t$-axis (namely, $(t,x,y)\mapsto (st,x,y)$) and a homothetic transformation (namely, $(t,x,y)\mapsto (st,sx,sy)$), respectively.
\end{remark}

Before we provide the proof of Proposition~\ref{prop_representation_formula_ZMC_face}, we calculate the first and second fundamental forms of $f$ using the Weierstrass data $(g,\omega)$.

\begin{proposition}\label{prop_first_second_fundamental_form_ZMC_face}
  Let $f\col\Sigma^2\to\I^3$ be a ZMC-face and $(g,\omega)$ be the Weierstrass data of $f$.
  Then, the first fundamental form $ds^2$ and the second fundamental form $h$ of $f$ are given by
  \begin{align}\label{eq_first_second_fundamental_form_ZMC_face}
    ds^2=|\omega|^2,\qquad h=\frac{1}{2}\omega dg+\frac{1}{2}\bar\omega d\bar g.
  \end{align}
  The set of singular points of $f$ coincides with the zeros of $\omega$, equivalently, with the poles of $g$.
  Moreover, $f$ has a zero mean curvature on the set of regular points.
\end{proposition}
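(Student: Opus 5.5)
Since everything in the statement is local, I will work on a simply connected coordinate neighborhood with complex coordinate $z=u+iv$, where $f=\Re\int(g,1,-i)\omega$ and $\omega=\hat\omega\,dz$.

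\emph{First fundamental form.} Write $F^1=\int\omega$ and $F^2=-i\int\omega$. Then
\[
f_z=\tfrac12\bigl(g\hat\omega,\hat\omega,-i\hat\omega\bigr),
\]
and the degenerate inner product ignores the $t$-component. With $\inner{\cdot}{\cdot}$ extended complex-bilinearly we get $\inner{f_z}{f_z}=\tfrac14\hat\omega^2(1+(-i)^2)=0$ and $\inner{f_z}{f_{\bar z}}=\tfrac14|\hat\omega|^2\cdot 2=\tfrac12|\hat\omega|^2$. Hence
\[
ds^2=2\inner{f_z}{f_{\bar z}}\,dz\,d\bar z=|\omega|^2 .
\]
This is positive definite exactly where $\omega\neq0$. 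By condition (C), $g\omega$ is holomorphic and $\omega$, $g\omega$ have no common zeros. So $\omega$ vanishes precisely at the poles of $g$, and the order of the zero equals the order of the pole. This gives the claim about the singular set.

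\emph{Second fundamental form.} At a regular point I will write down the light-like Gauss map explicitly. I seek $\nu=(\nu^0,\nu^1,\nu^2,\nu^3)\in\L^4$ with $\inner{\nu}{\nu}=0$, $\inner{\nu}{\mathfrak p}=1$ and $\inner{\nu}{f_u}=\inner{\nu}{f_v}=0$, where $\mathfrak p=(1,0,0,1)$ and $f$ is embedded as $(t,x,y,t)$.

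The pairing of $\nu$ with a vector $(t,x,y,t)$ is $-\nu^0t+\nu^1x+\nu^2y+\nu^3t$. The condition $\inner{\nu}{\mathfrak p}=1$ gives $\nu^3-\nu^0=1$. Tangency then reads $f^0_z=\nu^1f^1_z+\nu^2f^2_z$, that is,
\[
\tfrac12 g\hat\omega=\tfrac12\hat\omega(\nu^1-i\nu^2).
\]
So $\nu^1-i\nu^2=g$, i.e.\ $\nu^1=\Re g$ and $\nu^2=-\Im g$. The null condition then fixes $\nu^0$: expanding $-(\nu^0)^2+|g|^2+(\nu^0+1)^2=0$ gives $\nu^0=-(1+|g|^2)/2$, and $\nu^3=\nu^0+1$.

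Now $h=-\inner{df}{d\nu}$. The $\nu^0,\nu^3$ contributions cancel against the $t$-components, because $f$ has equal $0$th and $3$rd entries and $d\nu^3=d\nu^0$. Therefore
\[
h=-(df^1\,d\nu^1+df^2\,d\nu^2).
\]
With $df^1=\Re\omega$, $df^2=\Im\omega$ and $\nu^1+i\nu^2=\bar g$, this becomes $-\Re(\omega\,d\bar g)$ up to symmetrization. I will carefully recheck signs and conjugations, expecting to arrive at $\tfrac12(\omega\,dg+\bar\omega\,d\bar g)$. If a sign discrepancy appears, it is absorbed by the orientation convention $dF^1-i\,dF^2=0$ chosen in Section~2.

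As a cross-check I will compute the coefficients directly. The $(\partial_z,\partial_{\bar z})$ component of $h$ comes from $f_{z\bar z}$, and $f_{z\bar z}=0$ because $f$ is the real part of a holomorphic map. So $h$ has no $dz\,d\bar z$ term. The $dz^2$ coefficient is $\tfrac12\hat\omega g_z$, which I would verify by differentiating $f_z$ and pairing with $\nu$.

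\emph{Zero mean curvature, and the main obstacle.} Since $h$ has no $dz\,d\bar z$ part and $ds^2$ is conformal, $\operatorname{tr}_{ds^2}h=0$, so $H=0$ on the regular set. The main obstacle is bookkeeping: getting the embedding into $\L^4$, the normalization $\inner{\nu}{\mathfrak p}=1$, and the conjugations exactly right so that the $dz^2$ coefficient comes out as $\tfrac12\omega\,dg$ rather than its conjugate.
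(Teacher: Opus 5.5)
Your route is the paper's: compute $f_z$, read off $ds^2=|\omega|^2$, use (C) for the singular set, write the light-like Gauss map explicitly, and use $f_{z\bar z}=0$ for $H=0$. The parts on $ds^2$, the singular set and $H=0$ are fine.

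\textbf{The gap: the formula for $h$ is not established, because your $\nu$ is wrong.}

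\emph{Sign error in tangency.} Since $\nu^3-\nu^0=1$, the pairing you wrote gives $\inner{\nu}{f_z}=f^0_z+\nu^1f^1_z+\nu^2f^2_z$. So tangency reads $f^0_z=-(\nu^1f^1_z+\nu^2f^2_z)$, i.e.\ $\nu^1-i\nu^2=-g$. Hence $\nu^1=-\Re g$ and $\nu^2=\Im g$, as in \eqref{eq_lightlike_Gauss_map_representation}. Your choice $\nu^1-i\nu^2=g$ gives $\inner{\nu}{f_z}=g\hat\omega$, so it is not orthogonal to $df$.

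\emph{Conjugation slip.} We have $df^1\,d\nu^1+df^2\,d\nu^2=\Re\bigl(\omega\,\overline{(d\nu^1+i\,d\nu^2)}\bigr)$, so this term is $\pm\Re(\omega\,dg)$, never $\Re(\omega\,d\bar g)$. The latter is a $dz\,d\bar z$ term and would contradict your own correct observation that $h$ has no mixed part.

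\emph{Consequence.} Carried through with your $\nu$, where $d\nu^1+i\,d\nu^2=d\bar g$, you would get $h=-\Re(\omega\,dg)$. With the correct $\nu$, where $d\nu^1+i\,d\nu^2=-d\bar g$, you get $h=\Re(\omega\,dg)=\frac12\omega\,dg+\frac12\bar\omega\,d\bar g$. Your proposed cross-check would expose the error: $\inner{f_{zz}}{\nu}=\frac12(g_z\hat\omega+g\hat\omega_z)+\frac12\hat\omega_z(\nu^1-i\nu^2)$ reduces to $\frac12 g_z\hat\omega$ only when $\nu^1-i\nu^2=-g$.

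\textbf{The fallback does not work.} You suggest a sign discrepancy ``is absorbed by the orientation convention $dF^1-i\,dF^2=0$''. But the light-like Gauss map is uniquely determined by $\inner{\nu}{\nu}=0$, $\inner{\nu}{\mathfrak{p}}=1$ and $\inner{\nu}{df}=0$. Also, $g$ is determined by $f$ through \eqref{eq_def_Weierstrass_data}. So nothing is left that could flip the sign of $h$. Redoing the computation with the other convention $(g,1,i)\omega$ gives $h=\Re(\omega\,dg)$ again.

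The sign is also directly checkable. For the Enneper paraboloid ($g=z$, $\omega=dz$), the correct formula gives $h=du^2-dv^2$. This is the Hessian of $\frac12(u^2-v^2)$, consistent with $H=\frac12(\phi_{xx}+\phi_{yy})$ for graphs. Your sign would give minus the Hessian.

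So you must fix the tangency equation rather than appeal to a convention. Once that is done, the rest of your argument goes through as in the paper.
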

\begin{proof}
  Since we have
  \begin{align}\label{eq_df}
    df=\frac{1}{2}(g,1,-i)\omega+\frac{1}{2}(\bar g,1,i)\bar\omega
  \end{align}
  by \eqref{eq_Weierstrass_representation_formula_for_ZMC_face_in_thm}, the first fundamental form can be calculated as
  \[ds^2=\inner{df}{df}=|\omega|^2.\]
  It follows that $p\in\Sigma^2$ is a singular point of $f$ if and only if $p$ is a zero of $\omega$.
  By the compatibility condition of $(g,\omega)$, the zeros of $\omega$ coincide with the poles of $g$.
  
  Let $z$ be a complex coordinate of $\Sigma^2$, then we have
  \[f_{zz}=\frac{1}{2}(g_z\hat\omega+g\hat\omega_z,\hat\omega_z,-i\hat\omega_z),\qquad f_{z\bar z}=0,\]
  where $\hat\omega:=\omega/dz$.
  Moreover, one can check that
  \begin{align}\label{eq_lightlike_Gauss_map_representation}
    \nu=\left(-\frac{1}{2}|g|^2-\frac{1}{2},-\Re g,\Im g,-\frac{1}{2}|g|^2+\frac{1}{2}\right)
  \end{align}
  is exactly the light-like Gauss map of $f$ (cf. \cite[Theorem 3.9]{CLLY2024}).
  Therefore, the second fundamental form $h$ can be calculated as
  \[h=\inner{f_{zz}}{\nu}dz^2+2\inner{f_{z\bar z}}{\nu}dzd\bar z+\inner{f_{\bar z\bar z}}{\nu}d\bar z^2=\frac{1}{2}\omega dg+\frac{1}{2}\bar\omega d\bar g.\]
  Moreover, it follows that $f$ has a zero mean curvature on the set of regular points.
\end{proof}

\begin{remark}\label{rem_geomety_of_gauss_map}
  Let us consider the bijection $\Pi\col\mathcal{P}\to\C$ defined by
  \[\Pi(t,x,y,z):=\frac{x-iy}{t-z}=-x+iy.\]
  Then, by \eqref{eq_lightlike_Gauss_map_representation} we have $\Pi\circ\nu=g$ on the set of regular point of $f$.
  This is the reason that $g$ is called the Gauss map of $f$.
\end{remark}

\subsection{Proof of Proposition~\ref{prop_representation_formula_ZMC_face}}\label{subsec_2_2}

To show Proposition~\ref{prop_representation_formula_ZMC_face}, we prepare two lemmas.

\begin{lemma}\label{lem_compatibility_condtion}
  The following statements are equivalent.
  \begin{enumerate}[{\rm (i)}]
    \item $(g,\omega)$ satisfies the compatibility condition.
    \item $g\omega$ is holomorphic, and if we set a holomorphic map $F\col\tilde\Sigma^2\to\C^3$ as
      \begin{align}\label{eq_lift_by_Weierstrass_data}
        F:=\int(g,1,-i)\omega,
      \end{align}
      then $F$ is an immersion.
  \end{enumerate}
  Moreover, if one of these two conditions holds, the holomorphic immersion $F\col\tilde\Sigma^2\to\C^3$ defined by \eqref{eq_lift_by_Weierstrass_data} is $(0++)$-type null.
\end{lemma}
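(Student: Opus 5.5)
The plan is to work locally in a complex coordinate $z$ on $\tilde\Sigma^2$ (pulled back from $\Sigma^2$) and write $\omega=\hat\omega\,dz$. Assuming $g\omega$ is holomorphic, the map $F$ in \eqref{eq_lift_by_Weierstrass_data} is a well-defined holomorphic map on the simply connected $\tilde\Sigma^2$, since $(g\omega,\omega,-i\omega)$ are holomorphic 1-forms. Its derivative is
\[F_z=(g\hat\omega,\hat\omega,-i\hat\omega).\]
A holomorphic map into $\C^3$ is an immersion exactly when $F_z$ never vanishes. So the whole equivalence reduces to a pointwise statement about these three components.

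First I would show that (i) implies (ii). Holomorphy of $g\omega$ is part of (i). If $F_z(p)=0$ at some $p$, then both $\hat\omega(p)=0$ and $(g\hat\omega)(p)=0$, which makes $p$ a common zero of $\omega$ and $g\omega$. This contradicts (i).

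Next, (ii) implies (i). Again holomorphy of $g\omega$ is given. If $p$ were a common zero of $\omega$ and $g\omega$, then all three components of $F_z$ would vanish at $p$, contradicting the immersion property. One subtlety needs care here. The compatibility condition concerns zeros on $\Sigma^2$, while immersivity is a condition on $\tilde\Sigma^2$. Since $\pi$ is a local biholomorphism and the forms are pulled back, zeros correspond exactly, so the two conditions match.

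Finally, for the nullity claim I compute
\[(dF^1)^2+(dF^2)^2=\omega^2+(-i\omega)^2=0,\]
which is \eqref{eq_isotropic_null}, and in fact $dF^1-i\,dF^2=\omega-\omega=0$. It remains to check that $F^1=\int\omega$ is non-constant. Here I need $\omega\not\equiv0$. I expect this to be the only delicate point, since it is implicit in the setting rather than spelled out. If $\omega\equiv0$, every point is a zero of $\omega$, and also of $g\omega$ (which is then $0$ as a holomorphic form). This contradicts (i), unless $\Sigma^2$ is empty. Hence $\omega\not\equiv0$, so $F^1$ is non-constant, and $F$ is $(0++)$-type null.
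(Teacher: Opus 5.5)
Your proof is correct and follows essentially the same route as the paper. The paper encodes the immersion criterion as $|dF^0|^2+|dF^1|^2+|dF^2|^2=(2+|g|^2)|\omega|^2$, which vanishes exactly at common zeros of $\omega$ and $g\omega$, whereas you check $F_z=(g\hat\omega,\hat\omega,-i\hat\omega)\neq 0$ componentwise; both arguments then get non-constancy of $F^1$ from $\omega\not\equiv 0$ via the compatibility condition.
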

\begin{proof}
  In both statements, it is assumed that $g\omega$ is holomorphic, so we assume this as well.
  Let $F$ be the holomorphic map defined by \eqref{eq_lift_by_Weierstrass_data}, and write $F=(F^0,F^1,F^2)$.
  Since we have
  \[|dF^0|^2+|dF^1|^2+|dF^2|^2=(2+|g|^2)|\omega|^2,\]
  it follows that $F$ is an immersion if and only if $(g,\omega)$ satisfies the compatibility condition.
  
  Assume that $(g,\omega)$ satisfies the compatibility condition, let us check that $F$ is $(0++)$-type null.
  It is clear that $F$ satisfies \eqref{eq_isotropic_null}.
  Since $\omega$ does not vanish everywhere by the compatibility condition, $F^1$ is non-constant.
  Therefore, $F$ satisfies the definition of $(0++)$-type null.
\end{proof}

\begin{lemma}\label{lem_period_condtion}
  Suppose that $(g,\omega)$ satisfies the compatibility condition.
  Then, the following statements are equivalent.
  \begin{enumerate}[{\rm (i)}]
    \item $(g,\omega)$ satisfies the period condition.
    \item\label{item_period_condtion_2} If we define $F\col\tilde\Sigma^2\to\C^3$ by \eqref{eq_lift_by_Weierstrass_data},
      then $\Re(F)$ is single-valued on $\Sigma^2$.
  \end{enumerate}
  Moreover, if one of these two conditions holds, the map $f\col\Sigma^2\to\I^3$ defined by $f:=\Re(F)$ is a ZMC-face.
\end{lemma}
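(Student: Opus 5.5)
We prove the equivalence by comparing the periods of $F=\int(g,1,-i)\omega$ along deck transformations with the two conditions in \eqref{eq_period_condition_ZMC_face}. Fix a base point $\tilde z_0\in\tilde\Sigma^2$. For a closed curve $\gamma$ on $\Sigma^2$, let $\tilde\gamma$ be its lift starting at $\tilde z_0$, and let $\sigma_\gamma$ be the deck transformation sending $\tilde z_0$ to the endpoint of $\tilde\gamma$. Since $(g,1,-i)\omega$ is the pull-back of a holomorphic vector-valued $1$-form on $\Sigma^2$, we have
\[
F\circ\sigma_\gamma-F=\oint_\gamma(g,1,-i)\omega=\Bigl(\oint_\gamma g\omega,\;\oint_\gamma\omega,\;-i\oint_\gamma\omega\Bigr).
\]
This difference is constant on $\tilde\Sigma^2$: the differential of the left side is $\sigma_\gamma^*dF-dF=0$, and $\tilde\Sigma^2$ is connected. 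Hence $\Re(F)$ descends to a single-valued map on $\Sigma^2$ if and only if the real part of this vector vanishes for every closed curve $\gamma$.

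Next we read off the components. The first gives $\Re\oint_\gamma g\omega=0$. The second and third give $\Re\oint_\gamma\omega=0$ and $\Re\bigl(-i\oint_\gamma\omega\bigr)=\Im\oint_\gamma\omega=0$. Together the last two say exactly that $\oint_\gamma\omega=0$. So the single-valuedness of $\Re(F)$ is equivalent to \eqref{eq_period_condition_ZMC_face} holding for all $\gamma$, which proves (i)$\Leftrightarrow$(ii).

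For the final assertion, assume both conditions hold and define $f$ on $\Sigma^2$ by $f\circ\pi=\Re(F)$. By Lemma~\ref{lem_compatibility_condtion}, the compatibility condition makes $F$ a holomorphic $(0++)$-type null immersion. So $F$ serves as a holomorphic lift of $f$, and by Definition~\ref{def_ZMC_face} $f$ is a ZMC-face.

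The only step needing care is the constancy of $F\circ\sigma_\gamma-F$, and it is routine; otherwise the argument is direct bookkeeping. We would also note that the choice of base point and of the constant of integration changes $F$ only by an additive constant, which affects neither condition.
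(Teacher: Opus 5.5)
Your proof is correct and follows the paper's argument. You reduce single-valuedness of $\Re(F)$ to the vanishing of $\Re\oint_\gamma(g,1,-i)\omega$, read off the components using $\Re\bigl(-i\oint_\gamma\omega\bigr)=\Im\oint_\gamma\omega$, and get the final assertion from Lemma~\ref{lem_compatibility_condtion}, exactly as the paper does; you additionally spell out the deck-transformation justification of the first reduction, which the paper treats as evident.
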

\begin{proof}
  The statement in \eqref{item_period_condtion_2} is equivalent to
  \begin{align}\label{eq_lem_period}
    \Re\oint_\gamma(g,1,-i)\omega=(0,0,0)
  \end{align}
  for any close curve $\gamma$ on $\Sigma^2$.
  Since $\Re\oint_\gamma-i\omega=\Im\oint_\gamma\omega$ holds, it is clear that \eqref{eq_lem_period} is equivalent to the period condition of $(g,\omega)$.
  Moreover, since $F$ is a holomorphic $(0++)$-type null immersion by Lemma~\ref{lem_compatibility_condtion}, the map $f:=\Re(F)$ is a ZMC-face if $(g,\omega)$ satisfies the period condition in addition to the compatibility condition.
\end{proof}

\begin{proof}[Proof of Proposition~\ref{prop_representation_formula_ZMC_face}]
  By Lemma~\ref{lem_period_condtion}, it follows that $f\col\Sigma^2\to\I^3$ defined by \eqref{eq_Weierstrass_representation_formula_for_ZMC_face_in_thm} is a ZMC-face if $(g,\omega)$ satisfies the compatibility and period conditions.
  
  Conversely, let $f\col\Sigma^2\to\I^3$ be a ZMC-face and $F\col\tilde\Sigma^2\to\I^3$ the holomorphic lift of $f$.
  We define a meromorphic function $g$ and a holomorphic 1-form $\omega$ on $\Sigma^2$ as
  \begin{align}\label{eq_def_Weierstrass_data}
    g:=\frac{2\,\partial f^0}{\partial f^1+i\,\partial f^2},\qquad\omega:=\partial f^1+i\,\partial f^2,
  \end{align}
  where we denote $f=(f^0,f^1,f^2)$.
  By $f\circ\pi=\Re(F)$, we have
  \begin{align}\label{eq_partial_f_and_dF}
    \partial(f\circ\pi)=\frac{1}{2}dF.
  \end{align}
  Since $F$ is $(0++)$-type null, we have $\partial f^1-i\,\partial f^2=0$, and $\omega$ can be written as $\omega=2\partial f^1$.
  Moreover, since $F^1$ is non-constant, $f^1$ is also non-constant.
  This implies that $\omega$ does not vanish identically and $g\omega$ is holomorphic.
  By \eqref{eq_def_Weierstrass_data} and \eqref{eq_partial_f_and_dF}, we have
  \[dF=(g,1,-i)\omega.\]
  Therefore, by Lemmas~\ref{lem_compatibility_condtion} and \ref{lem_period_condtion}, the pair $(g,\omega)$ satisfies the compatibility and period conditions.
\end{proof}

\subsection{Singularities of ZMC-faces}

In this subsection, we discuss singular points of ZMC-faces.
The following proposition states that a singular point of a ZMC-face agrees with that of the mapping.

\begin{proposition}\label{prop_equivalent_singularity}
  Let $f\col\Sigma^2\to\I^3$ be a ZMC-face, and let $p\in\Sigma^2$ be a point.
  Then, $p$ is singular if and only if $f$ is not an immersion at $p$.
\end{proposition}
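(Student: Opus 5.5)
We argue locally at $p$, using the representation of Proposition~\ref{prop_representation_formula_ZMC_face}. By Proposition~\ref{prop_first_second_fundamental_form_ZMC_face}, $p$ is singular if and only if $\omega(p)=0$. So it suffices to show that $f$ fails to be an immersion at $p$ exactly when $\omega(p)=0$.

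Fix a complex coordinate $z=u+iv$ near $p$ and write $\omega=\hat\omega\,dz$, with $\hat\omega$ holomorphic. Set $\eta:=g\omega=\hat\eta\,dz$; by the compatibility condition $\hat\eta$ is holomorphic and has no common zero with $\hat\omega$. From \eqref{eq_df}, $f_z=\frac12(\hat\eta,\hat\omega,-i\hat\omega)$. Hence
\[
f_u=\Re(\hat\eta,\hat\omega,-i\hat\omega),\qquad f_v=-\Im(\hat\eta,\hat\omega,-i\hat\omega).
\]
Writing $\hat\omega=a+ib$, the last two components give
\[
f_u=(\Re\hat\eta,\,a,\,b),\qquad f_v=(-\Im\hat\eta,\,-b,\,a).
\]

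\textbf{Regular implies immersion.} If $\omega(p)\neq0$, then $(a,b)\neq(0,0)$. The $(x,y)$-parts $(a,b)$ and $(-b,a)$ are orthogonal and nonzero, so they are linearly independent. Therefore $df_p$ has rank $2$ and $f$ is an immersion at $p$. (Equivalently, this follows from $ds^2=|\omega|^2$ being positive definite.)

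\textbf{Singular implies not an immersion.} If $\omega(p)=0$, then $a=b=0$ at $p$. The vectors reduce to $f_u=(\Re\hat\eta,0,0)$ and $f_v=(-\Im\hat\eta,0,0)$, which both lie on the $t$-axis. So $df_p$ has rank at most $1$, and $f$ is not an immersion at $p$. Note that $\hat\eta(p)\neq0$ by compatibility, so the rank is exactly $1$: such singularities have corank one, which is consistent with the cross-cap example.

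No step is a real obstacle. The only point needing care is the translation between $\partial f$ and the real partial derivatives $f_u,f_v$, which comes from $f_z=\frac12(f_u-if_v)$ together with \eqref{eq_partial_f_and_dF}.
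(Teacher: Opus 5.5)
Your proof is correct and follows essentially the paper's route. Both arguments compute $df$ in a local coordinate from \eqref{eq_df} and test its rank. The paper does this by showing that the Euclidean cross product $f_u\times_E f_v=|\hat\omega|^2(1,-\Re g,\Im g)$ vanishes exactly at the zeros of $\omega$; you check linear independence of $f_u,f_v$ directly, and their cross product is precisely that $N$. Your observation that $\hat\eta(p)\neq0$ forces rank exactly one also anticipates the paper's later corank-one proposition.
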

\begin{proof}
  Let $(g,\omega)$ be the Weierstrass data of $f$.
  We take a complex coordinate $z=u+iv$ of $\Sigma^2$ centered at $p$, and we set $\hat\omega:=\omega/dz$.
  We denote by $\times_E$ the vector product of $\E^3$.
  Then, we have
  \begin{align}\label{eq_Euclidean_cross_product}
    N:=f_u\times_E f_v=-2if_z\times_E f_{\bar z}=|\hat\omega|^2\bigl(1,-\Re(g),\Im(g)\bigr).
  \end{align}
  Since $g\omega$ is holomorphic, the points at which $N=0$ are precisely the zeros of $\omega$.
  Therefore, by Proposition~\ref{prop_first_second_fundamental_form_ZMC_face} the assertion follows.
\end{proof}

Here, we recall some basic notations on singularities of smooth maps.
We denote by $\mathbb{S}^2$ the unit sphere centered at the origin in $\R^3$.
A smooth map $f\col\Sigma^2\to\R^3$ is called a \textit{frontal} if for each $p\in\Sigma^2$ there exist a neighborhood $U$ of $p$ and a smooth map $\vb*{n}\col U\to\mathbb{S}^2$ satisfying
\begin{align}\label{eq_Euclidean_normal_vector}
  \inner{\vb*{n}}{df}_E=0,
\end{align}
where $\inner{\cdot}{\cdot}_E$ is the canonical Euclidean inner product on $\R^3$.
For a maxface, it is always a frontal (cf. \cite[Lemma 3.3]{UY2006}); however, we can show that a ZMC-face with at least one singular point is not frontal as follows:

\begin{proposition}
  A ZMC-face $f\col\Sigma^2\to\I^3$ is not a frontal if there exists a singular point of $f$.
\end{proposition}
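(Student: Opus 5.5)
We argue by contradiction: a Euclidean unit normal along $f$ is forced to be parallel to the vector $N$ in \eqref{eq_Euclidean_cross_product} on the regular set, and near a singular point this forced normal has no continuous extension. Let $p$ be a singular point. By Proposition~\ref{prop_first_second_fundamental_form_ZMC_face}, $p$ is a zero of $\omega$ and hence a pole of $g$. Suppose $f$ were a frontal, so there are a neighborhood $U$ of $p$ and a smooth map $\vb*{n}\col U\to\mathbb{S}^2$ satisfying \eqref{eq_Euclidean_normal_vector}.

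First we pin down $\vb*{n}$ on regular points. Zeros of $\omega$ are isolated, since $\omega\not\equiv0$, so we may shrink $U$ so that $p$ is the only singular point in $U$. At every $q\in U\setminus\{p\}$ the map $f$ is an immersion by Proposition~\ref{prop_equivalent_singularity}. There $df(T_q\Sigma^2)$ is a 2-plane, and its Euclidean normal line is spanned by $N=f_u\times_E f_v$. Hence on $U\setminus\{p\}$,
\[
\vb*{n}=\pm\frac{(1,-\Re g,\Im g)}{\sqrt{1+|g|^2}}.
\]
Because $U\setminus\{p\}$ is connected and $\vb*{n}$ is continuous, the sign is constant on it.

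Next we let $q\to p$. Since $g$ has a pole at $p$, we have $|g(q)|\to\infty$ and the first component of $\vb*{n}$ tends to $0$. The limit of the remaining components must be $\mp\lim(\Re g,-\Im g)/|g|$. Take a coordinate $z$ centered at $p$ with $g=z^{-m}h(z)$, where $m\ge1$ and $h(0)\neq0$. Then $g/|g|$ behaves like $e^{-im\theta}\,h(0)/|h(0)|$ along the ray $z=re^{i\theta}$ as $r\to0$. The limit therefore depends on $\theta$, since $m\ge1$. This contradicts the continuity of $\vb*{n}$ at $p$.

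The only delicate point is justifying that the limit genuinely depends on direction, which needs $m\ge1$. This holds because $p$ is a zero of $\omega$ and $g\omega$ is holomorphic and nonvanishing at $p$ by the compatibility condition \ref{item_compatibility_condition}, so $g$ has an actual pole at $p$.
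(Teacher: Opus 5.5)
Your proof is correct and follows the same route as the paper. On the regular set the Euclidean unit normal is $\pm(1,-\Re g,\Im g)/\sqrt{1+|g|^2}$, and because $g$ has a pole of order $m\ge 1$ at the singular point, its limit along rays $z=re^{i\theta}$ depends on $\theta$. Your extra steps only make explicit what the paper leaves implicit: that any admissible $\vb*{n}$ must equal this normal up to a sign that is constant on the punctured neighborhood, and that one can write $g=z^{-m}h(z)$ instead of normalizing to $g=z^{-m}$.
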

\begin{proof}
  We use the notation introduced in the proof of Proposition~\ref{prop_equivalent_singularity}.
  Then, on the set of regular points of $f$, a map $\vb*{n}$ satisfying \eqref{eq_Euclidean_normal_vector} can be calculated as
  \[\vb*{n}:=\frac{N}{\sqrt{\inner{N}{N}_E}}=\frac{1}{\sqrt{|g|^2+1}}\bigl(1,-\Re(g),\Im(g)\bigr).\]
  Suppose that $p$ is a singular point of $f$.
  Since $g$ has a pole at a singular point $p$ by Proposition~\ref{prop_first_second_fundamental_form_ZMC_face}, we can take $g=z^{-m}$ by changing a complex coordinate $z$, where $m$ is a positive integer.
  If we represent $z$ in the polar coordinate as $z=re^{i\theta}$, then $\vb*{n}$ is rewritten as
  \[\vb*{n}=\frac{1}{\sqrt{1+r^{2m}}}\bigl(r^m,-\cos(m\theta),-\sin(m\theta)\bigr).\]
  Since the limit of $\vb*{n}$ as $r\to 0$ depends on the value of $\theta$, it follows that $f$ is not a frontal.
\end{proof}

\begin{proposition}
  Singular points of a ZMC-face are isolated and have corank 1.
\end{proposition}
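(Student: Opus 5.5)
By Proposition~\ref{prop_first_second_fundamental_form_ZMC_face}, the singular points of $f$ are exactly the zeros of $\omega$, and equivalently the poles of $g$. The plan is to read off both isolatedness and corank from a local computation of $df$ in terms of the Weierstrass data $(g,\omega)$ near such a point.

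\emph{Isolatedness.} The compatibility condition forces $\omega\not\equiv 0$. Indeed, in the proof of Proposition~\ref{prop_representation_formula_ZMC_face} we saw that $\omega$ does not vanish identically, since $F^1$ is non-constant. A holomorphic 1-form that is not identically zero has isolated zeros on a (connected) Riemann surface, so the singular set is discrete.

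\emph{Corank.} Fix a singular point $p$ and a complex coordinate $z=u+iv$ centered at $p$. Write $\omega=\hat\omega\,dz$ with $\hat\omega(0)=0$. By the compatibility condition, $\hat g:=g\hat\omega$ is holomorphic with $\hat g(0)\neq 0$. From \eqref{eq_df} we have
\[
f_z=\frac12\bigl(\hat g,\hat\omega,-i\hat\omega\bigr),
\]
so at $z=0$ we get $f_z(0)=\tfrac12(\hat g(0),0,0)$. Since $f$ is real, $f_u=2\Re f_z$ and $f_v=-2\Im f_z$. This gives
\[
f_u(0)=\bigl(\Re\hat g(0),0,0\bigr),\qquad f_v(0)=\bigl(-\Im\hat g(0),0,0\bigr).
\]
Both vectors are parallel to $\mathfrak p=(1,0,0)$, and they are not both zero because $\hat g(0)\neq0$. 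Hence $\operatorname{rank} df_p=1$, so $p$ has corank $1$; in particular $p$ is not a branch point of corank $2$.

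There is no real obstacle here. The only point needing care is that the corank-$1$ conclusion depends essentially on the condition in \ref{item_compatibility_condition} that $\omega$ and $g\omega$ have no common zeros. Without it, $\hat g(0)=0$ would be possible and $df_p$ would vanish, which is exactly the branch-point situation in Sato's framework (Remark~\ref{rem_representation_formula}). I would close the proof by pointing this out explicitly.
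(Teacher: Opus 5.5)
Your proof is correct and follows the paper's own argument: singular points are isolated as zeros of the not-identically-zero holomorphic form $\omega$, and the corank is $1$ because the compatibility condition gives $g\omega(p)\neq 0$ in \eqref{eq_df}, so $df_p\neq 0$. Your explicit check that $f_u(p)$ and $f_v(p)$ are both multiples of $(1,0,0)$ also shows directly that the rank is at most $1$, which the paper instead gets implicitly from $p$ being singular.
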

\begin{proof}
  Since a singular point corresponds to a zero of $\omega$, singular points of a ZMC-face are isolated.
  By the compatibility condition, $g\omega$ does not have zero at a singular point $p$.
  Therefore, $df$ does not vanish at $p$ by \eqref{eq_df}, and this implies that the corank at $p$ is 1.
\end{proof}

\begin{example}\label{exam_singular_points}
  Let $\Sigma^2:=\C$ and $(g,\omega):=(z^{-m},z^mdz)$, where $m$ is a positive integer.
  We can check that $(g,\omega)$ satisfies the compatibility and period conditions.
  The corresponding ZMC-face $f$ has a singular point at $z=0$ (see Figure~\ref{fig_singular_points}).
  Note that the cases $m=1,2$ are already given in Sato~\cite[Figure 2]{Sato2021} as examples of zero mean curvature surfaces with singularities.
\end{example}

\begin{figure}[htbp]
  \centering
  \begin{tabular}{c@{\hspace{1cm}}c}
    \includegraphics[width=3cm]{sing1.png} &
    \includegraphics[width=3.2cm]{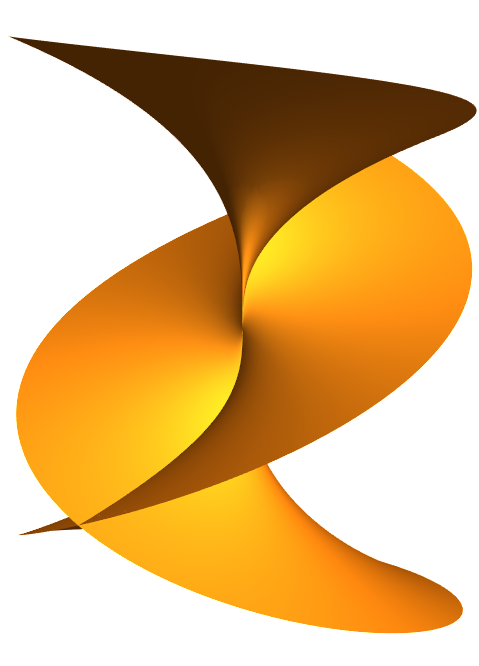}
  \end{tabular}
  \caption{Example~\ref{exam_singular_points} (Left: $m=1$, Right: $m=2$).}
  \label{fig_singular_points}
\end{figure}

For a smooth map $f\col\Sigma^2\to\R^3$ having a singular point at $p\in\Sigma^2$, we say that $f$ has a \textit{cross cap} (or a \textit{Whitney's umbrella}) at $p$ if there exist a smooth coordinate $(u,v)$ of $\Sigma^2$ centered at $p$ and a diffeomorphism $\Phi$ from a neighborhood of $f(0,0)$ to a neighborhood of the origin in $\R^3$ such that
\[\Phi\circ f(u,v)=(u,uv,v^2).\]
There is a well-known criterion by Whitney \cite{Whitney1944} to determine whether a singular point is a cross cap.
Using this criterion, one can check that in Example~\ref{exam_singular_points} with $m=1$, the ZMC-face $f$ has a cross cap at $z=0$.
Moreover, we obtain a simple criterion for a singular point of a ZMC-face to be a cross cap as follows:

\begin{proposition}\label{prop_criterion_cross_cap}
  Let $f\col\Sigma^2\to\I^3$ be a ZMC-face, and $p\in\Sigma^2$ be a singular point of $f$.
  Then, $f$ has a cross cap at $p$ if and only if $\ord_p(\omega)=1$ holds.
\end{proposition}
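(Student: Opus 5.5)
Near the singular point $p$, I will use Whitney's criterion: a map germ $f\col(\R^2,0)\to(\R^3,0)$ of corank one at $0$ has a cross cap at $0$ if and only if there is a null vector field $\eta$ near $0$ with $\eta$ spanning $\ker df_0$, and
\[
\det\bigl(f_u,\ f_v,\ \eta\eta f\bigr)\neq0 \ \ \text{at } 0
\]
(equivalently, $\det(\xi f,\eta f,\eta\eta f)$ or the usual criterion: $\det(f_u, f_{uv}, f_{vv})\ne0$ when $\ker df_0=\partial_v$ and $f_v=0$ only at the point). The more robust form I will use is the one that avoids choosing $\eta$: the singular set $\{N=0\}$ is isolated, and the cross cap is characterized by the function $\Psi:=\det(f_u,f_v, \cdot)$ having nondegenerate behavior. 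Concretely, I will use the criterion: $p$ is a cross cap iff $\det(\xi f,\eta f,\eta\eta f)(p)\neq0$ where $\xi$ is transversal to the kernel and $\eta$ is any vector field with $\eta_p\in\ker df_p$.

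First I normalize the data. Since $p$ is singular, $\omega$ vanishes at $p$ and $g$ has a pole there; by the compatibility condition $g\omega$ is holomorphic and nonzero at $p$. Take a coordinate $z=u+iv$ centered at $p$ with $\omega=z^m\hat\omega_0(z)dz$, $m=\ord_p(\omega)\ge1$, $\hat\omega_0(0)\ne0$. Write $\alpha:=g\omega/dz$, with $\alpha(0)\neq0$. From \eqref{eq_df},
\[
f_z=\tfrac12\bigl(\alpha,\ \hat\omega,\ -i\hat\omega\bigr),\qquad \hat\omega=z^m\hat\omega_0 .
\]
At $p$, $f_z=\frac12(\alpha(0),0,0)$, so $df_p$ has image spanned by $(1,0,0)$ and kernel the real direction $\eta$ with $\Re(\alpha(0)\,dz(\eta))=0$. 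After rotating the coordinate ($z\mapsto e^{i\theta}z$) and using the isometry freedom of the Remark after Proposition~\ref{prop_representation_formula_ZMC_face}, I may assume $\alpha(0)$ is real positive, so $\ker df_p=\partial_v$, $\xi=\partial_u$.

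Next compute the determinant. We have $f_u=2\Re f_z$, $f_v=-2\Im f_z$, and second derivatives from $f_{zz}$ ($f_{z\bar z}=0$): $f_{vv}=-2\Re f_{zz}$. The $t$-components of $f_u,f_v$ are $\Re\alpha$, $-\Im\alpha$, while the $(x,y)$-components of $f_z$ are $\tfrac12\hat\omega(1,-i)$, vanishing to order $m$. Expanding $\det(f_u,f_v,f_{vv})$ at $p$: at $p$, $f_u=(\alpha(0),0,0)$, $f_v=0$, so using $\eta\eta f$ directly fails; instead I use the extended vector field $\eta=\partial_v$ and the criterion in the form of Fukui–Hasegawa/Whitney: $f$ is a cross cap iff $f_v(p)=0$ and $\det(f_u,f_{uv},f_{vv})(p)\ne0$. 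The $(x,y)$-parts of $f_{uv},f_{vv}$ at $p$ come from $\hat\omega'(0)$, which is nonzero exactly when $m=1$. For $m=1$: $f_{zz}(0)$ has $(x,y)$-part $\frac12\hat\omega_0(0)(1,-i)$, and $f_{uv}=-2\Im f_{zz}$, $f_{vv}=-2\Re f_{zz}$, so the $2\times2$ minor is $4\det\bigl(\Im(c,-ic),\Re(c,-ic)\bigr)$ with $c=\frac12\hat\omega_0(0)$, which equals $\pm 4|c|^2\neq0$; multiplied by the $t$-entry $\alpha(0)\ne0$ of $f_u$ (whose $(x,y)$-part vanishes) the determinant is nonzero. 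For $m\ge2$, $f_{uv}$ and $f_{vv}$ have zero $(x,y)$-part at $p$, so the determinant vanishes and the point is not a cross cap.

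The main obstacle is the converse direction: vanishing of the Whitney determinant for a particular $\eta$ must be shown to genuinely exclude a cross cap, which is fine because the criterion is an equivalence independent of the choice of $\eta$ and $\xi$ (I will cite Whitney~\cite{Whitney1944} in that form, or alternatively note that for $m\ge2$ the $(x,y)$-projection $z\mapsto \int z^m\hat\omega_0dz$ is an $(m+1)$-fold branched covering, whereas a cross cap's projection along its image of $df_p$ is a fold, a diffeomorphism-invariant contradiction). I expect the $\det$ bookkeeping to be routine and would defer it to Appendix~\ref{app_A}.
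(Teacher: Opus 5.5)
Your proof is correct and is essentially the paper's. Both apply Whitney's criterion (Fact~\ref{fact_criterion_cross_cap}) in coordinates whose second direction spans $\ker df_p$: the paper uses the vector fields $\xi,\eta$ (the coordinate fields of $w=\int g\omega$) to get the closed formula $\det(\xi f,\xi\eta f,\eta\eta f)=\left|d(1/g)/(g\omega)\right|^2$, whereas you rotate $z$ so that $\alpha(0)>0$ and read off $\det(f_u,f_{uv},f_{vv})(p)=4\alpha(0)|c|^2$ directly from $\hat\omega'(0)$. You should, however, discard your preliminary forms $\det(f_u,f_v,\eta\eta f)$ and $\det(\xi f,\eta f,\eta\eta f)$, which vanish at every singular point (the invariant form is $\det(\xi f,\xi\eta f,\eta\eta f)$), and also the fold-projection alternative, which is false: the normal form projects to $(u,v)\mapsto(uv,v^2)$, not a fold, and linear projections are not preserved by target diffeomorphisms. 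The equivalence in Whitney's criterion already settles $m\ge 2$.
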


The proof of Proposition~\ref{prop_criterion_cross_cap} is given in Appendix~\ref{app_A}.

\section{Dual surfaces of ZMC-faces}\label{sec_3}

It is known that space-like surfaces in $\I^3$ have a certain duality (cf. Strubecker~\cite{Strubecker1978} and Pottmann-Liu~\cite{PL2007}).
In particular, the ``dual surface'' of a zero mean curvature surface in $\I^3$ has also a zero mean curvature (on the set of regular points).
As a generalization of this duality, we introduce the ``dual map'' $f_*$ of a given ZMC-face $f$ and discuss the relationship between their Weierstrass data.

Let $f=(f^0,f^1,f^2)\col\Sigma^2\to\I^3$ be a ZMC-face and $(g,\omega)$ be the Weierstrass data of $f$.
We denote by $S_f(\subseteq\Sigma^2)$ the set of singular points of $f$.
We define a holomorphic function $g_*$ on $\Sigma^2$ by
\begin{align}\label{eq_def_gs}
  g_*:=f^1+if^2=\int\omega.
\end{align}
Moreover, we define a meromorphic 1-form $\omega_*$ on $\Sigma^2$ by
\begin{align}\label{eq_def_omegas}
  \omega_*:=dg,
\end{align}
which is holomorphic on $\Sigma^2_*:=\Sigma^2\setminus S_f$.
We also denote by $\pi_*\col\tilde\Sigma^2_*\to\Sigma^2_*$ the universal cover of $\Sigma^2_*$.
Let us consider a holomorphic map $F_*\col\tilde\Sigma^2_*\to\C^3$ defined as
\[F_*:=\int(g_*,1,i)\omega_*.\]
Since $g_*$ is holomorphic, $F_*$ is $(0++)$-type null immersion except at the zeros of $\omega_*$.
Note that $F_*$ satisfies the latter case of \eqref{eq_isotropic_null_general}.
Using integration by parts, we obtain
\begin{align*}
  \oint_\gamma F_*=\left(-\oint_\gamma g\omega,0,0\right),
\end{align*}
for any closed curve $\gamma$ on $\Sigma^2_*$.
Therefore, the period condition for the pair $(g_*,\omega_*)$ follows from that for $(g,\omega)$.
This implies that the map $f_*\col\Sigma^2_*\to\I^3$ defined as $f_*:=\Re(F_*)$ is single-valued.
Moreover, $f_*$ is a zero mean curvature surface on the regular points.

\begin{definition}\label{def_dual_surface}
  For a given ZMC-face $f\col\Sigma^2\to\I^3$, the map $f_*$ defined as above is called the \textit{dual map} of $f$.
  We call the pair $(g_*,\omega_*)$ defined by \eqref{eq_def_gs} and \eqref{eq_def_omegas} the \textit{dual Weierstrass data} of $f$.
  In particular, $g_*$ is called the \textit{dual Gauss map} of $f$.
\end{definition}

Combining \eqref{eq_def_gs} and \eqref{eq_def_omegas}, the relationship between the Weierstrass data $(g,\omega)$ and the dual Weierstrass data $(g_*,\omega_*)$ is expressed as follows:
\begin{align}\label{eq_relation_dual_Weierstrass_data}
  \omega=dg_*,\qquad\omega_*=dg.
\end{align}

\begin{remark}
  The dual map may not be a ZMC-face, in general.
  For example, let us consider the Weierstrass data $(g,\omega)=(z^2,dz)$ on the entire plane $\C$.
  Then, the dual Weierstrass data is given by $(g_*,\omega_*)=(z,2zdz)$, and the map $F_*$ is not an immersion at $z=0$.
  Therefore, the dual map $f_*$ has a branch point at $z=0$ and is not a ZMC-face.
\end{remark}

\begin{example}
  Let $\phi(x,y)$ be a harmonic function, and let us consider the case that the ZMC-face $f$ given by $f(x+iy):=(\phi(x,y),x,y)$ as the graph of $\phi$.
  Since the Weierstrass data of $f$ is $(g,\omega)=(2\phi_z,dz)$, where $z=x+iy$, the dual Weierstrass data of $f$ can be written as $(g_*,\omega_*)=\left(z,2\phi_{zz}dz\right)$.
  Therefore, the dual map $f_*$ can be calculated as
  \[f_*=\Re\int(z,1,i)2\phi_{zz}dz=\bigl(x\phi_x+y\phi_y-\phi,\phi_x,\phi_y\bigr).\]
  This expression is the same as that in \cite{PL2007, Strubecker1978}.
\end{example}

\section{Weakly complete and finite-type ZMC-faces}\label{sec_4}

In this section, we discuss the weak completeness and the finiteness of the total curvature for a ZMC-face.
We define these property using the ``lift-metric'', which is the same way as in \cite{UY2006}.

\begin{definition}[cf. {\cite[Definition~2.7]{UY2006}}]
  Let $f\col\Sigma^2\to\I^3$ be a ZMC-face and $F=(F^0,F^1,F^2)$ the holomorphic lift of $f$, and let $(g,\omega)$ be the Weierstrass data of $f$.
  We define a Riemannian metric $ds^2_{\!\#}$ on $\Sigma^2$ by
  \begin{align}\label{eq_lift_metric}
    ds^2_{\!\#}:=(2+|g|^2)|\omega|^2=|dF^0|^2+|dF^1|^2+|dF^2|^2,
  \end{align}
  and we call it the \textit{lift-metric} with respect to $f$.
\end{definition}

\begin{definition}\label{def_weakly_complete_and_finite_type}
  A ZMC-face $f\col\Sigma^2\to\I^3$ is said to be \textit{weakly complete} (resp. \textit{finite-type}) if the lift-metric $ds^2_{\!\#}$ is a complete Riemannian metric (resp. a Riemannian metric with finite absolute total Gaussian curvature) on $\Sigma^2$.
\end{definition}

\begin{remark}
  From \eqref{eq_lift_metric} together with Kawakami~\cite[Theorem 2.1]{Kawakami2013}, it follows that the maximum number of omitted values of the Gauss map $g$ of a weakly complete ZMC-face, excluding a plane, is 3.
  The number of omitted values of the Gauss map $g$ under the additional assumption of finite-type, as well as weak completeness, will be discussed in Section~\ref{sec_7}.
\end{remark}

Since we have $ds^2\le ds^2_{\!\#}$ by \eqref{eq_lift_metric}, completeness for ZMC-faces implies weak completeness.
However, a catenoid (cf. Example~\ref{exam_catenoid}) is weakly complete but not complete.

\begin{proposition}\label{prop_Huber_Osserman}
  Let $f\col\Sigma^2\to\I^3$ be a weakly complete and finite-type ZMC-face.
  Then, there exist a compact Riemann surface $\bar\Sigma^2$ and distinct points $p_1,\dots,p_n\in\bar\Sigma^2$ such that $\Sigma^2$ is biholomorphic to $\bar\Sigma^2\setminus\{p_1,\dots,p_n\}$.
  Moreover, the Weierstrass data of $f$ can be extended meromorphically on $\bar\Sigma^2$.
\end{proposition}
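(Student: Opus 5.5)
The plan is to reduce the statement to the classical theorems of Huber and Osserman applied to the lift-metric $ds^2_{\!\#}$. By Definition~\ref{def_weakly_complete_and_finite_type}, $ds^2_{\!\#}$ is a complete Riemannian metric on $\Sigma^2$ with $\int_{\Sigma^2}|K_{\#}|\,dA_{\#}<\infty$. Huber's theorem states that a complete surface with finite total absolute curvature is finitely connected and conformally equivalent to a compact Riemann surface with finitely many points removed. Applied to $ds^2_{\!\#}$, which is conformal to the complex structure of $\Sigma^2$ since it is of the form $\lambda|dz|^2$, this gives a compact $\bar\Sigma^2$ and points $p_1,\dots,p_n$ with $\Sigma^2\cong\bar\Sigma^2\setminus\{p_1,\dots,p_n\}$. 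This is the first assertion.

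For the meromorphic extension, I would first compute the curvature of $ds^2_{\!\#}=(2+|g|^2)|\omega|^2$. Since $|\omega|^2$ is flat (locally $|\hat\omega|^2|dz|^2$ with $\log|\hat\omega|$ harmonic), we get
\[K_{\#}\,dA_{\#}=-\tfrac{1}{2}\Delta\log(2+|g|^2)\,du\,dv=-\frac{4|g'|^2}{(2+|g|^2)^2}\,du\,dv.\]
This holds away from the poles of $g$, and near a pole it is understood as the pullback of a smooth form. So the total absolute curvature is, up to a constant factor, the spherical area of $g$ with respect to a rescaled Fubini--Study metric on $\C\cup\{\infty\}$ (substitute $w=g/\sqrt2$). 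Finite total curvature therefore means that $g\colon\Sigma^2\to\C\cup\{\infty\}$ has finite spherical area. Near each puncture $p_j$ the domain is a punctured disc. A holomorphic map of finite spherical area on a punctured disc cannot have an essential singularity: by Picard/Casorati--Weierstrass the map would cover almost every value infinitely often, forcing infinite area. Following Osserman's argument, $g$ thus extends meromorphically to each $p_j$.

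It remains to extend $\omega$. On a punctured disc $\{0<|z|<\epsilon\}$ around $p_j$ write $\omega=\hat\omega\,dz$. Having shown $g$ is meromorphic at $p_j$, we may shrink the disc so that $g$ has no poles or zeros except possibly at $0$, so $|g|\sim c|z|^m$. Completeness of $ds^2_{\!\#}$ toward $p_j$ then forces the combination $(2+|g|^2)^{1/2}|\hat\omega|$ to blow up fast enough that every path to $0$ has infinite length. Conversely, finiteness of total curvature together with Osserman's lemma gives that $\hat\omega$ cannot have an essential singularity. Concretely, I would apply the standard argument: if $\hat\omega$ had an essential singularity, take $h:=\hat\omega\cdot\sqrt{2+|g|^2}$-type comparisons with $z^{k}\hat\omega$ and use the fact that a complete flat-type conformal metric $|\hat\omega|^2|z|^{2k}|dz|^2$ on a punctured disc whose conformal factor is the modulus of a holomorphic function without zeros must have a pole at the puncture (Osserman's lemma, via a Schwarz--Christoffel/developing map argument). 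Since $|g|$ behaves like $|z|^m$, the factor $(2+|g|^2)$ is comparable to $|z|^{2\min(m,0)}$. Hence $ds^2_{\!\#}$ is comparable to $|z^{\min(m,0)}\hat\omega|^2|dz|^2$, whose conformal factor is the modulus of a holomorphic nonvanishing function near $0$. Osserman's lemma then shows it has at most a pole, so $\omega$ is meromorphic at $p_j$.

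The main obstacle is this last step, namely the meromorphy of $\omega$ rather than of $g$. Here the metric is not literally $|\text{holomorphic}|^2|dz|^2$, and the bounded factor $(2+|g|^2)/|z|^{2\min(m,0)}$ has to be absorbed. I would handle this by noting that the factor is bounded above and below near $0$, so completeness transfers to the flat metric $|z^{\min(m,0)}\hat\omega|^2|dz|^2$, to which Osserman's lemma on flat complete ends applies directly.
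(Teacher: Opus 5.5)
Your proposal is correct and follows essentially the same route as the paper: Huber's theorem for the lift-metric, the Great Picard theorem for $g$ via the identity $(-K_{\#})\,ds^2_{\!\#}=\frac{1}{2}(\mu\circ g)^*ds^2_{\mathrm{FS}}$, and Osserman's Lemma~9.6 for $\omega$. The only difference is cosmetic. You absorb $2+|g|^2$ uniformly into $z^{\min(m,0)}\hat\omega$, whereas the paper splits into two cases: where $g$ is finite at $p_j$ it applies the lemma to $\omega$, and where $g$ has a pole it writes $ds^2_{\!\#}=(2/|g|^2+1)|g\omega|^2$ and applies the lemma to $g\omega$; in both versions the nonvanishing required by the lemma comes from the compatibility condition.
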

\begin{proof}
  We denote by $K_{\!\#}$ the Gaussian curvature of the lift-metric $ds^2_{\!\#}$.
  Then, we have
  \begin{align}\label{eq_relation_lift_Gauss_curvature_and_Fubini_Study_metric}
    (-K_{\!\#})ds^2_{\!\#}=\frac{4|dg|^2}{(2+|g|^2)^2}=\frac{1}{2}\frac{4|d(g/\sqrt{2})|^2}{(1+|g/\sqrt{2}|^2)^2}=\frac{1}{2}(\mu\circ g)^*ds^2_{\mathrm{FS}},
  \end{align}
  where $\mu$ is the map $\C\cup\{\infty\}\ni z\mapsto z/\sqrt{2}\in\C\cup\{\infty\}$ and $ds^2_{\mathrm{FS}}$ is the Fubini-Study metric of the Riemann sphere.
  In particular, $K_{\#}$ has non-positive value.
  Therefore, it follows from Huber's theorem \cite{Huber1958} (cf. \cite[Appendix~A]{UY2006}) that $\Sigma^2$ is biholomorphic to $\bar\Sigma^2\setminus\{p_1,\dots,p_n\}$.
  Using \eqref{eq_relation_lift_Gauss_curvature_and_Fubini_Study_metric} and the Great Picard theorem, we can show that $g$ has at most pole at $p_j$ for each $j=1,\dots,n$.
  In the case that $g$ does not have a pole at $p_j$, it follows from weak completeness of $f$ that $|\omega|^2$ is complete at $p_j$.
  By using \cite[Lemma~9.6]{Osserman1986}, $\omega$ has a pole at $p_j$.
  Let us consider the case that $g$ has a pole at $p_j$.
  Since the lift-metric is written as
  \[ds^2_{\!\#}=\left(\frac{2}{|g|^2}+1\right)|g\omega|^2,\]
  $|g\omega|^2$ is complete at $p_j$.
  Therefore, by the same reasoning, $\omega$ has at most pole at $p_j$.
\end{proof}

Proposition~\ref{prop_finite_type_entire_ZMC} in Introduction can be shown as follows:

\begin{proof}[Proof of Proposition~\ref{prop_finite_type_entire_ZMC}]
  We set $z=x+iy$ and denote by $(g,\omega)$ the Weierstrass data of $f(x+iy)=(\phi(x,y),x,y)$.
  Since $(g,\omega)=(2\phi_z,dz)$ holds, $f$ is finite-type if and only if $\phi_z$ is a rational function on the Riemann sphere.
  Since $g$ has no poles on $\C$, this is equivalent that $\phi_z$ is a polynomial.
  Therefore, $f$ is finite-type if and only if $\phi$ is a harmonic polynomial.
\end{proof}

\begin{remark}
  For a weakly complete and finite-type ZMC-face $f$, by Proposition~\ref{prop_Huber_Osserman} and \eqref{eq_relation_lift_Gauss_curvature_and_Fubini_Study_metric} we have
  \[\deg(g)=\frac{1}{2\pi}\int_{\Sigma^2}(-K_{\!\#})dA_{\#},\]
  where $dA_{\#}$ is the volume form with respect to the lift-metric $ds^2_{\!\#}$.
  Therefore, \eqref{eq_third_Osserman_type_inequality_intro} can be rewritten as
  \[\frac{1}{2\pi}\int_{\Sigma^2}(-K_{\!\#})dA_{\#}\ge n+k-\chi(\bar\Sigma^2).\]
\end{remark}

We call each point $p_j$ in Proposition~\ref{prop_Huber_Osserman} an \textit{end} of $f$.
We say that an end $p$ is \textit{embedded} if there exists a neighborhood $U\subseteq\bar\Sigma^2$ of $p$ such that the restriction of $f$ to $U\setminus\{p\}$ is an embedding.

The following lemma will be used in the subsequent discussion.

\begin{lemma}\label{lem_residue_omega}
  Let $f\col\Sigma^2\to\I^3$ be a weakly complete and finite-type ZMC-face, and let $(g,\omega)$ be the Weierstrass data of $f$.
  Then, the residue of $\omega$ at an end is zero.
\end{lemma}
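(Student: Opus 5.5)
The residue statement will follow directly from the period condition \ref{item_period_condtion} in Proposition~\ref{prop_representation_formula_ZMC_face}, with Proposition~\ref{prop_Huber_Osserman} guaranteeing that the residue makes sense.

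First, Proposition~\ref{prop_Huber_Osserman} lets us write $\Sigma^2=\bar\Sigma^2\setminus\{p_1,\dots,p_n\}$ and tells us that $\omega$ extends meromorphically to $\bar\Sigma^2$. So at each end $p_j$ the residue $\operatorname{Res}_{p_j}\omega$ is well defined: choose a local complex coordinate $z$ centered at $p_j$ on a punctured disc $U\setminus\{p_j\}\subseteq\Sigma^2$, and expand $\omega$ as a Laurent series there.

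Next, let $\gamma$ be a small loop in $U\setminus\{p_j\}$ winding once around $p_j$. This $\gamma$ is a closed curve in $\Sigma^2$, so the period condition \eqref{eq_period_condition_ZMC_face} gives
\[\oint_\gamma\omega=0.\]
Only the second half of \eqref{eq_period_condition_ZMC_face}, the full complex vanishing of the $\omega$-period, is needed; no condition on real parts is involved. By the residue theorem applied to the Laurent expansion,
\[\oint_\gamma\omega=2\pi i\,\operatorname{Res}_{p_j}\omega,\]
and therefore $\operatorname{Res}_{p_j}\omega=0$.

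There is no real obstacle in this argument. The only point to check is that the residue is defined at all, that is, that $\omega$ has at most a pole (not an essential singularity) at $p_j$, and this is exactly the meromorphic extension supplied by Proposition~\ref{prop_Huber_Osserman}.

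An equivalent route is to note that $g_*=\int\omega=f^1+if^2$ is single-valued, as in \eqref{eq_def_gs}. Since $\omega=dg_*$ is then exact near $p_j$, its residue at $p_j$ must vanish.
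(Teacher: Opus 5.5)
Your proposal is correct and follows the paper's approach: the paper simply says the lemma follows immediately from the period condition $\oint_\gamma\omega=0$, and you spell this out by applying it to a small loop around the end and using $\oint_\gamma\omega=2\pi i\,\operatorname{Res}_{p_j}\omega$. Invoking Proposition~\ref{prop_Huber_Osserman} so that the residue is taken for a meromorphic form is a reasonable bit of extra care.
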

\begin{proof}
  This follows immediately from the period condition \eqref{eq_period_condition_ZMC_face}.
\end{proof}

As stated in Example~\ref{exam_catenoid}, a catenoid has two different ends: one end is complete, while the other is not complete but weakly complete.
To distinguish between them, we introduce the following definition.

\begin{definition}\label{def_expanding_shrinking}
  Let $f\col\Sigma^2\to\I^3$ be weakly complete and finite-type ZMC-face.
  An end $p$ of $f$ is said to be \textit{expanding} (resp. \textit{shrinking}) if the first fundamental form $ds^2$ is complete (resp. not complete) at $p$.
\end{definition}

\begin{proposition}\label{prop_equivalent_expanding_shrinking}
  An end $p$ is expanding (resp. shrinking) if and only if $\ord_p(\omega)\le -2$ {\rm (}resp. $\ord_p(\omega)\ge 0${\rm )} holds, where $\ord_p(\omega)$ denotes the order of $\omega$ at $p$.
\end{proposition}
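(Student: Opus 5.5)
By Proposition~\ref{prop_Huber_Osserman}, $\omega$ extends meromorphically to $\bar\Sigma^2$, so $\ord_p(\omega)$ is a well-defined integer. By Lemma~\ref{lem_residue_omega}, $\operatorname{Res}_p\omega=0$. Hence the case $\ord_p(\omega)=-1$ cannot occur, and every end satisfies either $\ord_p(\omega)\le -2$ or $\ord_p(\omega)\ge 0$. Since ``expanding'' and ``shrinking'' are complementary by Definition~\ref{def_expanding_shrinking}, it is enough to show two implications:
\begin{itemize}
\item[(a)] if $\ord_p(\omega)\le -2$, then $ds^2=|\omega|^2$ is complete at $p$;
\item[(b)] if $\ord_p(\omega)\ge 0$, then $ds^2$ is not complete at $p$.
\end{itemize}

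For both implications, take a complex coordinate $z$ on a punctured disc around $p$ with $z(p)=0$, and write $\omega=z^m h(z)\,dz$ with $h$ holomorphic and $h(0)\neq 0$.

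\emph{Implication (a).} Suppose $m\le -2$. Any divergent path tending to $p$ has $|\omega|$-length bounded below by a constant times $\int_0^{r_0} r^{m}\,dr=\infty$. So $|\omega|^2$ is complete at $p$. This is the standard computation, as in \cite[Lemma~9.6]{Osserman1986}.

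\emph{Implication (b).} Suppose $m\ge 0$. Then $\omega$ is holomorphic at $p$. The radial segment from a point near $p$ into $p$ has finite $|\omega|$-length, because $|z^m h|$ is bounded near $0$. So $ds^2$ is not complete at $p$.

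One subtlety needs care. The metric $ds^2=|\omega|^2$ may degenerate at singular points, namely the zeros of $\omega$, but completeness at an end only concerns paths going out the end. In case (b), with $m\ge 1$, the metric degenerates at $p$ itself, but this does not matter: a finite-length divergent curve still exists. I would take completeness at an end to mean that every divergent path tending to $p$ has infinite length.

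The only real point in the argument is excluding $\ord_p(\omega)=-1$. That exclusion is exactly the residue statement, which follows from the period condition $\oint_\gamma\omega=0$ applied to a small loop around $p$.
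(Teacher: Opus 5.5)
Your proposal is correct and follows the paper's argument. The paper likewise uses $ds^2=|\omega|^2$ to say that $ds^2$ is complete at $p$ exactly when $\omega$ has a pole there, and rules out $\ord_p(\omega)=-1$ via the vanishing residue from Lemma~\ref{lem_residue_omega}; you simply write out the length estimates that the paper leaves implicit.
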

\begin{proof}
  By Proposition~\ref{prop_first_second_fundamental_form_ZMC_face}, the first fundamental form $ds^2$ is complete at an end $p$ if and only if $\omega$ has a pole at $p$.
  Since $\ord_p(\omega)\neq -1$ by Lemma~\ref{lem_residue_omega}, the assertion follows.
\end{proof}

As is well-known in the theory of minimal surfaces in the Euclidean 3-space $\E^3$, Jorge-Meeks~\cite{JM1983} and Schoen~\cite{Schoen1983} showed that complete end which has the finite total curvature is embedded if and only if it is asymptotic to an end of either a plane or a catenoid.
By contrast, the condition for an end of a ZMC-face in $\I^3$ to be embedded is quite relaxed as follows:

\begin{proposition}\label{prop_embedded_end}
  Let $f\col\Sigma^2\to\I^3$ be a weakly complete and finite-type ZMC-face, and let $(g,\omega)$ be the Weierstrass data of $f$.
  Then, an end $p$ is embedded if and only if the order of $\omega$ at $p$ is $-2$ or $0$.
\end{proposition}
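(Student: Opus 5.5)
The key observation is that embeddedness of an end is decided almost entirely by the horizontal part of $f$. Write $f=(f^0,f^1,f^2)$ and recall from \eqref{eq_def_gs} that $f^1+if^2=g_*=\int\omega$, which is single-valued on $\Sigma^2$ by the period condition and Lemma~\ref{lem_residue_omega}. Take a complex coordinate $z$ centered at $p$ on a punctured neighborhood $U\setminus\{p\}$. By Proposition~\ref{prop_Huber_Osserman}, $\omega$ is meromorphic at $p$, and by Lemma~\ref{lem_residue_omega} its residue vanishes. Let $m:=\ord_p(\omega)$. Then $m\neq -1$, and
\[g_*=c+ a z^{m+1}\bigl(1+O(z)\bigr),\qquad a\neq 0,\]
with the exponent $m+1\neq 0$. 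I will split into the cases $m\le -2$ and $m\ge 0$, following Proposition~\ref{prop_equivalent_expanding_shrinking}.

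\textbf{Sufficiency.} If $m=-2$, then $g_*=c+az^{-1}(1+O(z))$, so $w:=1/(g_*-c)$ is a holomorphic coordinate near $p$ with $w=a^{-1}z+O(z^2)$. Hence $g_*$ is injective on a small punctured disk. If $m=0$, then $g_*-c=az(1+O(z))$ is likewise a local biholomorphism onto a neighborhood of $c$, hence injective near $p$. In both cases the projection $(f^1,f^2)$ of $f$ to the $xy$-plane is injective on $U\setminus\{p\}$, so $f$ is injective there. Moreover, $m\in\{0,-2\}$ means $\omega$ has no zeros in a small punctured neighborhood, so there are no singular points and $f$ is an immersion there by Proposition~\ref{prop_equivalent_singularity}. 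An injective immersion whose projection is a homeomorphism onto its (open) image is an embedding, because $f$ is the graph of $f^0\circ g_*^{-1}$ over that image. So the end is embedded.

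\textbf{Necessity.} Suppose $m\notin\{0,-2\}$. If $m\ge 1$, then $p$ is not a singular point, since $p$ has been removed, and zeros of $\omega$ near $p$ only occur at $p$ itself. Here the issue is instead that $g_*-c\sim az^{m+1}$ is $(m+1)$-to-one near $p$. I must show that $f$ itself, not just its projection, fails to be injective, so the argument has to use $f^0$. Write
\[f^0=\Re\int g\omega .\]
The map $z\mapsto (g_*(z),f^0(z))$ identifies points $z_1\neq z_2$ with $g_*(z_1)=g_*(z_2)$, and these form $m+1$ sheets $z\mapsto\zeta^k z$ after a coordinate change making $g_*-c=z^{m+1}$ exactly, with $\zeta=e^{2\pi i/(m+1)}$. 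Self-intersection occurs where $f^0(z)=f^0(\zeta^k z)$. The function $h(z):=f^0(z)-f^0(\zeta z)$ is a real harmonic function on a punctured disk (possibly with a $\log|z|$ term and poles), and it is antisymmetric under the cyclic structure: summing $f^0(\zeta^j z)-f^0(\zeta^{j+1}z)$ over $j$ gives zero. So on each small circle $|z|=r$, the sheet-differences cannot all have a fixed sign, and by continuity and the intermediate value theorem some $h_k$ vanishes at some point of every circle. Those zeros give distinct points with the same image, arbitrarily close to $p$, so no punctured neighborhood is embedded. The case $m\le -3$ is identical with $g_*-c\sim az^{m+1}$, $|m+1|\ge 2$, using the coordinate $w=(g_*-c)^{-1/(|m+1|)}$.

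The main obstacle is the necessity direction: turning ``the projection is multi-sheeted'' into a genuine double point of $f$. The cyclic-sum/intermediate value trick is what I would try first. The care needed is in choosing the coordinate so that the deck action $z\mapsto\zeta z$ is exact, and in checking that the coincident points found are distinct. They are distinct because $\zeta^k z\neq z$ for $z\neq 0$.
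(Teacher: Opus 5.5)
Your proof is correct and follows the paper's route: normalize so that $f^1+if^2=\int\omega\sim z^{m+1}$ and decide embeddedness through the horizontal projection. The paper simply declares the ``only if'' direction clear, whereas your cyclic-sum and intermediate-value argument for $h(z)=f^0(z)-f^0(\zeta z)$ actually shows that the height $f^0$ cannot separate the $|m+1|$ sheets, so your treatment of necessity is more complete than the paper's.
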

\begin{proof}
  By Lemma~\ref{lem_residue_omega}, we can take a complex coordinate $z$ centered at $p$ such that $\omega=z^mdz$ holds, where $m$ is an integer different from $-1$.
  Then, if we write $f=(f^0,f^1,f^2)$, we have
  \[f^1+if^2=\int\omega=\frac{1}{m+1}z^{m+1}.\]
  Therefore, it is clear that $p$ is embedded if and only if $m=-2\;\textrm{or}\;0$ holds.
\end{proof}

Therefore, we can show the following Osserman-type inequality.

\begin{theorem}[First Osserman-type inequality]\label{thm_first}
  Let $\bar\Sigma^2$ be a compact Riemann surface and $f\col\bar\Sigma^2\setminus\{p_1,\dots,p_n\}\to\I^3$ a weakly complete and finite-type ZMC-face, and let $(g,\omega)$ be the Weierstrass data of $f$.
  We set $k$ as the number of expanding ends of $f$, and suppose that $p_1,\dots,p_k$ are expanding ends and $p_{k+1},\dots,p_n$ are shrinking ends.
  Then, we have
  \begin{align}\label{eq_first_Osserman_type_inequality_intro}
    -\sum_{j=1}^k\ord_{p_j}(\omega)+\sum_{j=k+1}^n\ord_{p_j}(\omega)\ge 2k.
  \end{align}
  The equality of \eqref{eq_first_Osserman_type_inequality_intro} holds if and only if each end $p_j$ is embedded.
  In particular, any zero mean curvature surface represented as the entire graph of a harmonic polynomial attains equality in \eqref{eq_first_Osserman_type_inequality_intro}.
\end{theorem}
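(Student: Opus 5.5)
Near each end $p_j$ I will use a local coordinate $z$ centered at $p_j$. Lemma~\ref{lem_residue_omega} gives $\ord_{p_j}(\omega)=m_j\neq -1$. By Proposition~\ref{prop_equivalent_expanding_shrinking}, $p_j$ is expanding if and only if $m_j\le -2$, and shrinking if and only if $m_j\ge 0$. The inequality \eqref{eq_first_Osserman_type_inequality_intro} is therefore a sum of termwise inequalities. For an expanding end, $-m_j\ge 2$, with equality if and only if $m_j=-2$. For a shrinking end, $m_j\ge 0$, with equality if and only if $m_j=0$. Summing over the $k$ expanding ends and the $n-k$ shrinking ends gives $-\sum_{j\le k}m_j+\sum_{j>k}m_j\ge 2k$.

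Equality holds in the sum if and only if it holds in every term. That means $m_j=-2$ for each expanding end and $m_j=0$ for each shrinking end. Since an expanding end automatically has $m_j\le -2$ and a shrinking end has $m_j\ge 0$, this is the same as saying $m_j\in\{-2,0\}$ for every $j$. By Proposition~\ref{prop_embedded_end}, this holds exactly when every end is embedded.

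For the last claim, let $\phi$ be a harmonic polynomial. Its graph has Weierstrass data $(2\phi_z,dz)$ on $\C$. By Proposition~\ref{prop_finite_type_entire_ZMC} it is finite-type. It is weakly complete because $|dz|^2\le ds^2_{\!\#}$ and $|dz|^2$ is complete on $\C$. Its only end is $z=\infty$, where $\omega=dz$ has order $-2$. So the end is expanding and embedded, and equality holds: both sides equal $2$ with $k=1$.

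No step is a real obstacle, since the argument is bookkeeping once Propositions~\ref{prop_equivalent_expanding_shrinking} and \ref{prop_embedded_end} are available. The one point to state explicitly is that the residue lemma rules out order $-1$; this is what makes the expanding/shrinking dichotomy exhaustive and the termwise bounds sharp.
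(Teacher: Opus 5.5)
Your proof is correct and follows the paper's argument, which simply declares the theorem an immediate consequence of Propositions~\ref{prop_finite_type_entire_ZMC}, \ref{prop_equivalent_expanding_shrinking} and \ref{prop_embedded_end}. You have written out the bookkeeping the paper leaves implicit: the termwise bounds with their equality cases, the weak completeness check for harmonic-polynomial graphs, and $\ord_\infty(dz)=-2$.
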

\begin{proof}
  This is an immediate consequence of Propositions~\ref{prop_finite_type_entire_ZMC}, \ref{prop_equivalent_expanding_shrinking} and \ref{prop_embedded_end}.
\end{proof}

One can construct many examples of weakly complete and finite-type ZMC-faces with embedded ends (cf. Examples~\ref{exam_Enneper_surfaces} and \ref{exam_inverse_Enneper_surfaces}).

\section{Asymptotic behaviors of ends}\label{sec_5}

\subsection{Asymptotic behaviors of embedded ends}

In this subsection, we investigate conditions for an embedded end to be asymptotic to that of a plane, a catenoid, or an Enneper paraboloid.
These ends have particularly simple behaviors among embedded ends.
Throughout this section, we denote by $\D^*:=\{z\in\C\setm 0<|z|<1\}$ the punctured unit disk.

\begin{definition}\label{def_asymptotic_behaviors_end}
  Let $f\col\D^*\to\I^3$ be a finite-type ZMC-face which is weakly complete at $0$, and suppose that $0$ is an embedded end.
  We say that $f$ has a \textit{planar end} (resp. a \textit{catenoidal end}, an \textit{Enneper parabolic end}) at 0
  if there exists a map $\tilde f\col\D^*\to\I^3$ such that
  \begin{itemize}
    \item $\tilde f$ is a piece of a plane \eqref{eq_plane} (resp. a catenoid \eqref{eq_catenoid}, an Enneper paraboloid \eqref{eq_Enneper_paraboloid}) by applying an isometry, a homothetic transformation and a dilation with respect to the $t$-axis, and
    \item we have
      \begin{align}\label{eq_asymptotic_behavior}
        f(z)-\tilde f(z)=o(1),
      \end{align}
      where $o(1)$ means that each component tends to 0 as $z\to 0$.
  \end{itemize}
  In particular, in the case of a catenoidal end, we say that $f$ has an \textit{expanding catenoidal end} (resp. a \textit{shrinking catenoidal end}) at $0$ if the first fundamental form $ds^2$ is complete (resp. not complete) at 0.
\end{definition}

The necessary and sufficient condition for an end to be as in Definition~\ref{def_asymptotic_behaviors_end} can be characterized using the Weierstrass data $(g,\omega)$ and the dual Weierstrass data $(g_*,\omega_*)$ as follows:

\begin{proposition}\label{prop_asymptotic_behaviors}
  Let $f\col\D^*\to\I^3$ be a finite-type ZMC-face which is weakly complete at $0$, and suppose that $0$ is an embedded end.
  Then, the following statements hold.
  \begin{enumerate}[{\rm (i)}]
    \item $f$ has a planar end at $0$ if and only if
    \begin{align}\label{eq_planar_end_condition}
      \ord_0(\omega)=-2\quad\textrm{and}\quad\ord_0(\omega_*)\ge 1
    \end{align}
    hold.
    \item $f$ has an expanding catenoidal end at $0$ if and only if
    \begin{align}\label{eq_expanding_catenoidal_end_condition}
      \ord_0(\omega)=-2\quad\textrm{and}\quad\ord_0(\omega_*)=0
    \end{align}
    hold.
    \item $f$ has a shrinking catenoidal end at $0$ if and only if
    \begin{align}\label{eq_shrinking_catenoidal_end_condition}
      \ord_0(\omega)=0\quad\textrm{and}\quad\ord_0(\omega_*)=-2
    \end{align}
    hold.
    \item $f$ has an Enneper parabolic end at $0$ if and only if
    \begin{align}\label{eq_Enneper_type_end_condition}
      \ord_0(\omega)=-2,\qquad\ord_0(\omega_*)=-2\quad\textrm{and}\quad\Res_0(g\omega)=0
    \end{align}
    hold, where $\Res_0(g\omega)$ is the residue of $g\omega$ at $0$.
  \end{enumerate}
\end{proposition}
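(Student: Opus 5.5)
The plan is to normalize the coordinate using Proposition~\ref{prop_embedded_end}, and then read each end type off the Laurent expansions of the model surfaces. Since $0$ is an embedded end, $\ord_0(\omega)\in\{-2,0\}$. Lemma~\ref{lem_residue_omega} says the residue of $\omega$ vanishes, so we can choose a coordinate $z$ centered at $0$ with $\omega=z^{-2}dz$ (so $g_*=-1/z$) or $\omega=dz$ (so $g_*=z$), after a translation in the $xy$-directions. In this coordinate $f^1+if^2$ is exactly $-1/z$ or $z$, and $f^0=\Re\int g\omega$. The key point is that the $(x,y)$-part of $f$ is then exactly that of a model surface in a suitable coordinate, so the whole asymptotic condition \eqref{eq_asymptotic_behavior} reduces to the $t$-component. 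Before comparing, one has to check that the allowed normalizations of $\tilde f$ (isometry, homothety, $t$-dilation) are compatible with this: rotations and homotheties act on $(x,y)$, and $t\mapsto \pm st+ax+by+t_0$ adds a linear function of $(x,y)$. After reparametrizing $\tilde f$ so that its $(x,y)$-part matches $f$'s, the problem is to decide when $f^0-\tilde f^0=o(1)$ for a model $t$-component $\tilde f^0$.

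Next I would write the model $t$-components in the matched coordinate. In the case $\omega=z^{-2}dz$, set $w=-1/z$, so $w\to\infty$:
\begin{itemize}
\item the plane gives $\tilde f^0=\Re(\alpha w)+c$ with $\alpha\in\C$;
\item the catenoid gives $\tilde f^0=\lambda\log|w|+\Re(\alpha w)+c$ with $\lambda\neq 0$, using the end at $\infty$ of \eqref{eq_catenoid};
\item the Enneper paraboloid gives $\tilde f^0=\Re(\beta w^2)+\Re(\alpha w)+c$ with $\beta\neq0$.
\end{itemize}
In the case $\omega=dz$, $w=z\to0$, and only the catenoid end at $0$ (shrinking) is relevant: $\lambda\log|z|+\Re(\alpha z)+c$. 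Planes and Enneper ends are not possible here, since they are complete in $ds^2$ while this end is not.

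Now expand $f^0=\Re\int g\omega$. Note $\omega_*=dg$, so $\ord_0(\omega_*)$ determines the pole order of $g$: $\ord_0(\omega_*)\ge1$ or $=0$ means $g$ is holomorphic with $g(0)$ arbitrary, and $\ord_0(\omega_*)=-2$ means $g$ has a simple pole. Also, $\ord_0(\omega_*)\neq-1$ is automatic since $dg$ has no residue.

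In the case $\omega=z^{-2}dz$, write $g=\sum a_kz^k$. Then $g\omega=\sum a_k z^{k-2}dz$ and
\[
f^0=\Re\Big(-\tfrac{a_{-1}}{2}z^{-2}-a_0z^{-1}+a_1\log z\Big)+o(1)+c.
\]
The period condition forces $\Re(a_1\cdot 2\pi i)=0$, so $a_1\in\R$ and the log term is $a_1\log|z|=-a_1\log|w|$. Comparing with the models, and noting that $a_{k}$ for $k\le-2$ would give higher powers of $w$ not present in any model:
\begin{itemize}
\item a planar end means $a_{-1}=0$ and $a_1=0$, i.e.\ $g$ is holomorphic with $g'(0)=0$, i.e.\ $\ord_0(dg)\ge1$;
\item an expanding catenoidal end means $a_{-1}=0$ and $a_1\neq0$, i.e.\ $\ord_0(dg)=0$;
\item an Enneper parabolic end means $a_{-1}\neq0$, $g$ has exactly a simple pole, and $a_1=0$, which is $\Res_0(g\omega)=0$.
\end{itemize}
In the case $\omega=dz$, the expansion is $f^0=\Re(a_{-1}\log z+a_0 z+\dots)$, and holomorphy of $g\omega$ is not required at an end. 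The catenoid model needs a nonzero log term and no negative powers, so $g=a_{-1}/z+\text{holomorphic}$ with $a_{-1}\neq0$, and the period condition makes $a_{-1}$ real. This is $\ord_0(\omega_*)=-2$. Conversely, the period condition forces any pole of $g$ of higher order to produce negative powers of $z$, which are excluded.

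The step I expect to be the main obstacle is the rigorous converse direction: showing that if $f-\tilde f=o(1)$ for \emph{some} normalized model map $\tilde f$ with an arbitrary parametrization, then the parametrizations can be matched so that the comparison above is legitimate. I would handle this by using the $(x,y)$-components. From $f^1+if^2-(\tilde f^1+i\tilde f^2)=o(1)$, and since both are proper of degree one near the end, the reparametrization differs from the identity by a map asymptotic to it. Substituting into the smooth (log or polynomial) model expression then changes $\tilde f^0$ only by $o(1)$ plus, in the Enneper case, terms controlled by the linear part; this needs a short estimate. Finally, the distinction between expanding and shrinking is read off Proposition~\ref{prop_equivalent_expanding_shrinking}.
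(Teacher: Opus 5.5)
Your core computation is the paper's. You normalize so that $f^1+if^2$ is exactly $-1/z$ or $z$, expand $f^0=\Re\int g\omega$, use the period condition to get $a_1\in\R$, and match coefficients against the model $t$-components. Your extra linear term $\Re(\alpha w)$ is what the paper removes by an isometry.

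The gap is in the step you single out as the main obstacle. You allow the model an arbitrary parametrization, so its $xy$-part is $\phi=w+\epsilon$ with only $\epsilon=o(1)$. You then claim that substituting changes $\tilde f^0$ by $o(1)$ plus controlled terms. In the Enneper case this is false, since $\Re(\beta\phi^2)-\Re(\beta w^2)=2\Re(\beta w\epsilon)+\Re(\beta\epsilon^2)$ and $w\epsilon$ is only $o(|w|)$.

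Worse, under that reading (iv) itself fails. Take $\omega=z^{-2}dz$ and $g=a_{-1}z^{-1}+a_1z$ with $a_{-1}\neq0$, $a_1\in\R\setminus\{0\}$. Then $f^1+if^2=w:=-1/z$ and $f^0=\Re(\beta w^2)-a_1\log|w|$ with $\beta=-a_{-1}/2$. The choice $\epsilon=-a_1\log|w|/(2\beta w)=o(1)$ gives $\Re(\beta\phi^2)=f^0+o(1)$. So $f$ is $o(1)$-close to a reparametrized piece of an Enneper paraboloid, although the residue $a_1$ of $g\omega$ at $0$ is nonzero.

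The shrinking case has the same defect in milder form. There $f^1+if^2$ is bounded, not proper, and $\phi-z=o(1)$ does not give $\log|\phi|-\log|z|=o(1)$ (take $\epsilon=|z|^{1/2}$). So your substitution does not prove (iii) either. Statement (iii) does survive, for instance because $\tilde f^0\to\pm\infty$ uniformly, which rules out $\ord_0(g\omega)\le-2$.

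The missing idea is that the comparison must be made after a change of \emph{complex} coordinate, with the model in its standard conformal parametrization. The paper does this tacitly when it says ``changing a coordinate on $\D^*$'' and passes to the normal forms (I)--(IV). Then $\epsilon=\phi-w$ is holomorphic on $\D^*$ and tends to $0$, so it extends with $\epsilon(0)=0$ and $w\epsilon=-\epsilon(z)/z\to-\epsilon'(0)$. The cross term therefore only shifts $\tilde f^0$ by a constant, which a translation absorbs, and the $\log|w|$ coefficient $a_1$ is forced to vanish. At a shrinking end, a holomorphic coordinate $\zeta=cz+O(z^2)$ likewise gives $\log|\zeta|=\log|z|+\log|c|+o(1)$. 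If you state this convention and use it in place of your ``short estimate'', the rest of your argument goes through.
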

\begin{proof}
  First suppose that $f=(f^0,f^1,f^2)$ has either planar, expanding catenoidal, shrinking catenoidal or Enneper parabolic end at 0.
  By applying an isometry, a homothetic transformation, a dilation with respect to the $t$-axis to $f$, and changing a coordinate on $\D^*$,
  we may assume that $f$ is asymptotic to one of the following expression.
  Note that $z=re^{i\theta}$ is the polar coordinate.
  \begin{enumerate}[(I)]
    \item\label{item_case_planar_end} In the case of planar end, we have
    \begin{align*}
      f(z)
      &=\left(0,\Re\left(z^{-1}\right),\Im\left(z^{-1}\right)\right)+o(1)\\
      &=\left(0,r^{-1}\cos\theta,-r^{-1}\sin\theta\right)+o(1).
    \end{align*}
    \item\label{item_case_expanding_catenoidal_end} In the case of expanding catenoidal end, we have
    \begin{align*}
      f(z)
      &=\left(\log|z|^{-1},\Re\left(z^{-1}\right),\Im\left(z^{-1}\right)\right)+o(1)\\
      &=\left(-\log r,r^{-1}\cos\theta,-r^{-1}\sin\theta\right)+o(1).
    \end{align*}
    \item\label{item_case_shrinking_catenoidal_end} In the case of shrinking catenoidal end, we have
    \begin{align*}
      f(z)
      &=\bigl(\log|z|,\Re(z),\Im(z)\bigr)+o(1)\\
      &=\bigl(\log r,r\cos\theta,r\sin\theta\bigr)+o(1).
    \end{align*}
    \item\label{item_case_Enneper_type_end} In the case of Enneper parabolic end, we have
    \begin{align*}
      f(z)
      &=\left(\frac{1}{2}\Re\left(z^{-2}\right),\Re\left(z^{-1}\right),\Im\left(z^{-1}\right)\right)+o(1)\\
      &=\left(\frac{1}{2}r^{-2}\cos(2\theta),r^{-1}\cos\theta,-r^{-1}\sin\theta\right)+o(1).
    \end{align*}
  \end{enumerate}
  By the assumption that $f$ has an embedded end at 0, by Proposition~\ref{prop_embedded_end} $\ord_0(\omega)$ is $-2$ or 0.
  Clearly, cases \eqref{item_case_planar_end}, \eqref{item_case_expanding_catenoidal_end}, \eqref{item_case_Enneper_type_end} occur only when $\ord_0(\omega)=-2$, whereas case \eqref{item_case_shrinking_catenoidal_end} occurs only when $\ord_0(\omega)=0$.
  
  Let us consider the cases \eqref{item_case_planar_end}, \eqref{item_case_expanding_catenoidal_end}, \eqref{item_case_Enneper_type_end}.
  We set $m:=\ord_0(g)$, then $g$ and $\omega$ can be expanded around $z=0$ as
  \[g=\sum_{j=m}^\infty a_jz^j,\qquad\omega=\sum_{j=-2}^\infty b_jz^jdz.\]
  By Lemma~\ref{lem_residue_omega}, $b_{-1}=0$ holds.
  Since we have
  \[f^1+if^2=\int\omega=-b_{-2}z^{-1}+o(1),\]
  $b_{-2}=-1$ holds in any of the case \eqref{item_case_planar_end}, \eqref{item_case_expanding_catenoidal_end} or \eqref{item_case_Enneper_type_end}.
  The period condition \eqref{eq_period_condition_ZMC_face} implies that $\Res_0(g\omega)\in\R$, and in particular $a_1\in\R$ when $m=1$.
  Then, $f^0$ can be written as
  \[
    f^0=\Re\int g\omega=\begin{cases}
      o(1) & (m\ge 2),\\
      -a_1\log r+o(1) & (m=1),\\
      -\dfrac{|a_m|}{m-1}r^{m-1}\Bigl(\cos\bigl((m-1)\theta+\alpha\bigr)+o(1)\Bigr) & (m\le 0),
    \end{cases}
  \]
  where $\alpha\in[0,2\pi)$ is a constant.
  Therefore, in case \eqref{item_case_planar_end}, we have $m\ge 2$, namely $\ord_0(\omega_*)=\ord_0(dg)\ge 1$ holds.
  Similarly, case \eqref{item_case_expanding_catenoidal_end} implies that $m=1$, that is, $\ord_0(\omega_*)=0$ holds.
  In case \eqref{item_case_Enneper_type_end}, we have $m=-1$.
  Moreover, in this case, $f^0$ can be written as
  \[f^0=\frac{|a_{-1}|}{2}r^{-2}\cos(2\theta+\alpha)+|a_0|r^{-1}\cos(\theta+\beta)+\Res_0(g\omega)\log r+o(1),\]
  where $\alpha,\beta\in[0,2\pi)$ are constants.
  Therefore, we have $\Res_0(g\omega)=0$.
  
  Next, let us consider the case \eqref{item_case_shrinking_catenoidal_end}.
  As in the previous discussion, we set $m:=\ord_0(g)$.
  Then, $g$ and $\omega$ can be expanded around $z=0$ as
  \[g=\sum_{j=m}^\infty a_jz^j,\qquad\omega=\sum_{j=0}^\infty b_jz^jdz,\]
  and we have $b_0\neq 0$.
  Since we have $\ord_0(g\omega)=m\le -1$ by weak completeness of $f$, we obtain
  \[f^0=\begin{cases}
    a_{-1}b_0\log r+o(1) & (m=-1),\\
    \dfrac{|a_mb_0|}{m+1}r^{m+1}\Bigl(\cos\bigl((m+1)\theta+\alpha\bigr)+o(1)\Bigr) & (m\le -2),
  \end{cases}\]
  where $\alpha\in[0,2\pi)$ is a constant.
  Therefore, we have $m=-1$.
  
  By similar arguments, we can show that the converse is also true.
\end{proof}

\begin{remark}
  In Kato \cite{Kato}, an end satisfying $\ord_0(dF)=-2$ or $-1$ is said to be \textit{simple}, where $F$ is the holomorphic lift of $f$.
  Kato \cite{Kato} also gives a classification of the asymptotic behaviors for simple ends.
\end{remark}

In particular, \eqref{eq_expanding_catenoidal_end_condition} and \eqref{eq_shrinking_catenoidal_end_condition} are in a dual relationship.
Moreover, since we have
\[\Res_0(g\omega)=-\Res_0(g_*\omega_*)\]
by the definition on the dual Weierstrass data $(g_*,\omega_*)$, the condition of \eqref{eq_Enneper_type_end_condition} is in self-dual relationship.
Therefore, we obtain the following propositions.

\begin{proposition}
  Suppose that a ZMC-face $f\col\D^*\to\I^3$ has an expanding catenoidal end {\rm (}resp. a shrinking catenoidal end{\rm)} at $0$.
  Then, the dual map $f_*$ has a shrinking catenoidal end {\rm (}resp. an expanding catenoidal end{\rm)} at $0$.
\end{proposition}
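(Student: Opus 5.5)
The statement should follow from the characterization in Proposition~\ref{prop_asymptotic_behaviors} together with the symmetric relation \eqref{eq_relation_dual_Weierstrass_data} between the Weierstrass data and the dual Weierstrass data. Suppose $f$ has an expanding catenoidal end at $0$. By Proposition~\ref{prop_asymptotic_behaviors}\,(ii) this gives $\ord_0(\omega)=-2$ and $\ord_0(\omega_*)=0$. For $f_*$, the roles of the data are swapped: its Weierstrass data is $(g_*,\omega_*)$, and its dual Weierstrass data is $(g,\omega)$, because $\omega_*=dg$ and $\omega=dg_*$. Hence $f_*$ has $\ord_0(\text{its }\omega)=0$ and $\ord_0(\text{its dual }\omega)=-2$. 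This is exactly condition \eqref{eq_shrinking_catenoidal_end_condition}, so Proposition~\ref{prop_asymptotic_behaviors}\,(iii) should yield a shrinking catenoidal end. The converse case is symmetric and uses (iii)$\Rightarrow$(ii).

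Before applying Proposition~\ref{prop_asymptotic_behaviors} to $f_*$, I must check its hypotheses for $f_*$ near $0$.

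First, $f_*$ must be a ZMC-face on a punctured neighborhood of $0$. Since $\ord_0(\omega_*)=0$, the form $\omega_*=dg$ has no zeros near $0$, so it has none on a smaller punctured disk. Also $g_*\omega_*$ is holomorphic because $g_*$ is holomorphic, and the period condition was shown in Section~\ref{sec_3}. After shrinking $\D^*$ and rescaling the coordinate, $f_*$ is therefore a ZMC-face on $\D^*$. One point needs care: $F_*$ satisfies the second alternative of \eqref{eq_isotropic_null_general}. I will apply the reflection $F^2\mapsto -F^2$ to reduce to the convention used throughout. This reflection changes $(g_*,\omega_*)$ only by conjugation-type sign changes that preserve orders, so the order conditions are unaffected.

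Second, $f_*$ must be finite-type. Finite-type near $0$ amounts to meromorphic extension of the data, and here $g_*=\int\omega$ and $\omega_*=dg$ are meromorphic at $0$.

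Third, $f_*$ must be weakly complete at $0$. The lift-metric of $f_*$ is $(2+|g_*|^2)|\omega_*|^2$. Since $g_*$ has a simple pole at $0$ (from $\ord_0\omega=-2$ and zero residue) and $\omega_*$ is nonvanishing, this metric behaves like $|z|^{-2}|dz|^2$, which is complete.

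Fourth, the end of $f_*$ at $0$ must be embedded. Its ``$\omega$'' has order $0$, so this follows from Proposition~\ref{prop_embedded_end}.

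For the shrinking-to-expanding direction, the analogous checks apply. The hypotheses give $\ord_0\omega=0$ and $\ord_0\omega_*=-2$. Then $\omega_*$ has no zeros near $0$, and $g_*$ is holomorphic with $dg_*=\omega$ nonvanishing, so the lift-metric $(2+|g_*|^2)|\omega_*|^2$ is comparable to $|z|^{-4}|dz|^2$, which is complete. The order of $\omega_*$ is $-2$, so Proposition~\ref{prop_embedded_end} gives embeddedness.

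The main obstacle I expect is the bookkeeping around the reflection convention and confirming that Proposition~\ref{prop_asymptotic_behaviors} applies verbatim to $f_*$. In particular, I need to confirm that the ``dual of the dual'' data of $f_*$ is $(g,\omega)$ up to the harmless sign changes coming from the reflection, so that the order conditions transfer exactly.
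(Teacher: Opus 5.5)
Your proposal is correct and follows the paper's argument. The paper likewise notes that, by \eqref{eq_relation_dual_Weierstrass_data}, the order conditions \eqref{eq_expanding_catenoidal_end_condition} and \eqref{eq_shrinking_catenoidal_end_condition} of Proposition~\ref{prop_asymptotic_behaviors} are interchanged under passing to the dual. Your extra checks fill in hypotheses the paper leaves implicit: that $f_*$, after shrinking the disk, is a weakly complete, finite-type ZMC-face with an embedded end at $0$. On the reflection, it changes nothing at all, since the reflected $f_*$ has Weierstrass data exactly $(g_*,\omega_*)$.
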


\begin{proposition}
  Suppose that a ZMC-face $f\col\D^*\to\I^3$ has an Enneper parabolic end at $0$.
  Then, the dual map $f_*$ has an Enneper parabolic end at $0$.
\end{proposition}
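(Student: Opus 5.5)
The plan is to verify the criterion \eqref{eq_Enneper_type_end_condition} of Proposition~\ref{prop_asymptotic_behaviors}~(iv) for the dual map $f_*$, with $(g_*,\omega_*)$ as its Weierstrass data. The first step is to check that $f_*$ is a legitimate object to which Proposition~\ref{prop_asymptotic_behaviors} applies, after shrinking $\D^*$ to a smaller punctured disk.

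\emph{$f_*$ is a ZMC-face near $0$.} Since $\ord_0(\omega)=-2$, the form $\omega$ has no zeros on a small punctured neighborhood of $0$. Hence $S_f$ does not meet it, and $f_*$ is defined there. By \eqref{eq_relation_dual_Weierstrass_data}, $g_*=\int\omega$ is holomorphic and $\omega_*=dg$. Since $\ord_0(dg)=-2$, $dg$ has no zeros near $0$, so $F_*$ is an immersion and the compatibility condition holds there. The period condition was already verified in Section~\ref{sec_3}. The only formal discrepancy is that $F_*$ satisfies $dF_*^1+i\,dF_*^2=0$ rather than the normalized case. Composing with the reflection in the $tx$-plane fixes this, and it is an isometry. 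It does not affect whether an end is Enneper parabolic, since that notion is defined up to isometries.

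\emph{Weak completeness and finite type at $0$.} The Weierstrass data of $f_*$ extend meromorphically to $0$:
\begin{align*}
  \ord_0(g_*)=-1\quad(\text{because } \ord_0(\omega)=-2 \text{ and } \Res_0(\omega)=0), \qquad \ord_0(\omega_*)=-2.
\end{align*}
The lift-metric $(2+|g_*|^2)|\omega_*|^2$ therefore has a pole of order at least $2$ in $|dz|$ at $0$, so it is complete at $0$. By \eqref{eq_relation_lift_Gauss_curvature_and_Fubini_Study_metric}, the total curvature near $0$ is half the Fubini--Study area swept out by the meromorphic function $g_*/\sqrt{2}$ on a small disk, which is finite. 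Embeddedness of the end of $f_*$ follows from Proposition~\ref{prop_embedded_end}, applied with $\omega_*$ in place of $\omega$, because $\ord_0(\omega_*)=-2$.

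\emph{The three conditions.} The dual Weierstrass data of $f_*$ are $\bigl(\int\omega_*,\,dg_*\bigr)=(g+c,\,\omega)$. Thus the ``$\omega$'' and ``$\omega_*$'' entries of \eqref{eq_Enneper_type_end_condition} for $f_*$ are $\omega_*$ and $\omega$, both of order $-2$. For the residue condition, integrate by parts: $d(gg_*)=g\omega+g_*\omega_*$, and an exact form has zero residue, so
\[
  \Res_0(g_*\omega_*)=-\Res_0(g\omega)=0.
\]
Proposition~\ref{prop_asymptotic_behaviors}~(iv) then gives that $f_*$ has an Enneper parabolic end at $0$.

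The main obstacle is the bookkeeping in the first two steps, namely making sure the hypotheses of Proposition~\ref{prop_asymptotic_behaviors} (ZMC-face on a punctured disk, weak completeness, finite type, embeddedness) genuinely transfer to $f_*$. The orientation convention and the restriction to a smaller punctured disk need to be handled explicitly. Once this is done, the three order and residue conditions are immediate.
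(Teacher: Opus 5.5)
Your proof is correct and follows the paper's route: the paper also obtains this statement from the self-duality of criterion \eqref{eq_Enneper_type_end_condition} in Proposition~\ref{prop_asymptotic_behaviors}(iv), where $\omega$ and $\omega_*$ exchange roles and $\Res_0(g\omega)=-\Res_0(g_*\omega_*)$. Your extra bookkeeping makes explicit the hypotheses the paper leaves implicit: shrinking the disk to avoid singular points, reflecting in the $tx$-plane to fix the sign convention, and checking that $f_*$ is weakly complete, finite-type and has an embedded end at $0$.
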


\subsection{Asymptotic behaviors of shrinking ends}\label{subsec_5_2}

For an expanding end, it follows immediately from \eqref{eq_asymptotic_behavior} that the end is embedded.
On the other hand, there exists a ZMC-face whose end is asymptotic to a shrinking end of a catenoid, but is not embedded.
For example, the ZMC-face determined by the Weierstrass data $(g,\omega)=(1/z^2,2zdz)$ on $\C$, namely the double covering of a catenoid, is such an example.
With this in mind, we introduce the following definition.

\begin{definition}\label{def_layered_shrinking_catenoidal}
  Let $f\col\D^*\to\I^3$ be a finite-type ZMC-face which is weakly complete at $0$.
  We say that $f$ has a \textit{layered shrinking catenoidal end} at 0
  if it satisfies the condition for a shrinking catenoidal end in Definition~\ref{def_asymptotic_behaviors_end}, except that $f$ need not be embedded at $0$.
\end{definition}

\begin{proposition}\label{prop_layered_shrinking_catenoidal_end}
  Let $f\col\D^*\to\I^3$ be a finite-type ZMC-face which is weakly complete at $0$.
  Then, $f$ has a layered shrinking catenoidal end at $0$ if and only if
  \begin{align}\label{eq_shrinking_catenoidal_end_condition_non_embedded}
    \ord_0(\omega)\ge 0\quad\textrm{and}\quad\ord_0(g\omega)=-1
  \end{align}
  hold.
\end{proposition}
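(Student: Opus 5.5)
We prove both directions by the same Laurent-expansion analysis as in case (III) of the proof of Proposition~\ref{prop_asymptotic_behaviors}. The only new feature is that $\ord_0(\omega)$ may now be any nonnegative integer, because embeddedness is no longer assumed.

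\emph{Necessity.} Suppose $f$ has a layered shrinking catenoidal end at $0$. After an isometry, homothety, $t$-dilation and a change of coordinate on $\D^*$, we may assume
\[
f(z)=\bigl(\log|\zeta(z)|,\Re\zeta(z),\Im\zeta(z)\bigr)+o(1)
\]
for some parametrization $\zeta$ of the model catenoid end. Since the horizontal part of $f$ is $f^1+if^2=\int\omega$ and tends to a finite limit, $\omega$ has no pole at $0$. Indeed, by Lemma~\ref{lem_residue_omega} we have $\ord_0(\omega)\neq-1$, and $\ord_0(\omega)\le-2$ would make $|f^1+if^2|\to\infty$. Hence $\ord_0(\omega)\ge0$, and the end is shrinking by Proposition~\ref{prop_equivalent_expanding_shrinking}. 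Weak completeness at $0$ then forces $|g\omega|^2$ to be complete at $0$, as in the proof of Proposition~\ref{prop_Huber_Osserman}. So $\ell:=\ord_0(g\omega)\le-1$.

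Write $g\omega=\sum_{j\ge\ell}c_jz^jdz$ with $c_\ell\neq0$. The period condition gives $\Res_0(g\omega)=c_{-1}\in\R$. Then
\[
f^0=\Re\int g\omega=
\begin{cases}
c_{-1}\log r+O(1) & (\ell=-1),\\[2pt]
\dfrac{|c_\ell|}{\ell+1}\,r^{\ell+1}\Bigl(\cos\bigl((\ell+1)\theta+\alpha\bigr)+o(1)\Bigr) & (\ell\le-2),
\end{cases}
\]
for some constant $\alpha$. In the model, $f^0$ stays bounded along the circle $|z|=r$ as $\theta$ varies, since $\log|\zeta|$ depends only on $|\zeta|$ up to $o(1)$, and $|\zeta|\asymp r^{m+1}$ where $m=\ord_0\omega$. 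For $\ell\le-2$, however, the oscillating term of size $r^{\ell+1}\to\infty$ changes sign in $\theta$. This is incompatible with the model, so $\ell=-1$.

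\emph{Sufficiency.} Suppose $\ord_0(\omega)=m\ge0$ and $\ord_0(g\omega)=-1$. Choose a coordinate with $\omega=z^m\,dz$, which is possible as in Proposition~\ref{prop_embedded_end}. Set
\[
\zeta:=\frac{z^{m+1}}{m+1},
\]
so that $f^1+if^2=\zeta+\text{const}$. Write $c:=\Res_0(g\omega)\in\R\setminus\{0\}$; it is real by the period condition and nonzero because $\ord_0(g\omega)=-1$. Then
\[
f^0=c\log|z|+\Re h(z)+\text{const},
\]
where $h$ is holomorphic near $0$ with $h(0)=0$, so $\Re h=o(1)$. Moreover $\log|z|=\frac{1}{m+1}\log|\zeta|+\text{const}$. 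Hence, after translation and a dilation (with a reflection if $c<0$) along the $t$-axis by the factor $c/(m+1)$,
\[
f=\tilde f\circ\zeta+o(1),
\]
where $\tilde f$ is the catenoid \eqref{eq_catenoid} written in the variable $\zeta$. Composing with $\zeta$, which is the allowed change of the map $\tilde f$ on $\D^*$, gives the shrinking catenoidal asymptotics of Definition~\ref{def_asymptotic_behaviors_end}, with embeddedness dropped.

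\emph{Main obstacle.} The delicate point is interpreting Definition~\ref{def_layered_shrinking_catenoidal} when $m\ge1$. The comparison map $\tilde f$ must then be allowed to be a $(m+1)$-fold covering of the catenoid end, i.e.\ $\tilde f\circ\zeta$. I would state explicitly that "a piece of a catenoid" permits such a reparametrization, since this is exactly how the double-cover example $(1/z^2,2z\,dz)$ is meant to qualify. In the necessity direction, I would rule out $\ell\le-2$ via the sign change of $\cos\bigl((\ell+1)\theta+\alpha\bigr)$ on small circles, versus the bounded angular oscillation of $\log|\zeta|+o(1)$.
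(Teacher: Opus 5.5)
Your proof follows the paper's argument. Shrinking gives $\ord_0(\omega)\ge0$, weak completeness forces $\ord_0(g\omega)\le-1$, and the Laurent expansion of $f^0=\Re\int g\omega$ excludes $\ord_0(g\omega)\le-2$. Your sufficiency direction writes out what the paper only calls easy. Your reading that the comparison map may be the $(m+1)$-fold cover $\tilde f\circ\zeta$, with $\zeta=z^{m+1}/(m+1)$, is the intended one; it is what makes the double-cover example $(1/z^2,2z\,dz)$ qualify.

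One step of the necessity part is justified incorrectly. In your normalization, $f-\tilde f=o(1)$ only gives $\zeta=(f^1+if^2)+o(1)$. An error that is merely $o(1)$ is not controlled at the scale $r^{m+1}$. So neither $|\zeta|\asymp r^{m+1}$ nor the ``bounded angular oscillation'' of $\log|\zeta|$ on small circles follows.

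You do not need either claim. Since $\ord_0(\omega)\ge0$, the horizontal part of $f$ is bounded near $0$, and hence so is $\zeta$. Therefore $\log|\zeta|$ is bounded above, and $f^0=\log|\zeta|+o(1)$ is bounded above near $0$. For $\ell\le-2$, however, the leading term $\frac{|c_\ell|}{\ell+1}r^{\ell+1}\cos((\ell+1)\theta+\alpha)$ takes arbitrarily large positive values on every small circle. This is a contradiction, so $\ell=-1$.

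Relatedly, take $\ord_0(\omega)\ge0$ directly from the non-completeness of $ds^2$, which is built into the definition, via Proposition~\ref{prop_equivalent_expanding_shrinking}, as the paper does. Do not derive it from a boundedness of the horizontal part that you have not yet established.
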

\begin{proof}
  This proposition can be shown by using the same discussion in the proof of Theorem~\ref{prop_asymptotic_behaviors}.
  Suppose that $f=(f^0,f^1,f^2)$ has a layered shrinking catenoidal end at $0$.
  Since $0$ is a shrinking end, we have $\ord_0(\omega)\ge 0$.
  In particular, $f^1$ and $f^2$ converge to finite values as $z\to 0$, which implies that we do not need to take them into account when considering the asymptotic behavior.
  We set $m:=\ord_0(g)$ and $l:=\ord_0(\omega)$, then $g$ and $\omega$ can be expanded around $z=0$ as
  \[g=\sum_{j=m}^\infty a_jz^j,\qquad\omega=\sum_{j=l}^\infty b_jz^jdz.\]
  By weak completeness of $f$, we have $\ord_0(g\omega)=m+l\le -1$.
  Then, $f^0$ can be written as
  \[f^0=\begin{cases}
      a_mb_l\log r+o(1) & (m+l=-1),\\
      \dfrac{|a_mb_l|}{m+l+1}r^{m+l+1}\Bigl(\cos\bigl((m+l+1)\theta+\alpha\bigr)+o(1)\Bigr) & (m+l\le -2),
    \end{cases}\]
  where $\alpha\in[0,2\pi)$ is a constant.
  Therefore, we have $m+l=-1$.
  The converse can also be shown easily.
\end{proof}

\section{Proof of the Osserman-type inequalities}\label{sec_6}

In this section, we show the second and third Osserman-type inequalities.
We first state the second Osserman-type inequality.

\begin{theorem}[Second Osserman-type inequality]\label{thm_second}
  In addition to the setting of Theorem~\ref{thm_third}, we denote by $S_f=\{q_1,\dots,q_m\}$ the set of singular points of $f$ and we set $d_j:=\ord_{q_j}(\omega)$ for each $j=1,\dots,m$.
  Then, the Gauss map $g\col\bar\Sigma^2\to\C\cup\{\infty\}$ and the dual Gauss map $g_*\col\bar\Sigma^2\to\C\cup\{\infty\}$ of $f$ satisfy
  \begin{align}\label{eq_second_Osserman_type_inequality_intro}
    \deg(g)+\deg(g_*)\ge n+d_1+\dots+d_m.
  \end{align}
  The equality of \eqref{eq_second_Osserman_type_inequality_intro} holds if and only if each end $p_j$ is either planar, expanding catenoidal, or shrinking catenoidal.
  In particular, if $f$ is represented as an entire graph, then the equality condition of \eqref{eq_second_Osserman_type_inequality_intro} is equivalent that the image of $f$ is a plane in $\I^3$.
\end{theorem}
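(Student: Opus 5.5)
Both degrees can be computed by counting poles, and the inequality then becomes a sum of local estimates at the ends. I take the setting of Theorem~\ref{thm_third}, with $p_1,\dots,p_k$ expanding and $p_{k+1},\dots,p_n$ shrinking, and write $\omega=dg_*$, $\omega_*=dg$ as in \eqref{eq_relation_dual_Weierstrass_data}. By Proposition~\ref{prop_Huber_Osserman}, $g$ and $\omega$ extend meromorphically to $\bar\Sigma^2$. Since $\omega$ is holomorphic on $\Sigma^2$ with zero residues at the ends (Lemma~\ref{lem_residue_omega}), $g_*=\int\omega$ also extends meromorphically to $\bar\Sigma^2$. It is holomorphic on $\Sigma^2$, so its poles lie only at expanding ends. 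At an expanding end $\ord_{p_j}(g_*)=\ord_{p_j}(\omega)+1\le -1$. Hence
\[\deg(g_*)=\sum_{j=1}^k\bigl(-\ord_{p_j}(\omega)-1\bigr).\]

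Next I count the poles of $g$. By Proposition~\ref{prop_first_second_fundamental_form_ZMC_face} and the compatibility condition, the poles of $g$ in $\Sigma^2$ are exactly the singular points $q_i$, with $\ord_{q_i}(g)=-d_i$, since $g\omega$ is holomorphic and nonvanishing there. At an expanding end I use only $\ord_{p_j}(g)\ge$ anything, so the pole contribution is $\max(0,-\ord_{p_j}(g))\ge 0$. At a shrinking end, weak completeness forces $|g\omega|^2$ to be complete, as in the proof of Proposition~\ref{prop_Huber_Osserman}, so $\ord(g\omega)\le -1$. Since $\ord(\omega)\ge 0$, this gives $-\ord_{p_j}(g)\ge \ord_{p_j}(\omega)+1\ge 1$. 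Summing,
\[\deg(g)\ge d_1+\dots+d_m+\sum_{j=k+1}^n\bigl(\ord_{p_j}(\omega)+1\bigr).\]

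Adding the two counts, the inequality \eqref{eq_second_Osserman_type_inequality_intro} reduces to showing, end by end, that each expanding end contributes at least $1$ and each shrinking end contributes at least $1$. A shrinking end already contributes $-\ord_{p_j}(g)\ge\ord(\omega)+1\ge1$, with equality iff $\ord(\omega)=0$ and $\ord(g\omega)=-1$, i.e.\ $\ord(\omega)=0$ and $\ord(\omega_*)=-2$. An expanding end contributes $(-\ord(\omega)-1)+\max(0,-\ord(g))\ge 1$, since $\ord(\omega)\le-2$; equality holds iff $\ord(\omega)=-2$ and $g$ is holomorphic at $p_j$.

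It remains to match these equality cases with Proposition~\ref{prop_asymptotic_behaviors}. The shrinking case is exactly \eqref{eq_shrinking_catenoidal_end_condition}. This is the step I expect to be the main obstacle. Holomorphy of $g$ alone gives $\ord(dg)\ge0$ or possibly $g$ constant near the end. If $\ord(\omega_*)\ge1$ we get a planar end, and if $\ord(\omega_*)=0$ an expanding catenoidal end, by (i) and (ii). The apparent gap is $g$ holomorphic with $dg$ vanishing to order $\ge1$, but that is precisely condition \eqref{eq_planar_end_condition}, so it is planar and covered. The remaining point to check is that, when $\ord_0(g)\ge 0$ with $dg$ nonvanishing, the end is catenoidal only if the residue condition from the proof of Proposition~\ref{prop_asymptotic_behaviors} holds. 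Here I would use the fact that after subtracting the constant $g(p_j)$ (an isometry, by the remark on congruence) we reduce to $\ord(g)=1$. For the converse, each listed end type forces the order counts to be equalities, again by Proposition~\ref{prop_asymptotic_behaviors}.

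For the entire graph case, $(g,\omega)=(2\phi_z,dz)$ on $\C$ has one expanding end at $\infty$ with $\ord(\omega)=-2$ and no singular points. Equality requires $g=2\phi_z$, a polynomial, to be holomorphic at $\infty$, hence constant. Then $\phi$ is affine and, up to an isometry \eqref{eq_isometry}, the image is a plane.
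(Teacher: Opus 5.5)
Your argument is correct and is essentially the paper's proof. You count poles of $g$, which occur at the singular points and at shrinking ends, where weak completeness forces $\ord_{p_j}(g\omega)\le -1$, and poles of $g_*$, which occur only at expanding ends; your per-end bookkeeping with the exact value of $\deg(g_*)$ just repackages Corollary~\ref{cor_deg_g} and Proposition~\ref{prop_deg_dual_g}, yields the same equality conditions, and matches them with Proposition~\ref{prop_asymptotic_behaviors}. The ``main obstacle'' you flag is not one: Proposition~\ref{prop_asymptotic_behaviors}(ii) carries no residue hypothesis, because the residue of $g\omega$ at $p_j$ is automatically real by the period condition and a vanishing residue only matters for Enneper parabolic ends, so you can cite (i)--(iii) directly once you note that $\ord_{p_j}(\omega)\in\{-2,0\}$ makes each end embedded.
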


We set $\Sigma^2:=\bar\Sigma^2\setminus\{p_1,\dots,p_n\}$.
Throughout this section, we use the notations of Theorems~\ref{thm_third} and \ref{thm_second}, and suppose that $p_1,\dots,p_k$ are expanding ends and $p_{k+1},\dots,p_n$ are shrinking ends.
To prove the theorems, we prepare the following lemma.

\begin{lemma}\label{lem_deg_g_detail}
  The Gauss map $g$ satisfies
  \begin{align}\label{ineq_deg_g_detail}
    \deg(g)\ge n-k+\sum_{j=k+1}^n\ord_{p_j}(\omega)+\sum_{j=1}^md_j.
  \end{align}
  The equality of \eqref{ineq_deg_g_detail} holds if and only if 
  \begin{itemize}
    \item $\ord_{p_j}(\omega_*)\ge 0$ holds for each expanding end $p_j$, and
    \item $\ord_{p_j}(g\omega)=-1$ holds for each shrinking end $p_j$.
  \end{itemize}
\end{lemma}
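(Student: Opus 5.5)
The plan is to compute $\deg(g)$ as the total number of poles of $g$ on $\bar\Sigma^2$, counted with multiplicity, i.e.\ $\deg(g)=\sum_{q}\max\{0,-\ord_q(g)\}$. Poles of $g$ can occur only at singular points $q_j$ and at the ends $p_j$, since on $\Sigma^2\setminus S_f$ the compatibility condition forces $g$ to be holomorphic. So we estimate the pole order of $g$ at each of these points and then sum.

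\emph{Singular points.} At $q_j$ the compatibility condition says that $g\omega$ is holomorphic and nonvanishing while $\ord_{q_j}(\omega)=d_j\ge 1$. Hence $\ord_{q_j}(g)=-d_j$, and each singular point contributes exactly $d_j$.

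\emph{Shrinking ends.} Let $p_j$ with $j\ge k+1$ be shrinking. By Proposition~\ref{prop_equivalent_expanding_shrinking}, $\ord_{p_j}(\omega)\ge 0$. Weak completeness of $ds^2_{\!\#}=(2+|g|^2)|\omega|^2$ at $p_j$ requires that this metric blow up at least like $|z|^{-2}|dz|^2$. Since $|\omega|^2$ alone is not complete there, we need $\ord_{p_j}(g\omega)\le -1$; this is exactly the argument in the proof of Proposition~\ref{prop_Huber_Osserman}. Therefore $-\ord_{p_j}(g)\ge 1+\ord_{p_j}(\omega)$, with equality if and only if $\ord_{p_j}(g\omega)=-1$. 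In particular $g$ has a pole there, of order at least $1+\ord_{p_j}(\omega)$.

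\emph{Expanding ends.} At $p_j$ with $j\le k$, the pole order of $g$ is at least $0$. Equality, meaning $g$ is holomorphic at $p_j$, is equivalent to $\ord_{p_j}(dg)\ge 0$, i.e.\ $\ord_{p_j}(\omega_*)\ge 0$. Indeed, if $g$ has a pole of order $m\ge1$, then $dg$ has order $-m-1\le -2<0$; and if $g$ is holomorphic, then $dg$ is holomorphic.

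Summing the three contributions gives $\deg(g)\ge \sum_{j=1}^m d_j+\sum_{j=k+1}^n\bigl(1+\ord_{p_j}(\omega)\bigr)$, which is \eqref{ineq_deg_g_detail}. Equality holds precisely when equality holds at every end, which yields the stated characterization.

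The only delicate step is the shrinking-end bound $\ord_{p_j}(g\omega)\le -1$, since the lift-metric is only complete, not necessarily of the form $|g\omega|^2$. I would handle it as follows. Near $p_j$, $ds^2_{\!\#}$ is comparable to $\bigl(|\omega|+|g\omega|\bigr)^2$. The term $|\omega|^2$ is bounded because $\ord_{p_j}(\omega)\ge 0$. So completeness forces $|g\omega|^2$ to be complete, which by the order criterion (\cite[Lemma~9.6]{Osserman1986}) means $\ord_{p_j}(g\omega)\le -1$.
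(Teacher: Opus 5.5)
Your proposal is correct and follows essentially the same route as the paper: you write $\deg(g)$ as the total pole order of $g$, get exactly $d_j$ at each singular point from the compatibility condition, at least $1+\ord_{p_j}(\omega)$ at each shrinking end from weak completeness (with equality iff $\ord_{p_j}(g\omega)=-1$), and at least $0$ at each expanding end (with equality iff $g$ is holomorphic there, i.e.\ $\ord_{p_j}(\omega_*)\ge 0$). Your extra justification of $\ord_{p_j}(g\omega)\le -1$ is fine. It can be stated most directly as follows: since $g\omega$ is meromorphic at $p_j$, if it were holomorphic there then $ds^2_{\!\#}\le C|dz|^2$ near $p_j$, and a radial segment into $p_j$ would have finite length, contradicting weak completeness.
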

\begin{proof}
  $\deg(g)$ is equal to the number of the poles, counting multiplicities.
  Since $g$ is holomorphic on $\Sigma^2\setminus S_f$ and $\ord_{q_j}(g)=-\ord_{q_j}(\omega)$ holds for each singular point $q_j$, we have
  \begin{align}\label{eq_mappnig_degree}
    \deg(g)=\sum_{j=1}^n\max\left(0,-\ord_{p_j}(g)\right)+\sum_{j=1}^md_j.
  \end{align}
  Considering expanding ends $p_1,\dots,p_k$, we clearly have
  \begin{align}\label{ineq_deg_g_expanding}
    \sum_{j=1}^k\max\left(0,-\ord_{p_j}(g)\right)\ge 0,
  \end{align}
  and the equality holds if and only if $g$ does not have a pole at each expanding end $p_j$, that is, $\ord_{p_j}(\omega_*)=\ord_{p_j}(dg)\ge 0$ holds.
  Moreover, for shrinking ends $p_{k+1},\dots,p_n$, weak completeness of $f$ implies that $g\omega$ has a pole at these points.
  Since $\omega$ is holomorphic at shrinking ends, we have
  \[-\ord_{p_j}(g)\ge 1+\ord_{p_j}(\omega)\ge 1.\]
  Therefore, we obtain
  \begin{align}\label{ineq_deg_g_shrinking}
    \sum_{j=k+1}^n\max\left(0,-\ord_{p_j}(g)\right)=-\sum_{j=k+1}^n\ord_{p_j}(g)\ge n-k+\sum_{j=k+1}^n\ord_{p_j}(\omega),
  \end{align}
  and the equality holds if and only if $g\omega$ has a simple pole at shrinking ends.
  Therefore, by combining inequalities \eqref{ineq_deg_g_expanding} and \eqref{ineq_deg_g_shrinking} with \eqref{eq_mappnig_degree}, we obtain the assertion.
\end{proof}

\begin{corollary}\label{cor_deg_g}
  The Gauss map $g$ satisfies
  \begin{align}\label{ineq_deg_g}
    \deg(g)\ge n-k+\sum_{j=1}^md_j.
  \end{align}
  The equality of \eqref{ineq_deg_g} holds if and only if 
  \begin{itemize}
    \item $\ord_{p_j}(\omega_*)\ge 0$ holds for each expanding end $p_j$, and
    \item $\ord_{p_j}(\omega)=0$ and $\ord_{p_j}(\omega_*)=-2$ hold for each shrinking end $p_j$.
  \end{itemize}
\end{corollary}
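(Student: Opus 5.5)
Corollary~\ref{cor_deg_g} follows from Lemma~\ref{lem_deg_g_detail} once we bound the middle sum there from below. By Proposition~\ref{prop_equivalent_expanding_shrinking}, $\ord_{p_j}(\omega)\ge 0$ at every shrinking end $p_j$ ($j=k+1,\dots,n$). Hence
\[\sum_{j=k+1}^n\ord_{p_j}(\omega)\ge 0,\]
and inserting this into \eqref{ineq_deg_g_detail} gives \eqref{ineq_deg_g}. Equality in \eqref{ineq_deg_g} holds if and only if both of the following hold:
\begin{enumerate}[{\rm (a)}]
  \item equality holds in \eqref{ineq_deg_g_detail};
  \item $\ord_{p_j}(\omega)=0$ for each shrinking end.
\end{enumerate}

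It remains to translate this into the stated conditions. At the expanding ends, the equality condition of Lemma~\ref{lem_deg_g_detail} is already $\ord_{p_j}(\omega_*)\ge 0$, which is the first bullet. At a shrinking end, the conditions from (a) and (b) read
\[\ord_{p_j}(\omega)=0,\qquad \ord_{p_j}(g\omega)=-1.\]
These are equivalent to $\ord_{p_j}(g)=-1$ together with $\ord_{p_j}(\omega)=0$. Since $\omega_*=dg$ and $g$ has a pole of order $1$, this is in turn equivalent to $\ord_{p_j}(\omega_*)=-2$ with $\ord_{p_j}(\omega)=0$.

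For the converse, suppose $\ord_{p_j}(\omega)=0$ and $\ord_{p_j}(\omega_*)=-2$. Then $\ord_{p_j}(dg)=-2$, and this forces $\ord_{p_j}(g)=-1$: a pole of order $-\ord(g)$ produces $\ord(dg)=\ord(g)-1$, and a holomorphic $g$ would make $dg$ holomorphic. Thus $\ord_{p_j}(g\omega)=-1$, and (a) and (b) hold.

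No step here is a real obstacle. The only care needed is the order bookkeeping $\ord(dg)=\ord(g)-1$ for poles, together with the observation that $\ord(dg)=-2$ cannot arise when $g$ is holomorphic.
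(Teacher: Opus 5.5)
Your proposal is correct and follows the paper's proof essentially verbatim. You drop the nonnegative term $\sum_{j=k+1}^n\ord_{p_j}(\omega)$ from Lemma~\ref{lem_deg_g_detail}, so equality requires equality there plus $\ord_{p_j}(\omega)=0$ at each shrinking end, and you then convert $\ord_{p_j}(g\omega)=-1$ into $\ord_{p_j}(g)=-1$, i.e.\ $\ord_{p_j}(\omega_*)=-2$, exactly as the paper does; your explicit check that $\ord(dg)=-2$ forces a simple pole of $g$ spells out the converse the paper leaves implicit.
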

\begin{proof}
  \eqref{ineq_deg_g} follows immediately from Lemma~\ref{lem_deg_g_detail}.
  The equality condition of \eqref{ineq_deg_g} is given by that of \eqref{ineq_deg_g_detail} together with $\ord_{p_j}(\omega)=0$ for each shrinking end $p_j$.
  For shrinking ends, this is equivalent to $\ord_{p_j}(\omega)=0$ and $\ord_{p_j}(g)=-1$, that is, $\ord_{p_j}(\omega_*)=-2$.
\end{proof}

On the other hand, we obtain the following inequality for the degree of the dual Gauss map $g_*$.

\begin{proposition}\label{prop_deg_dual_g}
  The dual Gauss map $g_*$ satisfies
  \begin{align}\label{ineq_deg_dual_g}
    \deg(g_*)\ge k.
  \end{align}
  The equality of \eqref{ineq_deg_dual_g} holds if and only if each expanding end $p_j$ is embedded, namely $\ord_{p_j}(\omega)=-2$ holds.
\end{proposition}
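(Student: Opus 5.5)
The plan is to compute $\deg(g_*)$ as the number of poles of $g_*$ on $\bar\Sigma^2$, counted with multiplicity, and to read these poles off from the orders of $\omega$ at the ends.

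First, $g_*=\int\omega$ is well defined on $\Sigma^2$. The period condition in Proposition~\ref{prop_representation_formula_ZMC_face} gives $\oint_\gamma\omega=0$ for every closed curve, so $g_*$ is a single-valued holomorphic function on $\Sigma^2$. By Proposition~\ref{prop_Huber_Osserman}, $\omega$ extends meromorphically to $\bar\Sigma^2$, so $g_*$ extends meromorphically as well. Hence $g_*\col\bar\Sigma^2\to\C\cup\{\infty\}$ is a meromorphic function whose poles can only lie at the ends $p_1,\dots,p_n$.

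Next we determine the pole order at each end.
\begin{itemize}
\item At a shrinking end $p_j$ ($j>k$), Proposition~\ref{prop_equivalent_expanding_shrinking} gives $\ord_{p_j}(\omega)\ge 0$. So $\omega$ is holomorphic at $p_j$, $g_*$ is bounded there, and $p_j$ contributes no pole.
\item At an expanding end $p_j$ ($j\le k$), Proposition~\ref{prop_equivalent_expanding_shrinking} gives $\ord_{p_j}(\omega)\le -2$. Lemma~\ref{lem_residue_omega} says the residue vanishes, so in a local coordinate $z$ centred at $p_j$, integrating the Laurent series termwise gives $\ord_{p_j}(g_*)=\ord_{p_j}(\omega)+1\le -1$.
\end{itemize}
Thus $p_j$ is a pole of $g_*$ of order $-\ord_{p_j}(\omega)-1\ge 1$.

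Summing over all ends gives
\[\deg(g_*)=\sum_{j=1}^k\bigl(-\ord_{p_j}(\omega)-1\bigr)\ge k.\]
Equality holds if and only if $\ord_{p_j}(\omega)=-2$ for every expanding end. Since $\ord_{p_j}(\omega)\le -2$ at such ends, Proposition~\ref{prop_embedded_end} shows this is equivalent to every expanding end being embedded.

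One edge case needs attention: the degree count assumes $g_*$ is non-constant. This holds because $\omega\not\equiv0$, since $F^1$ is non-constant in Definition~\ref{def_ZMC_face}. If $k=0$, the argument shows $g_*$ would be a holomorphic function on the compact surface $\bar\Sigma^2$, hence constant, contradicting $\omega\not\equiv 0$. So $k\ge1$ automatically, and the inequality is not vacuous.

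The only delicate point is the single-valuedness of $g_*$, which requires the full complex period $\oint\omega=0$ rather than only its real part. This is exactly what the period condition provides; everything else is routine bookkeeping.
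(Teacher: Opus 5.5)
Your proposal is correct and follows essentially the same route as the paper. Poles of $g_*=\int\omega$ can only occur at the expanding ends, where $\ord_{p_j}(\omega)\le -2$ together with the vanishing residue gives a pole of order $-\ord_{p_j}(\omega)-1\ge 1$, and the equality case is read off via Proposition~\ref{prop_embedded_end}. You additionally spell out bookkeeping the paper leaves implicit: single-valuedness from the full complex period, the exact pole order, and $k\ge 1$, which the paper proves separately in Lemma~\ref{lem_at_least_one_expanding_ends}.
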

\begin{proof}
  Since we have $\ord_{p_j}(\omega)\le-2$ for each expanding end $p_j$ by Proposition~\ref{prop_equivalent_expanding_shrinking}, the dual Gauss map $g_*$ has a pole at $p_j$.
  Therefore, we obtain \eqref{ineq_deg_dual_g}.
  Since $g_*$ is holomorphic on $\bar\Sigma^2\setminus\{p_1,\dots,p_k\}$, the equality condition is clear by Proposition~\ref{prop_embedded_end}.
\end{proof}

With these preparations, we now prove Theorems~\ref{thm_second} and \ref{thm_third}.

\begin{proof}[Proof of Theorem~\ref{thm_second}]
  By combining Corollary~\ref{cor_deg_g} and Proposition~\ref{prop_deg_dual_g}, we obtain \eqref{eq_second_Osserman_type_inequality_intro},
  and the equality of \eqref{eq_second_Osserman_type_inequality_intro} holds if and only if 
  \begin{itemize}
    \setlength{\leftskip}{-1em}
    \item $\ord_{p_j}(\omega)=-2$ and $\ord_{p_j}(\omega_*)\ge 0$ hold for each expanding end $p_j$, and
    \item $\ord_{p_j}(\omega)=0$ and $\ord_{p_j}(\omega_*)=-2$ hold for each shrinking end $p_j$.
  \end{itemize}
  By Proposition~\ref{prop_asymptotic_behaviors}, the above condition means that each end of $f$ is either planar, expanding catenoidal, or shrinking catenoidal.
  In the case where $f$ is represented as an entire graph, the equality condition follows immediately from Proposition~\ref{prop_finite_type_entire_ZMC}.
\end{proof}

\begin{proof}[Proof of Theorem~\ref{thm_third}]
  Applying the Riemann-Roch theorem to $\omega$ on $\bar\Sigma^2$, we have
  \begin{align}\label{eq_Riemann_Roch_omega}
    \sum_{j=1}^n\ord_{p_j}(\omega)+\sum_{j=1}^md_j=-\chi(\bar\Sigma^2).
  \end{align}
  By \eqref{ineq_deg_g_detail} and \eqref{eq_Riemann_Roch_omega}, we obtain
  \begin{align}\label{ineq_deg_g_detail_2}
    \deg(g)\ge n-k-\chi(\bar\Sigma^2)-\sum_{j=1}^k\ord_{p_j}(\omega).
  \end{align}
  By Proposition~\ref{prop_equivalent_expanding_shrinking}, we have $\ord_{p_j}(\omega)\le-2$ for each $j=1,\dots,k$.
  Combining this with \eqref{ineq_deg_g_detail_2} yields \eqref{eq_third_Osserman_type_inequality_intro}.
  Moreover, the equality of \eqref{eq_third_Osserman_type_inequality_intro} holds if and only if 
  \begin{itemize}
    \setlength{\leftskip}{-1em}
    \item $\ord_{p_j}(\omega)=-2$ and $\ord_{p_j}(\omega_*)\ge 0$ hold for each expanding end $p_j$, and
    \item $\ord_{p_j}(g\omega)=-1$ holds for each shrinking end $p_j$.
  \end{itemize}
  By Propositions~\ref{prop_asymptotic_behaviors} and \ref{prop_layered_shrinking_catenoidal_end}, the above condition means that each end of $f$ is either planar, expanding catenoidal, or layered shrinking catenoidal.
  The argument in the case where $f$ can be represented as an entire graph the same as in the proof of Theorem~\ref{thm_second}.
\end{proof}

\section{The maximal number of omitted values of Gauss maps}\label{sec_7}

In this section, as an application of the third Osserman-type inequality \eqref{eq_third_Osserman_type_inequality_intro}, we give an estimate for the maximal number of omitted values of Gauss maps for weakly complete and finite-type ZMC-faces.
First, we show the following lemma.

\begin{lemma}\label{lem_at_least_one_expanding_ends}
  A weakly complete and finite-type ZMC-face has at least one expanding end.
\end{lemma}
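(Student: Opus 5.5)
We need to show that a weakly complete, finite-type ZMC-face has at least one expanding end, i.e.\ some $p_j$ with $\ord_{p_j}(\omega)\le -2$. We argue by contradiction, using the dual Gauss map $g_*=\int\omega$.

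Suppose every end is shrinking. By Proposition~\ref{prop_equivalent_expanding_shrinking}, $\ord_{p_j}(\omega)\ge 0$ at each end $p_j$. On $\Sigma^2$ the form $\omega$ is holomorphic by definition. By Proposition~\ref{prop_Huber_Osserman} it extends meromorphically to $\bar\Sigma^2$, so it then extends to a holomorphic $1$-form on all of $\bar\Sigma^2$.

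Next consider $g_*=f^1+if^2=\int\omega$. By the period condition, $\oint_\gamma\omega=0$ for every closed curve $\gamma$ in $\Sigma^2$, so $g_*$ is a single-valued holomorphic function on $\Sigma^2$. Near each end $p_j$, the form $\omega$ is holomorphic with zero residue (Lemma~\ref{lem_residue_omega}). Hence $g_*$ is bounded near $p_j$ and extends holomorphically across it by Riemann's removable singularity theorem.

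This makes $g_*$ a holomorphic function on the compact Riemann surface $\bar\Sigma^2$, so it is constant. (Alternatively: $\omega$ is a holomorphic form on $\bar\Sigma^2$ with all periods zero on the dense subset $\Sigma^2$, so all its periods vanish and $\omega\equiv 0$.) Then $\omega=dg_*\equiv 0$, so $f^1$ is constant. This contradicts Definition~\ref{def_ZMC_face}, which requires $F^1$ to be non-constant, or equivalently the fact that $\omega\not\equiv 0$ (Lemma~\ref{lem_compatibility_condtion}).

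The step needing the most care is passing from ``periods vanish on $\Sigma^2$'' to ``$g_*$ is a global holomorphic function on $\bar\Sigma^2$''. Routing the argument through the single-valuedness of $g_*$ on $\Sigma^2$ and the removable singularities at the shrinking ends avoids any separate discussion of homology classes of $\bar\Sigma^2$.

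The case $n=0$ is included in the same argument: then $\Sigma^2=\bar\Sigma^2$ is compact, and $g_*$ is already a holomorphic function on a compact surface, hence constant.
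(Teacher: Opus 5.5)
Your proof is correct and follows the same route as the paper: if there are no expanding ends, $\omega$ is holomorphic at every end. Then $g_*=\int\omega$, which is single-valued by the period condition, extends to a holomorphic function on the compact surface $\bar\Sigma^2$, so $\omega=dg_*\equiv 0$, a contradiction; you simply spell out the single-valuedness and removable-singularity steps that the paper leaves implicit.
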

\begin{proof}
  Let $(g,\omega)$ be the Weierstrass data of a weakly complete and finite-type ZMC-face $f$.
  Assume that $f$ has no expanding ends.
  Then, $\omega$ is holomorphic at each end, so the dual Gauss map $g_*$ is a holomorphic function on a compact Riemann surface $\bar\Sigma^2$.
  Therefore, $g_*$ is constant and $\omega\equiv 0$, which yield a contradiction to the compatibility condition of $(g,\omega)$.
\end{proof}

Next, we give the following estimate for the number of omitted values of the Gauss map $g$.

\begin{lemma}[\cite{Kawakami}]\label{lem_Kawakami}
  Let $f\col\Sigma^2\to\I^3$ be a weakly complete and finite-type ZMC-face, and let $g\col\Sigma^2\to\C\cup\{\infty\}$ be the Gauss map of $f$.
  We assume that $g$ is non-constant and denote by $D_g$ the number of omitted values of $g$.
  Then, we have
  \[D_g\le 3-\frac{k}{\deg(g)},\]
  where $k$ is the number of expanding ends of $f$.
\end{lemma}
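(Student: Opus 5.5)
The bound will come from a Nevanlinna-type counting argument in the style of Fujimoto and Kawakami. The lift-metric $ds^2_{\!\#}=(2+|g|^2)|\omega|^2$ is complete on $\Sigma^2=\bar\Sigma^2\setminus\{p_1,\dots,p_n\}$, and $g$ extends meromorphically to $\bar\Sigma^2$ by Proposition~\ref{prop_Huber_Osserman}. Let $\alpha_1,\dots,\alpha_{D}$ be the omitted values of $g$ on $\Sigma^2$, with $D=D_g$. Each $\alpha_i$ then has all of its preimages under $g\colon\bar\Sigma^2\to\C\cup\{\infty\}$ among the ends. I will bound the number of such preimages and compare it with $\deg(g)$ via Riemann--Hurwitz.

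The plan is to count, for each $i$, the totally ramified deficiency. Since $g^{-1}(\alpha_i)\subseteq\{p_1,\dots,p_n\}$, we get
\[
D\deg(g)-\sum_{i=1}^{D}\#g^{-1}(\alpha_i)=\sum_{i=1}^{D}\sum_{p\in g^{-1}(\alpha_i)}(\ord_p(g-\alpha_i)-1)\le 2\deg(g)-2+2\gamma,
\]
where $\gamma$ is the genus and the right-hand side is the total ramification of $g$. As the preimage sets are disjoint subsets of the ends, $\sum_i\#g^{-1}(\alpha_i)\le n$. This gives
\[
D\deg(g)\le 2\deg(g)+n-\chi(\bar\Sigma^2).
\]

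Now I use the third Osserman-type inequality, Theorem~\ref{thm_third}, which gives $n-\chi(\bar\Sigma^2)\le\deg(g)-k$. Substituting yields $D\deg(g)\le 3\deg(g)-k$, and dividing by $\deg(g)>0$ gives $D_g\le 3-k/\deg(g)$.

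The point needing care is the inclusion $g^{-1}(\alpha_i)\subseteq\{p_1,\dots,p_n\}$ when $\alpha_i=\infty$. Poles of $g$ at singular points $q_j$ mean $\infty$ is attained on $\Sigma^2$, so it cannot be omitted; this is automatic from the definition of omitted value. I must also make sure the ramification count holds at every point of $\bar\Sigma^2$, not just on $\Sigma^2$, which the Riemann--Hurwitz formula for the meromorphic extension guarantees.

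The main obstacle is therefore only bookkeeping, since the paper cites \cite{Kawakami} for this lemma. If one instead wanted a proof valid without finite-type, one would need Fujimoto's metric-modification technique, but under finite-type the algebraic argument above suffices. Lemma~\ref{lem_at_least_one_expanding_ends} shows $k\ge1$, so the bound is strictly less than $3$.
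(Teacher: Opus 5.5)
Your proof is correct and is essentially the paper's argument. You bound the ramification of $g$ over the omitted values, whose preimages lie among the $n$ ends, by the Riemann--Hurwitz total $2\deg(g)-\chi(\bar\Sigma^2)$, and then substitute the third Osserman-type inequality $n-\chi(\bar\Sigma^2)\le\deg(g)-k$.
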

\begin{proof}
  The proof of this lemma is due to a suggestion of Yu Kawakami.
  We use an argument similar to that in Kawakami-Kobayashi-Miyaoka~\cite{KKM2008}.
  
  Let $\{\alpha_1,\dots,\alpha_{D_g}\}\subseteq\C\cup\{\infty\}$ be the set of omitted values of $g$.
  We define $b_0$ as the sum of the branching orders of $g$ at these omitted values.
  Then, we have
  \[b_0\ge D_g\deg(g)-n.\]
  Here, we denote by $b_g$ the total branching order of $g$.
  Applying the Riemann-Hurwitz theorem to $g$ on $\bar\Sigma^2$, we have
  \[b_g=2\deg(g)-\chi(\bar\Sigma^2).\]
  Since $b_0\le b_g$ holds, we get
  \[D_g\le\frac{b_0+n}{\deg(g)}\le\frac{b_g+n}{\deg(g)}=2+\frac{-\chi(\bar\Sigma^2)+n}{\deg(g)}.\]
  By using the third Osserman-type inequality \eqref{eq_third_Osserman_type_inequality_intro}, we obtain
  \[D_g\le 2+\frac{\deg(g)-k}{\deg(g)}=3-\frac{k}{\deg(g)}.\]
\end{proof}

Consequently, we obtain the following theorem.

\begin{theorem}\label{thm_Dg}
  The maximal number of omitted values of Gauss maps for weakly complete and finite-type ZMC-faces, excluding a plane, is $2$.
\end{theorem}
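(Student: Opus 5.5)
The statement has two halves: an upper bound and an example showing the bound is attained.

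\emph{Upper bound.} Let $f$ be weakly complete and finite-type, not a plane, with Gauss map $g$. If $g$ is constant then $dg=0$, so the second fundamental form $h=\frac12\omega dg+\frac12\bar\omega d\bar g$ of Proposition~\ref{prop_first_second_fundamental_form_ZMC_face} vanishes identically. Together with the compatibility condition (no singular points, hence $g$ finite), this means $f$ is a piece of a plane up to isometry. So we may assume $g$ is non-constant. Then Lemma~\ref{lem_Kawakami} gives
\[
D_g\le 3-\frac{k}{\deg(g)}.
\]
Lemma~\ref{lem_at_least_one_expanding_ends} gives $k\ge 1$, and $\deg(g)$ is a finite positive integer. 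Hence $D_g<3$, and since $D_g$ is an integer, $D_g\le 2$.

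One point needs care: the justification that constant $g$ forces a plane. With $g\equiv c$, the formula gives $f^0=\Re\bigl(c\int\omega\bigr)=\Re(c\,g_*)$. This is a linear function of $(f^1,f^2)$, so the image of $f$ lies in a non-vertical plane, which is congruent to \eqref{eq_plane} by an isometry \eqref{eq_isometry} with suitable $a,b$.

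\emph{Sharpness.} I need an example whose Gauss map omits exactly two values. The catenoid of Example~\ref{exam_catenoid} works. It has Weierstrass data $(g,\omega)=(1/z,dz)$ on $\C\setminus\{0\}=\hat\C\setminus\{0,\infty\}$, and $g=1/z$ omits $0$ and $\infty$ there. I would check the remaining hypotheses:
\begin{itemize}
\item $\omega$ has no zeros, so the compatibility condition holds.
\item $\oint dz=0$ and $\oint dz/z=2\pi i$ has zero real part, so the period condition holds.
\item The lift-metric $(2+|z|^{-2})|dz|^2$ is complete at $0$ and at $\infty$, so $f$ is weakly complete.
\item $g$ is rational, so $f$ is finite-type.
\end{itemize}

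The catenoid is not a plane, and its Gauss map omits exactly $2$ values. This attains the bound and proves the theorem. The only delicate step is the reduction from constant $g$ to a plane; everything else follows directly from the two quoted lemmas.
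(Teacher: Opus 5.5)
Your proof is correct and follows the paper's own argument. You combine Lemma~\ref{lem_Kawakami} with Lemma~\ref{lem_at_least_one_expanding_ends} to get $D_g<3$, hence $D_g\le 2$, and you use the catenoid of Example~\ref{exam_catenoid} to show the bound is sharp. Your explicit reduction of the constant-$g$ case to a plane is needed, since Lemma~\ref{lem_Kawakami} assumes $g$ is non-constant; the paper leaves this implicit in the phrase ``excluding a plane''.
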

\begin{proof}
  By Lemmas~\ref{lem_at_least_one_expanding_ends} and \ref{lem_Kawakami}, we have $D_g\le 2$.
  Since the Gauss map of a catenoid in $\I^3$ (cf. Example~\ref{exam_catenoid}) has two omitted values, this bound is sharp.
\end{proof}

\begin{remark}
  If $f$ is a finite-type zero mean curvature surface represented as an entire graph, excluding a plane, the Gauss map $g$ is a polynomial by Proposition~\ref{prop_finite_type_entire_ZMC}.
  Therefore, in this case, $g$ has exactly one omitted value $\infty$.
\end{remark}

\section{Examples}\label{sec_8}

In this section, we present some examples of ZMC-faces that attain equalities in the Osserman-type inequalities \eqref{eq_first_Osserman_type_inequality_intro}, \eqref{eq_second_Osserman_type_inequality_intro} or \eqref{eq_third_Osserman_type_inequality_intro}.
First, we give examples attaining equality in the first Osserman-type inequality \eqref{eq_first_Osserman_type_inequality_intro}.

\begin{example}[Enneper surface of order $m$]\label{exam_Enneper_surfaces}
  Let $\Sigma^2:=\C$ and $(g,\omega):=(z^{m-1},dz)$ for a positive integer $m\ge 2$.
  Then, the corresponding ZMC-face $f$ has an expanding embedded end at $z=\infty$.
  Note that this surface is given in Akamine-Lee-Yang~\cite[Example 3.7]{ALYpreprint} and is called the \textit{Enneper surface of order $m$}.
\end{example}
\begin{example}[Inverse Enneper surface of order $m$]\label{exam_inverse_Enneper_surfaces}
  Let $\Sigma^2:=\C\setminus\{0\}$ and $(g,\omega):=(z^{-m-1},dz)$ for a positive integer $m\ge 1$.
  Then, the corresponding ZMC-face $f$ has an expanding embedded end at $z=\infty$ and a shrinking embedded end at $z=0$ (see Figure~\ref{fig_first_Osserman_ineq}).
  Although the figure seems to exhibit multiple shrinking ends, note that there is in fact only one shrinking end.
  We call this surface \textit{inverse Enneper surface of order $m$}.
  These surfaces are related to Example~\ref{exam_Enneper_surfaces} by inversion.
  More precisely, let us consider the inversion with respect to the $t$-axis, namely the bijection
  \[\I^3\setminus L\ni(t,x,y)\mapsto\left(t,\frac{x}{x^2+y^2},\frac{y}{x^2+y^2}\right)\in\I^3\setminus L,\]
  where $L:=\{(t,x,y)\in\I^3\setm x=y=0\}$.
  For $m\ge 2$, this map takes the Enneper surface of order $m$ to the inverse Enneper surface of order $m$.
  Moreover, this map takes the inverse Enneper surface of order 1 to a plane which is not parallel to the $xy$-plane.
  
  Here, let us consider a map $\Phi_m\col\I^3\to\R$ defined by
  \[\Phi_m(t,x,y):=mt(x^2+y^2)^m+\Re\bigl((x+iy)^m\bigr).\]
  Then, the inverse Enneper surface of order $m$ can be represented implicitly as
  \[\Bigl\{(t,x,y)\in\I^3\setminus L\setm\Phi_m(t,x,y)=0\Bigr\}.\]
\end{example}

\begin{figure}[htbp]
  \centering
  \includegraphics[width=5cm]{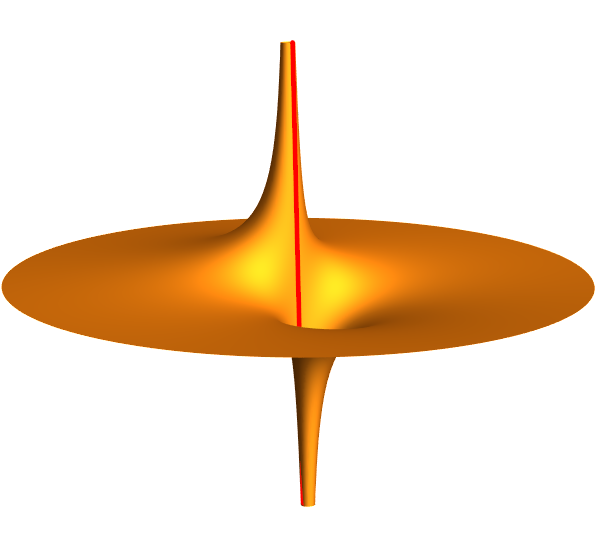}
  \caption{Inverse Enneper surface of order 1. The red line denotes $L$.}
  \label{fig_first_Osserman_ineq}
\end{figure}

Next, we provide examples which satisfy the equality condition of the second Osserman-type inequality \eqref{eq_second_Osserman_type_inequality_intro} or the third Osserman-type inequality \eqref{eq_third_Osserman_type_inequality_intro}.

\begin{example}\label{exam_two_catenoidal_ends}
  Let $\Sigma^2:=\C\setminus\{0\}$ and
  \begin{align}\label{eq_exam_two_catenoidal_ends}
    (g,\omega):=\left(\frac{z}{z^2-1},\frac{z^2-1}{z^2}dz\right).
  \end{align}
  This Weierstrass data gives the weakly complete and finite-type ZMC-face $f\col\Sigma^2\to\I^3$, which has two expanding catenoidal ends at $z=0,\infty$.
  Moreover, $f$ has two singular points, which are cross caps, at $z=\pm 1$.
  Since the dual Weierstrass data of $f$ is given by
  \[(g_*,\omega_*)=\left(\frac{z^2+1}{z},-\frac{z^2+1}{(z^2-1)^2}dz\right),\]
  we have $\deg(g)=\deg(g_*)=2$.
  Therefore, this example attains equalities in the first, second and third Osserman-type inequalities.
  Writing $f=(f^0,f^1,f^2)$, we have
  \[f^0(z)=\log|z|,\]
  and the limit of $f^0$ when $z\to 0$ (resp. $z\to\infty$) is $-\infty$ (resp. $+\infty$).
  Therefore, the ZMC-face $f$ is properly embedded outside a compact subset (see Figure~\ref{fig_intro} (a)).
\end{example}

\begin{example}\label{exam_same_ZMC_1}
  Let $\Sigma^2:=\C\setminus\{0,1\}$ and
  \begin{align}\label{eq_same_Weierstrass_data_1}
    (g,\omega):=\left(\frac{z^2}{(z^2+1)(z-1)},\frac{z^2+1}{z^2}dz\right).
  \end{align}
  This Weierstrass data gives the weakly complete and finite-type ZMC-face, which has a planar end at $z=0$ and an expanding catenoidal end at $z=\infty$ and a shrinking catenoidal end at $z=1$.
  Therefore, the ZMC-face given by the Weierstrass data \eqref{eq_same_Weierstrass_data_1} satisfies the equality conditions of all three Osserman-type inequalities.
  If we write $f=(f^0,f^1,f^2)$, we have
  \[f^0(z)=\log|z-1|,\]
  and the limit of $f^0$ when $z\to 0$ (resp. $z\to\infty$) is $0$ (resp. $+\infty$).
  This implies that the ZMC-face $f$ is properly embedded outside a compact subset (see Figure~\ref{fig_intro} (b)).
\end{example}

\begin{example}\label{exam_same_ZMC_2}
  Let $\Sigma^2:=\C\setminus\{0,1\}$ and
  \begin{align}\label{eq_same_Weierstrass_data_2}
    (g,\omega):=\left(\frac{z^2}{(z^2-1)(z-1)},\frac{z^2-1}{z^2}dz\right).
  \end{align}
  This Weierstrass data gives the weakly complete and finite-type ZMC-face, which has a planar end at $z=0$ and an expanding catenoidal end at $z=\infty$ and a layered shrinking catenoidal end, which is not embedded, at $z=1$.
  Therefore, the ZMC-face given by the Weierstrass data \eqref{eq_same_Weierstrass_data_2} satisfies the equality condition only for the third Osserman-type inequality (see Figure~\ref{fig_intro} (c)).
\end{example}

\begin{remark}
  In Examples~\ref{exam_same_ZMC_1} and \ref{exam_same_ZMC_2}, not only the domain $\Sigma^2$ but also the numbers of ends and expanding ends coincide.
  Moreover, the mapping degrees of the Gauss maps of \eqref{eq_same_Weierstrass_data_1} and \eqref{eq_same_Weierstrass_data_2} coincide, which implies that the total curvatures with respect to the lift-metrics also coincide.
\end{remark}

Using elliptic functions, we can also construct examples for which the genus of $\bar\Sigma^2$ is 1.

\begin{example}\label{exam_genus_1}
  We set $T:=\C/(\Z+\Z i)$ as the square torus and we denote by $\wp(z)$ the Weierstrass $\wp$-function on $T$.
  We set
  \[\omega:=d\left(\frac{\wp'}{\wp}\right)=\frac{\wp''\wp-(\wp')^2}{\wp^2}dz,\qquad g:=\frac{dz}{\omega}\frac{\wp'}{\wp-c},\]
  where $c\in\C$ is a constant, and we define $\Sigma^2:=T\setminus\{\mathrm{the\;poles\;of\;}\omega\mathrm{\;or\;}g\omega\}$.
  Then, we have
  \[\Re\oint_\gamma g\omega=\Re\oint_\gamma\frac{\wp'}{\wp-c}dz=\Re\oint_\gamma d\bigl(\log(\wp-c)\bigr)=0\]
  for any closed curve $\gamma$ on $\Sigma^2$.
  Moreover, we can check that $\omega$ and $g\omega$ have no common zeros on $\Sigma^2$.
  Therefore, $(g,\omega)$ satisfies the compatibility and period conditions and gives a weakly complete and finite-type ZMC-face $f$ defined on $\Sigma^2$.
  
  In the case of $c=0$, the ZMC-face $f$ is defined on $\Sigma^2=T\setminus\{0,(1+i)/2\}$, and $f$ has two expanding catenoidal ends at $z=0,(1+i)/2$ and four cross caps at
  \begin{align}\label{eq_singular_points_genus_1}
    z=\frac{1+i}{4},\frac{1-i}{4},\frac{-1+i}{4},\frac{-1-i}{4}.
  \end{align}
  Since we have
  \[f^0(z)=\log|\wp(z)|,\]
  where $f=(f^0,f^1,f^2)$, the limit of $f^0$ when $z\to 0$ (resp. $z\to (1+i)/2$) is $+\infty$ (resp. $-\infty$).
  Therefore, $f$ is properly embedded outside a compact subset  (see Figure~\ref{fig_intro} (d)).
  
  On the other hand, in the case of $c=\wp(1/4)$ the ZMC-face $f$ is defined on $\Sigma^2=T\setminus\{0,(1+i)/2,-1/4,1/4\}$.
  Then, $f$ has an expanding catenoidal end at $z=0$ and a planar end at $z=(1+i)/2$ and two shrinking catenoidal ends at $z=-1/4,\,1/4$.
  In addition, $f$ has four cross caps at points given by \eqref{eq_singular_points_genus_1}.
  Since we have
  \[f^0(z)=\log|\wp(z)-\wp(1/4)|,\qquad f^1(z)+if^2(z)=\frac{\wp'(z)}{\wp(z)},\]
  the limit of $f^0$ when $z\to 0$ (resp. $z\to (1+i)/2$) is $+\infty$ (resp. $\log|\wp(1/4)|$).
  Moreover, we can check that the limit of $(f^1,f^2)$ as $z\to -1/4$ differs from that as $z\to 1/4$.
  Therefore, even in the case $c=\wp(1/4)$, the ZMC-face $f$ is properly embedded outside a compact subset (see Figure~\ref{fig_intro} (e)).
\end{example}

\begin{remark}
  In this paper, we present several examples with shrinking ends as well as expanding ends.
  By contrast, Kato \cite{Kato} investigates expanding catenoidal ends and obtains a general description of the Weierstrass data for zero mean curvature surfaces with singularities whose ends are all of this type.
  Since this general description allows branch points, namely singular points of corank 2, it covers a broader class than that of ZMC-faces.
  Example~\ref{exam_two_catenoidal_ends} and Example~\ref{exam_genus_1} with $c=0$ belong to the class described in \cite{Kato}.
  As shown above, these two examples are particularly nice, since they are properly embedded outside a compact subset.
\end{remark}

\appendix

\section{A criterion for cross caps of ZMC-faces}\label{app_A}

In this appendix, we prove Proposition~\ref{prop_criterion_cross_cap}.
We use the following criterion on cross caps.

\begin{fact}[Whitney~\cite{Whitney1944}]\label{fact_criterion_cross_cap}
  Let $U$ be a domain in $\R^2$, and let $f\col U\to\R^3$ be a smooth map.
  We fix a singular point $p\in U$ of $f$.
  We denote by $(u,v)$ the standard coordinate of $\R^2$, and assume that $f_v(p)=0$ holds.
  Then, $f$ has a cross cap at $p$ if and only if
  \[\det(f_u,f_{uv},f_{vv})\neq 0\]
  holds at $p$.
\end{fact}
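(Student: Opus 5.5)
We prove both directions directly: necessity by checking the standard cross cap and showing the determinant condition is invariant under the allowed coordinate changes, and sufficiency by reducing $f$ to the normal form $(u,uv,v^2)$ through explicit source and target changes of coordinates.

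\emph{Necessity.} We first check the model. For $f_0(u,v)=(u,uv,v^2)$ we have $f_{0,v}(0,0)=0$ and
\[
f_{0,u}=(1,v,0),\qquad f_{0,uv}=(0,1,0),\qquad f_{0,vv}=(0,0,2),
\]
so the determinant equals $2\neq 0$ at $(0,0)$.

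Next we show that the condition does not depend on the coordinates. Let $\Phi$ be a diffeomorphism of the target. Then
\[
(\Phi\circ f)_{vv}=d\Phi(f_{vv})+d^2\Phi(f_v,f_v),\qquad (\Phi\circ f)_{uv}=d\Phi(f_{uv})+d^2\Phi(f_u,f_v).
\]
Since $f_v(p)=0$, the $d^2\Phi$ terms vanish at $p$. Hence the determinant is multiplied by $\det d\Phi(f(p))\neq 0$.

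For a source change $(u,v)=(u(s,t),v(s,t))$ with $f_t(p)=0$, the vector $\partial_t$ spans $\ker df_p$, so $u_t(p)=0$ and $v_t(p)\neq 0$. By the chain rule, modulo multiples of $f_u$ and terms containing $f_v(p)=0$, we get:
\begin{itemize}
\item $f_s\equiv u_sf_u$;
\item $f_{st}\equiv u_sv_tf_{uv}+v_sv_tf_{vv}$;
\item $f_{tt}\equiv v_t^2f_{vv}$.
\end{itemize}
Therefore the determinant changes by the factor $u_s^2v_t^3\neq 0$.

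Combining these, if $\Phi\circ f=f_0$ in some coordinates, then $\det(f_u,f_{uv},f_{vv})\neq 0$ at $p$ in the given coordinates $(u,v)$.

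\emph{Sufficiency, first reduction.} Since $p$ is singular and $f_v(p)=0$, the vector $f_u(p)$ is nonzero, because otherwise the determinant would vanish. After a linear change and a translation in the target, and then the source change $u\mapsto f^1(u,v)$ (which keeps $\partial_v$ as the kernel direction), we may write
\[
f=(u,a(u,v),b(u,v)),\qquad a_v(0)=b_v(0)=0.
\]
The determinant now equals $a_{uv}b_{vv}-a_{vv}b_{uv}$ at $0$. A target linear change $(Y,Z)\mapsto M(Y,Z)$, with $M$ depending only on the fact that this $2\times 2$ block is invertible, normalizes it so that $a_{uv}=1$, $a_{vv}=0$, $b_{uv}=0$ and $b_{vv}=2$ at $0$.

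\emph{Sufficiency, normalizing $b$.} Since $b_{vv}(0)\neq 0$, the implicit function theorem gives a critical curve $v=c(u)$ of $v\mapsto b(u,v)$. The Morse lemma with parameters then yields a new source coordinate $\tilde v(u,v)$, with $\tilde v_v(0)\neq 0$, such that
\[
b=\beta(u)+\tilde v^2.
\]
Applying the target change $Z\mapsto Z-\beta(X)$, we reach $f=(u,a,v^2)$, renaming $\tilde v$ as $v$. By the invariance shown above, the determinant condition still holds in this normalized form.

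\emph{Sufficiency, normalizing $a$.} By Whitney's even-function lemma, we write
\[
a(u,v)=A(u,v^2)+v\,B(u,v^2)
\]
with $A$ and $B$ smooth. The condition $a_v(0)=0$ gives $B(0,0)=0$, and the determinant condition gives $B_u(0,0)\neq 0$. The target change $Y\mapsto Y-A(X,Z)$ then yields
\[
f=(u,\,vB(u,v^2),\,v^2).
\]
Finally, we apply the target diffeomorphism $(X,Y,Z)\mapsto (B(X,Z),Y,Z)$, which is a local diffeomorphism because $B_X(0,0)\neq 0$. Together with the source change $(u,v)\mapsto (u',v)$ with $u'=B(u,v^2)$, this produces $(u',u'v,v^2)$, which is the cross cap.

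The main obstacle is the smooth normal-form step: the parametric Morse lemma and, above all, Whitney's lemma that a smooth function can be decomposed as $A(u,v^2)+vB(u,v^2)$ with $A$ and $B$ smooth. This requires the theorem that smooth even functions are smooth functions of $v^2$. I would cite that theorem as a known result rather than reprove it here.
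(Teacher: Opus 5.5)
Your proof is correct, and there is nothing in the paper to compare it against: the paper does not prove this statement. It quotes it as a classical fact from Whitney and uses it as a black box in the proof of Proposition~\ref{prop_criterion_cross_cap}.

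What you supply is a self-contained version of the standard normal-form argument.

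\textbf{Necessity.} You compute the model determinant ($=2$) and show by the chain rule how $\det(f_u,f_{uv},f_{vv})(p)$ transforms. Under a target diffeomorphism it is multiplied by $\det d\Phi$. Under a source change whose second coordinate vector lies in $\ker df_p$ it is multiplied by $u_s^2v_t^3$. I checked this factor: it uses exactly $f_v(p)=0$ and $u_t(p)=0$, and the latter follows from $\ker df_p$ being one-dimensional.

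\textbf{Sufficiency.} Each step of your chain of reductions is valid:
\begin{enumerate}
\item to $(u,a,b)$;
\item via the parametric Morse lemma to $(u,a,v^2)$, where your normalization $b_{vv}(0)=2>0$ correctly secures the sign of $\tilde v^2$;
\item via $a=A(u,v^2)+vB(u,v^2)$ to $(u,vB(u,v^2),v^2)$, with $B(0,0)=a_v(0)=0$ and $B_u(0,0)=a_{uv}(0)\neq 0$; here the determinant of $(u,a,v^2)$ is $2a_{uv}$;
\item finally to $(u',u'v,v^2)$.
\end{enumerate}

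\textbf{What each treatment buys.} Citing the result, as the paper does, is economical and appropriate for a well-known criterion. Your argument shows that the necessity direction is an elementary chain-rule computation. It also makes explicit that the coordinate-independence of the criterion is automatic. The only non-elementary input is Whitney's theorem on smooth even functions (with parameters), which you rightly cite rather than reprove.
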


\begin{proof}[Proof of Proposition~\ref{prop_criterion_cross_cap}]
  Let $(U,z)$ be a complex coordinate of $\Sigma^2$ centered at a singular point $p$.
  We define two vector fields $\xi$ and $\eta$ defined on $U$ as
  \[\xi:=\frac{1}{g\hat\omega}\frac{\partial}{\partial z}+\overline{\left(\frac{1}{g\hat\omega}\right)}\frac{\partial}{\partial\bar z},\qquad\eta:=\frac{i}{g\hat\omega}\frac{\partial}{\partial z}+\overline{\left(\frac{i}{g\hat\omega}\right)}\frac{\partial}{\partial\bar z}.\]
  By a direct calculation, we obtain
  \[\xi f=\bigl(1,\Re(1/g),\Im(1/g)\bigr),\qquad\eta f=\bigl(0,-\Im(1/g),\Re(1/g)\bigr).\]
  Since $g$ has a pole at $p$, we have $(\eta f)(p)=0$.
  Moreover, we can calculate that
  \begin{align*}
    \xi\eta f&=\left(0,\Im\left(\frac{g_z}{g^3\hat\omega}\right),-\Re\left(\frac{g_z}{g^3\hat\omega}\right)\right),\\
    \eta\eta f&=\left(0,\Re\left(\frac{g_z}{g^3\hat\omega}\right),\Im\left(\frac{g_z}{g^3\hat\omega}\right)\right).
  \end{align*}
  Therefore, we have
  \[\Delta:=\det\bigl(\xi f,\xi\eta f,\eta\eta f\bigr)=\left|\frac{g_z}{g^3\hat\omega}\right|^2=\left|\frac{d(1/g)}{g\omega}\right|^2.\]
  Since $g\omega$ does not have zero at $p$ by the compatibility condition, $\Delta$ has non-zero value at $p$ if and only if $\ord_p(g)=-1$ holds, that is, $\ord_p(\omega)=1$ holds.
  Therefore, by Fact~\ref{fact_criterion_cross_cap} the assertion follows.
\end{proof}

\bibliographystyle{amsplain}
\bibliography{reference}

\end{document}